\newtheorem{theorem}{Theorem}[section]
\newtheorem{lemma}[theorem]{Lemma}
\newtheorem{corollary}[theorem]{Corollary}
\newtheorem{proposition}[theorem]{Proposition}
\newtheorem{sublemma}{}[theorem]
\newtheorem{conjecture}[theorem]{Conjecture}
\theoremstyle{definition}
\theoremstyle{remark}
\numberwithin{equation}{section}
\newcommand{\ba}{\backslash}
\newcommand{\si}{{\rm si}}
\newcommand{\cl}{{\rm cl}}
\newcommand{\subproof}{\begin{proof}[Subproof]}
\newcommand{\lm}{\lambda}
\newcommand{\cT}{\mathcal{T}}
\newcommand{\cS}{\mathcal{S}}
\newcommand{\cW}{\mathcal{W}}
\DeclareMathOperator{\breadth}{breadth}
\DeclareMathOperator{\guts}{guts}
\DeclareMathOperator{\coguts}{coguts}
\DeclareMathOperator{\inter}{int}
\begin{document}

\sloppy

\title{What is a $4$-connected matroid?}

\author[N.\ Brettell]{Nick Brettell}
\address{School of Mathematics and Statistics, Victoria University of Wellington, New Zealand}
\email{nick.brettell@vuw.ac.nz}
\author[S.\ Jowett]{Susan Jowett}
\address{School of Mathematics and Statistics, Victoria University of Wellington, New Zealand}
\email{swoppit@gmail.com}
\author[J.\ Oxley]{James Oxley}
\address{Department of Mathematics, Louisiana State University, Baton Rouge, Louisiana, USA}
\email{oxley@math.lsu.edu}
\author[C.\ Semple]{Charles Semple}
\address{School of Mathematics and Statistics, University of Canterbury, New Zealand}
\email{charles.semple@canterbury.ac.nz}
\author[G.\ Whittle]{Geoff Whittle}
\address{School of Mathematics and Statistics, Victoria University of Wellington, New Zealand}
\email{geoffwhittle201@gmail.com}

\subjclass[2010]{05B35}
\date{\today}
\thanks{The first, fourth, and fifth authors were supported by the New Zealand Marsden Fund.}

\begin{abstract}
The {\em breadth} of a tangle $\cT$ in a matroid is the size of the largest spanning uniform
submatroid of the tangle matroid of $\cT$. A matroid $M$
is {\em weakly $4$-connected}
if it is 3-connected and whenever $(X,Y)$ is a partition of $E(M)$ with $|X|,|Y|>4$, then $\lm(X)\geq 3$.
We prove that if $\cT$ is a tangle of order $k\geq 4$ and breadth $l$
in a matroid $M$, then $M$ has a 
weakly 4-connected minor $N$ with a tangle $\cT_N$ of order $k$, breadth $l$ and has the property
that $\cT$ is the tangle in $M$ induced by $\cT_N$. A set $Z$ of elements of a matroid $M$ is $4$-{\em connected} if 
$\lm(A)\geq\min\{|A\cap Z|,|Z-A|,3\}$ for all $A\subseteq E(M)$. As a corollary of our theorems
on tangles 
we prove that if $M$ contains an $n$-element $4$-connected set where $n\geq 7$, then $M$ 
has a weakly
$4$-connected minor that contains an $n$-element $4$-connected set. \end{abstract}

\maketitle

\section{Introduction} 

This introduction assumes some familiarity with matroid tangles. Definitions and basic properties
are given in Section~\ref{tangles}.

Tangles were introduced by Robertson and Seymour \cite{RS91} to capture highly connected regions of 
a graph and they noted \cite[p.190]{RS91} that tangles extend  to matroids. 
A  tangle of order $k$
in a matroid can be thought of as capturing a ``$k$-connected region'' of the matroid. 
A matroid with such a tangle may bear little relation to a matroid that is ``$k$-connected''
in some more concrete sense, but it is natural to expect that a tangle of order $k$ guarantees a minor
that possesses a more concrete connectivity property. 

For $k\in\{2,3\}$ the relationship between $k$-tangles and existing notions of 
connectivity is clear. A tangle of order 2 in a matroid corresponds to a 
connected component of the matroid with at least two elements. A tangle of order 3 corresponds to a 
3-connected part of the canonical 2-sum decomposition of the matroid.
We discuss the precise connection in Section~\ref{tangles}, but the point is that tangles of order
2 and 3 identify 2- and 3-connected minors that correspond to the 2- or 3-connected ``region'' captured by
 the tangle.
 
This paper considers the case $k=4$. If a tangle of order 4 identifies a 4-connected ``region'' of a matroid,
then one would expect to find a minor associated with the tangle that satisfies a more tangible form of 
4-connectivity. 
Various notions of 4-connectivity have been considered in the literature.
It turns out that ``weak 4-connectivity'' is the 
connectivity that we can guarantee in a minor of a matroid with a  4-tangle.
A matroid $M$
is {\em weakly $4$-connected}
if it is 3-connected and whenever $(X,Y)$ is a partition of $E(M)$ with $|X|,|Y|>4$, then $\lm(X)\geq 3$. 
We prove that a matroid $M$ with a tangle $\cT$ of order $k\geq 4$ has a
weakly $4$-connected minor $N$ with a tangle $\cT_N$ of order $k$.

All well and good, but we need more for a satisfactory answer. We need guarantees 
that the information in $\cT$ is not significantly eroded in $\cT_N$.
To obtain that guarantee we would like to have the property that the ``size'' of $\cT$ is  
preserved in $\cT_N$. It is also desirable 
that $\cT$ and $\cT_N$ are related in a meaningful way. 

To deal with the issue of size we define the {\em breadth}
of a tangle $\cT$ to be the number of elements in a largest spanning uniform submatroid of
the tangle matroid $M_\cT$. This generalises cardinality in the sense that, if $M$ is Tutte 
$k$-connected, then the breadth of its unique tangle of order $k$ is $|E(M)|$. 

We relate the structure of $\cT_N$ to that of $\cT$ as follows.
Recall that a tangle
is a collection $\cT$ of subsets of $E(M)$ that act as pointers to our $k$-connected region.
The $k$-tangle $\cT$ {\em generates} a tangle $\cT_N$ in the minor $N$ if $\cT_N$ is the 
unique tangle of order $k$ in $N$ that contains the collection of intersections of the
members of $\cT$ with $E(N)$. 

With these two notions in hand we can state our main theorem.

\begin{theorem}
\label{biggy}
Let $\cT$ be a tangle of order $k\geq 4$ in a matroid $M$. Then $M$ has a weakly
$4$-connected minor $N$ with a tangle $\cT'$ of order $k$ such that 
$\cT$ generates $\cT'$ in $N$ and such that the breadth of $\cT'$ is equal to that of $\cT$.
\end{theorem}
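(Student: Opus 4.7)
The natural strategy is a minor-minimality argument. Specifically, I would choose $N$ to be a minor of $M$ that is minor-minimal subject to two conditions: first, that $\cT$ generates a tangle $\cT'$ of order $k$ in $N$; and second, that the breadth of $\cT'$ equals the breadth of $\cT$. Such a minor exists because $M$ itself satisfies both conditions with $\cT' = \cT$. It then suffices to show that any such minor-minimal $N$ is weakly $4$-connected, and the rest of the proof is carried out by assuming a failure of weak $4$-connectivity in $N$ and using it to locate an element whose removal preserves conditions (i) and (ii), contradicting minimality.

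The verification of weak $4$-connectivity breaks naturally into two parts. First I would show that $N$ is $3$-connected. If $N$ has a $j$-separation $(X,Y)$ with $j \in \{1,2\}$, then because $k \geq 4$ and $\lm(X) = j-1 < k-1$, exactly one of $X$ and $Y$ belongs to $\cT'$; without loss of generality, say $X \in \cT'$. Since the tangle ``lives on the $X$ side,'' one expects an appropriate removal (deletion or contraction, chosen to match the role of the element) of any element of $Y$ to yield a smaller minor still inducing $\cT$ and preserving the breadth. Second, assuming $N$ is $3$-connected and has a partition $(X,Y)$ of $E(N)$ with $\lm(X) = 2$ and $|X|, |Y| > 4$, the condition $|X|,|Y| \geq 5 \geq k-1$ again forces exactly one side, say $X$, to lie in $\cT'$. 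With $|Y| \geq 5$, there should be enough room in $Y$ to find an element $e$ whose removal from $N$ preserves both the tangle of order $k$ and the breadth, contradicting minimality once more.

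The main obstacle in both steps is ensuring that removing an element of $Y$ preserves the breadth of the tangle. The breadth is defined as the size of a largest spanning uniform submatroid of the tangle matroid $M_{\cT'}$, and this matroid changes when we pass to a minor of $N$. I anticipate that the heart of the argument is a case analysis on how elements of $Y$ interact with a fixed spanning uniform submatroid $U$ witnessing the breadth of $\cT'$, establishing that in every configuration some element of $Y$ can be removed from $N$ so that $U$ persists---possibly after modification to a different spanning uniform submatroid of the same size---inside the tangle matroid of the induced tangle on the smaller minor. This is almost certainly where the bulk of the technical work lies, and I would expect it to require auxiliary lemmas describing how spanning uniform submatroids of tangle matroids behave under deletion and contraction of elements from the ``non-tangle side'' of a low-order separation, together with a careful bookkeeping of which single-element removals simultaneously respect the tangle-generation property and the witness $U$.
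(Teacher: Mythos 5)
Your overall architecture matches the paper's: take a minor-minimal $N$ in which $\cT$ generates an order-$k$ tangle of the same breadth (the paper calls such a tangle \emph{breadth-critical}) and show $N$ must be weakly $4$-connected by exhibiting, for any violating separation, a single element whose removal preserves both generation and breadth. The gap is in the one step that carries all the weight: you have the direction of removal backwards. Under the paper's convention a member of $\cT'$ is a \emph{small} side, so when you write ``$X \in \cT'$,'' then say the tangle lives on the $X$ side, and propose removing an element of $Y$, you are in fact removing an element from the $\cT'$-large side --- the region the tangle identifies, and where essentially all of any breadth witness $U$ sits (a witness meets a rank-$2$ flat of the tangle matroid in at most two elements, so it is concentrated on the large side). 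There is no reason such a removal preserves breadth, and your claim that ``any element of $Y$'' works is false even after the sides are corrected: the element must be chosen carefully.

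The mechanism the paper actually uses, and which your sketch does not supply, is this: the small side $X$ of the offending $3$-separation spans a rank-$2$ flat $F$ of $M_{\cT}$ with $|F|\ge 5$; $F$ is fully closed in $M$, and one removes an element $a\in F$ chosen so that (a) $F-\{a\}$ is \emph{titanic} in the resulting minor, which is what guarantees that $\cT$ generates a tangle there at all (Lemma~\ref{tit-tangle}), and (b) either $F$ has large guts (Lemma~\ref{guts-away}) or $F-\{a\}$ retains at least two interior elements (Lemmas~\ref{interior} and~\ref{keep-int}); interior elements are freely placed on $F$ in $M_{\cT}$, which is exactly what allows the witness $U$ to be modified so that it survives into the tangle matroid of the minor. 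The $3$-connectivity step is comparatively easy (loops and parallel classes of $M_{\cT}$, plus Tutte's theorem to decide between deletion and contraction), but there too the element comes from the small side. Without identifying this flat/titanic/interior machinery --- or some substitute for it --- the breadth-preservation claim at the heart of your argument is unsupported.
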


The results of this paper also have a connection with $k$-connected sets.
Let $M$ be a matroid and $k\geq 2$ be an integer. A subset $Z$ of $E(M)$ is
$k$-{\em connected} if $\lm(A)\geq\min\{|A\cap Z|,|Z-A|,k-1\}$ for all $A\subseteq E$.  
We show that if $Z$ is a $k$-connected set of size at least $3k-5$, then there is a tangle $\cT$ of order $k$
in $M$ such that $M_\cT|Z\cong U_{k-1,|Z|}$. We  note that the relationship between $k$-connected sets and uniform submatroids of tangle matroids is further motivation for our definition of breadth.
Via this connection we obtain the following theorem.

\begin{theorem}
\label{get-weak}
Let $Z$ be an $n$-element $k$-connected set in the matroid $M$ where $n\geq 3k-5$ and $k\geq 4$. 
Then $M$ has a weakly $4$-connected minor $N$ that contains an $n$-element 
$k$-connected set.
\end{theorem}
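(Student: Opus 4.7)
The plan is to combine Theorem~\ref{biggy} with the correspondence, signalled in the introduction, between $k$-connected sets and spanning uniform submatroids of tangle matroids. The argument is in three steps.

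First, from the hypothesis that $Z$ is a $k$-connected set of size $n \geq 3k-5$, I would invoke the result stated just before Theorem~\ref{get-weak} in the introduction to obtain a tangle $\cT$ of order $k$ in $M$ with $M_{\cT}|Z \cong U_{k-1,n}$. Since $M_{\cT}$ has rank $k-1$ and $n \geq 3k-5 \geq k-1$, this copy of $U_{k-1,n}$ spans $M_{\cT}$, so the breadth of $\cT$ is at least $n$.

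Second, I would apply Theorem~\ref{biggy} to $\cT$. This yields a weakly $4$-connected minor $N$ of $M$ and a tangle $\cT'$ of order $k$ in $N$ such that $\cT$ generates $\cT'$ and the breadth of $\cT'$ equals that of $\cT$, and hence is at least $n$. Consequently $M_{\cT'}$ contains a spanning uniform submatroid on some set $W \subseteq E(N)$ with $|W| \geq n$.

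Third, I would convert $W$ back into a $k$-connected set of $N$. The required converse of the first step is that a spanning copy of $U_{k-1,|W|}$ in $M_{\cT'}$ forces $W$ to be a $k$-connected set in $N$. Once this is in hand, any $n$-element subset $Z' \subseteq W$ is again $k$-connected in $N$, because for every $A \subseteq E(N)$,
$$\min\{|A \cap Z'|, |Z' - A|, k-1\} \leq \min\{|A \cap W|, |W - A|, k-1\} \leq \lm_N(A),$$
so $N$ contains the required $n$-element $k$-connected set.

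The main obstacle is the converse used in the third step: translating the structural information ``every $(k-1)$-subset of $W$ is a basis of $M_{\cT'}$'' into the connectivity inequality $\lm_N(A) \geq \min\{|A \cap W|, |W - A|, k-1\}$ for every $A \subseteq E(N)$. Morally this reverses the passage from a $k$-connected set to a tangle used in the first step, and one expects it to be proved as part of (or immediately alongside) the lemma cited there, using the tangle axioms to forbid the low-connectivity separations that would violate the inequality.
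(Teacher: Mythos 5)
Your proposal is correct and follows essentially the same route as the paper: the paper's proof applies Lemma~\ref{get-k-tangle} to obtain a tangle of order $k$ and breadth at least $n$, then Theorem~\ref{biggy} (via Theorem~\ref{breadth-crit}) to reach a weakly $4$-connected minor preserving breadth, and finally Lemma~\ref{get-k-con} — exactly the converse you flag as the ``main obstacle,'' which is indeed stated and proved alongside Lemma~\ref{get-k-tangle} in Section~3 — to recover a $k$-connected set from the spanning uniform restriction of the tangle matroid. Your extra observation that any $n$-element subset of a $k$-connected set is again $k$-connected is a correct (and harmless) refinement the paper leaves implicit.
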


We now discuss the reasons for our interest in this problem.
The unavoidable minors of large 3-connected matroids are studied in \cite{doov}. 
It is natural to seek analogous results for 4-connected matroids and we are currently engaged
in a project with that as a goal. But while there is general agreement as to what a 
3-connected matroid is, 4-connectivity is somewhat more vexed. Tutte 4-connectivity
is a stringent condition that   fails, for example, for all projective geometries of rank at least three. 
In practise,  various weaker notions
of 4-connectivity have been considered; these include vertical 4-connectivity, cyclic 4-connectivity,
internal 4-connectivity, sequential 4-connectivity and weak 4-connectivity. Which connectivity
should we begin with in our search for unavoidable minors? It is surely a truism in
mathematics that one should select the weakest hypotheses for which one expects the theorem
to hold. With that in mind, our weakest beginning in our search for unavoidable minors
is to start with a matroid with a ``large'' tangle
of order 4 and our initial goal is to find the strongest version of 4-connectivity that such a tangle 
guarantees in a minor. 

The results of this paper imply that the unavoidable minors for a large weakly 4-connected matroid 
are also the unavoidable minors for a matroid with a ``large''  tangle of order 4. We lose no generality 
in focussing on the weakly 4-connected case.

The paper is structured as follows. Section~2 deals with technical preliminaries. Section~3 introduces
tangles and the tangle matroid. We give the formal definition of breadth and
discuss the relationship between $k$-connected sets and breadth.
Section~4 considers tangles generated in minors. In Section~5 we find sufficient conditions to be 
able to move to a proper minor without damaging the breadth of a given tangle. Section~6 considers 
the structure of flats in tangle matroids of low rank. Finally, in Section~7 we are able to prove that,
given a tangle of order at least 4 in a matroid $M$, we can move to a weakly 4-connected minor
with an associated tangle whose breadth is equal to that of the original tangle.
Section~8 discusses the special case of tangles of order exactly 4. In Section~9 we give an
example to show that our main results are, in a sense, best possible. In Section~10 we give
the short proof of Theorem~\ref{get-weak}. Finally
we consider some open problems and conjectures in Section~11.

We were in the final stages of writing this paper when we became aware of a recent paper
of Carmesin and Kurkofka \cite{carm}. They study essentially the same problem for 
4-tangles in graphs as we do for matroids. 
Their outcome is to find an internally 4-connected minor. This is a stronger property than
weak 4-connectivity. Examples are given in Section~\ref{better} that show that 
we cannot improve on weak 4-connectivity, even for the highly structured class of graphic matroids. 
This is due to our requirement of 
preserving breadth.

\section{Preliminaries}


We follow Oxley~\cite{O11} for any unexplained matroid terminology or notation. 
Note that when we refer to a {\it partition} of a set, we do not require that each subset 
in the partition is nonempty. 
For a matroid $M$,  the {\em connectivity function} $\lm_M$   is   
defined, for all subsets $A$ of $E(M)$, by $\lm_M(A)=r_M(A)+r_M(E(M)-A)-r(M)$. 
We say that the set $A$ and the partition $(A,E(M) - A)$ are $k$-{\em separating} if $\lm(A)<k$; they are {\em exactly $k$-separating}
if $\lm(A)=k-1$. 

The {\em coclosure operator} of $M$, denoted $\cl^*_M$ or just $\cl^*$, is defined, for all subsets $A$ of $E(M)$, 
by $\cl^*(A)=\cl_{M^*}(A)$. A set $A$ is {\em coclosed} if $\cl^*(A)=A$. 
It is easily seen that $x\in\cl^*(A)-A$ if and only if 
$x$ is a coloop of $M|(E(M)-A)$, in particular $A$ is coclosed if and only if $M(E(M)-A)$ has no coloops.
The next result is well known (see, for example, \cite[Proposition 2.1.12]{O11}). 
When we say {\it by orthogonality}, we shall mean by an application of this lemma.

\begin{lemma}
\label{coclos}
Let $M$ be a matroid and $(A,\{x\},B)$ a partition of $E(M)$. Then 
$x\in\cl^*(A)$ if and only if $x\notin \cl(B)$.
\end{lemma}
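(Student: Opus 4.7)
The plan is to reduce the coclosure statement about $x$ and $A$ to a closure statement about $x$ and $B$ by passing through the dual rank formula. Recall that for any matroid $M$ with ground set $E$ and any subset $X \subseteq E$, the dual rank satisfies $r^*(X) = |X| + r(E-X) - r(M)$. Since $\cl^*(A) = \cl_{M^*}(A)$, the condition $x \in \cl^*(A)$ is equivalent to $r^*(A \cup \{x\}) = r^*(A)$.

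First I would apply the dual rank formula to both sides of this equation. Using the partition $(A,\{x\},B)$, the complement of $A \cup \{x\}$ is $B$, and the complement of $A$ is $B \cup \{x\}$. Substituting gives
\begin{equation*}
|A| + 1 + r(B) - r(M) \;=\; |A| + r(B \cup \{x\}) - r(M),
\end{equation*}
which simplifies to $r(B \cup \{x\}) = r(B) + 1$. This is exactly the negation of the statement $x \in \cl(B)$, so the equivalence $x \in \cl^*(A) \iff x \notin \cl(B)$ drops out immediately.

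Since the proof is a one-line manipulation of the dual rank formula, there is essentially no obstacle; the only care required is handling the two cases ($x \in \cl^*(A)$ vs.\ $x \notin \cl^*(A)$) symmetrically via the equivalent rank equality, and noting that the partition hypothesis ensures $x \notin A$ so that the question of whether $x$ lies in $\cl^*(A)$ is nontrivial. Alternatively, one could give a more conceptual argument by observing that $x \in \cl^*(A)$ iff $x$ is a coloop of $M \setminus A = M|(B \cup \{x\})$, iff no basis of $M|(B \cup \{x\})$ avoids $x$, iff every basis of $M|B$ is not a basis of $M|(B \cup \{x\})$, iff $x \notin \cl(B)$; but the rank-formula computation is shorter and is the approach I would write down.
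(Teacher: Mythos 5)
Your rank computation is correct: with the partition $(A,\{x\},B)$, the identity $r^*(X)=|X|+r(E(M)-X)-r(M)$ immediately converts $r^*(A\cup\{x\})=r^*(A)$ into $r(B\cup\{x\})=r(B)+1$, which is exactly $x\notin\cl(B)$. The paper gives no proof of this lemma at all, citing it as well known (Oxley, Proposition 2.1.12), and the remark preceding the lemma about coloops of $M|(E(M)-A)$ matches your alternative conceptual argument; so your write-up is a standard and fully adequate substitute.
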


 A set in a matroid is {\em fully closed} if it is both closed and coclosed.
Fully closed sets play an important role in this paper. We make frequent use of the next elementary fact.

\begin{lemma}
\label{keep-fcl}
Let $A$ be a fully closed set in a matroid $M$ and let $N$ be a minor of $M$ whose ground set
contains $E(M)-A$. Then $A\cap E(N)$ is fully closed in $N$. 
\end{lemma}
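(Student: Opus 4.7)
The plan is to write $N$ in standard minor form, check that $A\cap E(N)$ is closed in $N$ directly, and then invoke duality for the coclosed half.

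First I would use the hypothesis $E(M)-A\subseteq E(N)$ to conclude that every contracted or deleted element lies in $A$. Hence I can write $N = M/C\setminus D$ with $C$ and $D$ disjoint subsets of $A$; set $A' := A\cap E(N)$ and observe that $E(N)\setminus A' = E(M)\setminus A$.

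Next I would verify closedness. Pick $y\in E(N)\setminus A'$, so $y\in E(M)\setminus A$, and suppose for contradiction that $y\in\cl_N(A')$. Since deletion does not alter closures inside the remaining ground set, the standard identity for the closure operator of a contraction gives $\cl_{M/C\setminus D}(A') \subseteq \cl_M(A'\cup C)$, so $y\in\cl_M(A'\cup C)$. But $A'\cup C\subseteq A$ and $A$ is closed in $M$, so $y\in\cl_M(A)=A$, contradicting $y\in E(M)\setminus A$. Hence $A'$ is closed in $N$.

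For coclosedness I would simply run the same argument inside $M^*$ rather than repeat the bookkeeping through coloops. The set $A$ is coclosed in $M$, hence closed in $M^*$, and $N^* = (M/C\setminus D)^* = M^*\setminus C/D$ is a minor of $M^*$ whose ground set still contains $E(M^*)-A$. Applying the closedness paragraph with $(M,N)$ replaced by $(M^*,N^*)$ shows $A\cap E(N^*)=A'$ is closed in $N^*$, i.e., coclosed in $N$. The only mild subtlety is checking that the ground-set hypothesis is self-dual and that contraction and deletion swap correctly under $*$; neither is an actual obstacle.
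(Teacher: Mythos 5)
Your proof is correct and rests on the same ingredients as the paper's: the closure identity for minors (to get closedness) together with duality (to get coclosedness). The only difference is presentational — the paper verifies the single-element contraction case and then appeals to duality and induction, whereas you handle a general minor $M/C\backslash D$ in one shot via $\cl_{M/C\backslash D}(A')\subseteq\cl_M(A'\cup C)$ — which is, if anything, slightly cleaner.
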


\begin{proof}
Say that $N=M/x$. By assumption $x\in A$.  By definition $\cl_{M/x}(A-\{x\})=\cl_M(A)\cap E(N)=A-\{x\}$,
so that $A-\{x\}$ is closed in $N$. Also $\cl^*_{M/x}(A-\{x\})=\cl^*_M(A-\{x\})\cap E(N)$.
Since $A$ is coclosed in $M$, we have $\cl^*_M(A-\{x\})\subseteq A$, and it follows that
$A-\{x\}$ is coclosed in $M/x$.  The proof now follows by duality and a routine induction.
\end{proof}

We make free use in proofs of the next result (see, for example,  \cite[Proposition 8.2.14]{O11}).

\begin{lemma}
\label{up-down}
Let $A$ be a set of elements in a matroid $M$. Then the following 
hold for each $x$ in $E(M) - A$.
\begin{itemize}
\item[(i)] $\lm(A\cup\{x\})=\lm(A)-1$ if and only if $x\in\cl(A)$ and $x\in\cl^*(A)$.
\item[(ii)] $\lm(A\cup\{x\})=\lm(A)$ if and only if $x$ belongs to exactly one of
$\cl(A)$ and $\cl^*(A)$.
\item[(iii)] $\lm(A\cup\{x\})=\lm(A)+1$ if and only if $x\notin \cl(A)$ and $x\notin\cl^*(A)$.
\end{itemize}
\end{lemma}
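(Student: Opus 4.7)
The plan is to compute the increment $\lm(A\cup\{x\})-\lm(A)$ directly from the definition, and observe that it decomposes as a difference of two $\{0,1\}$-valued quantities, each controlled by whether $x$ lies in a particular closure.

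Set $B=E(M)-(A\cup\{x\})$, so that $E(M)-A=B\cup\{x\}$. By the definition of the connectivity function,
\[
\lm(A\cup\{x\})-\lm(A)=\bigl[r(A\cup\{x\})-r(A)\bigr]-\bigl[r(B\cup\{x\})-r(B)\bigr].
\]
Each bracketed quantity takes value $0$ or $1$. The first is $0$ precisely when $x\in\cl(A)$ and $1$ otherwise. The second is $0$ precisely when $x\in\cl(B)=\cl(E(M)-A-\{x\})$, and by Lemma~\ref{coclos} this is equivalent to $x\notin\cl^*(A)$; thus the second bracket is $1$ exactly when $x\in\cl^*(A)$.

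Tabulating the four cases according to whether $x$ lies in $\cl(A)$ and in $\cl^*(A)$ gives $-1$, $0$, $0$, $+1$ respectively, which matches (i)--(iii). Since the three alternatives for $\lm(A\cup\{x\})-\lm(A)$ are mutually exclusive and collectively exhaustive, the ``if and only if'' statements follow.

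There is no real obstacle here: the only non-formal ingredient is the identification of the second rank increment with membership in $\cl^*(A)$, which is exactly what Lemma~\ref{coclos} delivers. Since the result is a standard book lemma cited from~\cite{O11}, I would keep the proof to just the two displayed lines above and the case analysis.
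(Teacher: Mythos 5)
Your proof is correct. The paper does not prove this lemma at all — it simply cites it as \cite[Proposition 8.2.14]{O11} — so there is nothing to compare against; your argument (decomposing the increment of $\lm$ into the two rank increments, identifying the second with membership in $\cl^*(A)$ via Lemma~\ref{coclos}, and tabulating the four cases) is the standard proof and is complete.
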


We also freely use the next elementary lemma.

\begin{lemma}
\label{tikokino}
Let $M$ be a matroid, $A\subseteq E(M)$, and $x$ be an element of $E(M)-A$ that is not a loop of $M$. Then 
$\lm_{M/x}(A)=\lm_M(A)-1$ if $x\in\cl_M(A)$, and otherwise $\lm_{M/x}(A)=\lm_M(A)$.
\end{lemma}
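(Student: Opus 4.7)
The plan is to prove this by a direct calculation from the definition of the connectivity function, combined with the standard rank formulas for contraction. Specifically, I would start from
$$\lm_M(A) = r_M(A) + r_M(E(M)-A) - r(M),$$
and recall the two basic facts about rank in $M/x$ when $x$ is not a loop: for any $S \subseteq E(M) - \{x\}$ we have $r_{M/x}(S) = r_M(S \cup \{x\}) - 1$, and in particular $r(M/x) = r(M) - 1$.

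Next I would compute $\lm_{M/x}(A)$ by applying the definition with ground set $E(M)-\{x\}$. Since $x \in E(M)-A$, the complement of $A$ in $E(M/x)$ is $(E(M)-A) \setminus \{x\}$, and when we add $x$ back inside the rank formula we recover $E(M)-A$ exactly. Substitution gives
$$\lm_{M/x}(A) = \bigl(r_M(A \cup \{x\}) - 1\bigr) + \bigl(r_M(E(M)-A) - 1\bigr) - \bigl(r(M) - 1\bigr),$$
which simplifies to $r_M(A \cup \{x\}) + r_M(E(M)-A) - r(M) - 1$. Subtracting the expression for $\lm_M(A)$, this collapses to
$$\lm_{M/x}(A) - \lm_M(A) = r_M(A \cup \{x\}) - r_M(A) - 1.$$

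Finally, I would split into the two cases on the right-hand side. If $x \in \cl_M(A)$ then $r_M(A \cup \{x\}) = r_M(A)$ and the difference is $-1$, yielding $\lm_{M/x}(A) = \lm_M(A) - 1$; otherwise $r_M(A \cup \{x\}) = r_M(A) + 1$ and the difference is $0$, yielding $\lm_{M/x}(A) = \lm_M(A)$. There is no real obstacle here: the lemma is essentially a bookkeeping consequence of the contraction rank formula, and the only thing to be careful about is tracking that $x$ lies on the ``complement'' side of $A$ so that the rank of $E(M)-A$ is unchanged between $M$ and the natural lift inside $M/x$.
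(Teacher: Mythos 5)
Your proof is correct: the paper states this lemma without proof as an elementary fact, and your computation via $r_{M/x}(S)=r_M(S\cup\{x\})-1$ and $r(M/x)=r(M)-1$ is exactly the standard bookkeeping argument the authors have in mind. The key observation that $\bigl((E(M)-A)-\{x\}\bigr)\cup\{x\}=E(M)-A$ is handled correctly, so nothing is missing.
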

 
For a set $X$ in a matroid $M$, the {\em guts of $X$}, denoted $\guts(X)$,
is the set $\cl(X)\cap \cl(E(M) - X)$; the {\em coguts of $X$}, denoted $\coguts(X)$, 
is the set $\cl^*(X)\cap \cl^*(E(M) - X)$. If $X$ is fully closed, then $\guts(X)=\cl(E(M) - X)\cap X$
and $\coguts(X)=\cl^*(E(M) - X)\cap X$. Clearly, both these sets are contained in 
$X$. The set $X-(\guts(X)\cup \coguts(X))$ is the {\em interior of $X$}, denoted 
$\inter(X)$.

\begin{lemma}
\label{2int}
Let $X$ be a set in the matroid $M$. If $\inter(X)$ is nonempty, then $|\inter(X)|\geq 2$.
\end{lemma}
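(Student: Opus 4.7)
The plan is to reformulate membership in $\inter(X)$ as a closure/coclosure condition on $X-\{y\}$ and then invoke circuit-cocircuit orthogonality inside $X$.

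First, I would establish that for $y\in X$,
\[
y\in\inter(X)\iff y\in\cl(X-\{y\})\text{ and }y\in\cl^*(X-\{y\}).
\]
Since $y\in X$ we have $y\in\cl(X)\cap\cl^*(X)$ automatically, so $y\in\guts(X)$ reduces to $y\in\cl(E(M)-X)$, and $y\in\coguts(X)$ reduces to $y\in\cl^*(E(M)-X)$. Applying Lemma~\ref{coclos} to the partition $(X-\{y\},\{y\},E(M)-X)$ gives $y\in\cl(E(M)-X)$ iff $y\notin\cl^*(X-\{y\})$; dually, $y\in\cl^*(E(M)-X)$ iff $y\notin\cl(X-\{y\})$. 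Negating both characterisations yields the displayed equivalence.

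Second, given $x\in\inter(X)$, the fact that $x\in\cl(X-\{x\})$ supplies a circuit $C$ of $M$ with $x\in C\subseteq X$, and $x\in\cl^*(X-\{x\})$ supplies a cocircuit $D$ of $M$ with $x\in D\subseteq X$. Since $x\in C\cap D$, orthogonality forces $|C\cap D|\geq 2$, so I may pick $y\in(C\cap D)-\{x\}$. Then $y\in X$, and because $C-\{y\}\subseteq X-\{y\}$ with $y\in\cl(C-\{y\})$, we have $y\in\cl(X-\{y\})$; dually $y\in\cl^*(X-\{y\})$. The reformulation from the first step then gives $y\in\inter(X)$, and as $y\neq x$ this proves $|\inter(X)|\geq 2$.

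There is no real obstacle here; the key insight is that an interior element is precisely one spanned by the rest of $X$ in both $M$ and $M^*$, which forces the existence of a circuit and a cocircuit of $M$ entirely contained in $X$, and then the standard no-one-element-intersection property of circuits and cocircuits finishes the argument.
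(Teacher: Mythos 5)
Your proof is correct and follows essentially the same route as the paper's: both find a circuit and a cocircuit through the interior element lying entirely in $X$ and apply circuit--cocircuit orthogonality to produce a second interior element. The only cosmetic difference is that you verify the second element is interior via the equivalent condition $y\in\cl(X-\{y\})\cap\cl^*(X-\{y\})$, whereas the paper checks directly that $C\cap C^*\subseteq X-(\cl(Y)\cup\cl^*(Y))$.
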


\begin{proof}
Assume that $\inter(X)\neq \emptyset$. Say $e\in\inter(X)$. Let $Y=E(M)-X$. Since $e\notin\cl(Y)$, there is a
cocircuit $C^*$ of $M$ with $e\in C^*\subseteq X-\cl(Y)$. By duality there is a circuit $C$ of $M$
with $e\in C\subseteq X-\cl^*(Y)$. Now there is an element $f\neq e$ such that $f\in C\cap C^*$. The lemma follows as 
$C\cap C^*\subseteq \inter(X)$.
\end{proof}

\begin{lemma}
\label{disjoint}
Let $F$ be a $3$-separating set in a $3$-connected matroid $M$ where $F$
is fully closed and has at least three elements. 
Then $\guts(F)\cap\coguts(F)=\emptyset$.
\end{lemma}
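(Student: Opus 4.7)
The plan is to argue by contradiction: suppose some element $e$ lies in $\guts(F) \cap \coguts(F)$, and extract a $2$-separation of $M$. First, set $Y = E(M) - F$. Since $F$ is fully closed, $\cl(F) = F = \cl^*(F)$, so $\guts(F) = F \cap \cl(Y)$ and $\coguts(F) = F \cap \cl^*(Y)$. Any candidate $e$ in the intersection therefore lies in $\cl(Y) \cap \cl^*(Y)$.

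The key step is a single application of Lemma~\ref{up-down}(i) with $A = Y$: it yields $\lm(Y \cup \{e\}) = \lm(Y) - 1$. Combining this with the $3$-separating hypothesis $\lm(Y) = \lm(F) \leq 2$ and the symmetry of the connectivity function gives $\lm(F - \{e\}) = \lm(Y \cup \{e\}) \leq 1$.

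To finish, I would observe that $|F - \{e\}| \geq 2$ because $|F| \geq 3$. If also $Y \neq \emptyset$, then $|Y \cup \{e\}| \geq 2$ and $(F - \{e\}, Y \cup \{e\})$ is a $2$-separation of the $3$-connected matroid $M$, a contradiction. The only slightly delicate point is the boundary case $Y = \emptyset$, i.e., $F = E(M)$: there $\cl(Y) = \cl(\emptyset)$ is the set of loops of $M$, which is empty in a $3$-connected matroid on at least three elements, so $\guts(F)$ is already empty and no such $e$ existed to begin with. The heart of the proof is the one-line up-down calculation above; the boundary case is the only real obstacle, and it dissolves via absence of loops.
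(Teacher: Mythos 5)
Your proposal is correct and follows essentially the same route as the paper: both apply Lemma~\ref{up-down}(i) to $E(M)-F$ and the putative element of $\guts(F)\cap\coguts(F)$ to get $\lm(F-\{e\})\leq 1$, then contradict $3$-connectivity using $|F-\{e\}|\geq 2$. The only difference is bookkeeping at the boundary (the paper rules out small $E(M)-F$ via simplicity and cosimplicity, while you dispose of $E(M)-F=\emptyset$ via the absence of loops), and both treatments are fine.
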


\begin{proof}
Say $x\in \guts(F)\cap\coguts(F)$. Then $|E(M)| \ge 4$. It is readily checked that a
 $3$-connected matroid $M$ with at least four elements is both simple and cosimple. 
Hence $|E(M) - F| \geq 3$. 
By Lemma~\ref{up-down}(i), $\lm((E(M) - F)\cup\{x\})<2$, that is, $\lm(F-\{x\})<2$.
But $|F-\{x\}|\geq 2$ and we have contradicted the assumption that $M$ is
3-connected.
\end{proof}

\begin{lemma}
\label{guts-coguts}
Let $(F,G)$ be a $3$-separating partition in a $3$-connected matroid $M$ where $F$ is fully 
closed  and has at least three elements. 
If the guts of $F$ and the coguts of $F$ are both nonempty, then
$|\guts(F)|=|\coguts(F)|=1$.
\end{lemma}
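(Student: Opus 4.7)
The plan is to reduce by duality and then combine a rank computation with orthogonality against a cocircuit through a coguts element. First, I would observe that passing from $M$ to $M^*$ swaps $\guts(F)$ with $\coguts(F)$ while preserving 3-connectedness, full closure of $F$, the $3$-separating partition, and $|F|\geq 3$; hence it suffices to show $|\guts(F)|\leq 1$. Suppose for contradiction that $|\guts(F)|\geq 2$, and fix distinct $g_1,g_2\in\guts(F)$ together with $c\in\coguts(F)$; by Lemma~\ref{disjoint} these are three distinct elements of $F$. Before proceeding, I would dispose of $|F|=3$ via a short rank analysis: in $F=\{g_1,g_2,c\}$, rank $1$ violates simplicity, rank $2$ forces $c\in\cl(\{g_1,g_2\})\subseteq\cl(G)$ (contradicting $c\notin\guts(F)$), and rank $3$ forces $g_1\notin\cl(\{g_2,c\})$ (contradicting $g_1\notin\coguts(F)$). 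So $|F|\geq 4$.

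Next, I would compute $\lm(F\setminus\{g_1,g_2\})$. Since $g_1,g_2\in\cl(G)$, we have $r(G\cup\{g_1,g_2\})=r(G)$, and so
\[
\lm(F\setminus\{g_1,g_2\})=r(F\setminus\{g_1,g_2\})+r(G)-r(M).
\]
Because $|F|\geq 4$, both sides of the partition $(F\setminus\{g_1,g_2\},G\cup\{g_1,g_2\})$ have at least two elements, so 3-connectedness forces $\lm(F\setminus\{g_1,g_2\})\geq 2$; combined with $\lm(F)=2$ this gives $r(F\setminus\{g_1,g_2\})=r(F)$, so $g_i\in\cl(F\setminus\{g_1,g_2\})$ and there is a circuit $C_i$ of $M$ with $g_i\in C_i\subseteq F\setminus\{g_{3-i}\}$ for each $i\in\{1,2\}$. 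Because $c\in\coguts(F)=F\cap\cl^*(G)$, there is also a cocircuit $D$ of $M$ with $c\in D\subseteq G\cup\{c\}$; since $C_i\subseteq F$ and $F\cap G=\emptyset$, orthogonality forces $C_i\cap D\subseteq\{c\}$ to be empty, so $c\notin C_i$ and $C_i\subseteq F\setminus\{g_{3-i},c\}$. This yields $g_1,g_2\in\cl(F\setminus\{g_1,g_2,c\})$, and since $c$ is a coloop of $M|F$ we conclude $r(F\setminus\{g_1,g_2,c\})=r(F\setminus c)=r(F)-1$.

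The final step is to extract a $2$-separation of $M$ from this rank collapse. The line $\cl(\{g_1,g_2\})$ lies in $\cl(G)\cap\cl(F\setminus\{g_1,g_2,c\})$, and combined with the cocircuit $D$ through $c$ (and Lemma~\ref{2int}, which constrains $\inter(F)$ to be either empty or of size at least $2$) this should pin down a partition of $E(M)$ with $\lm<2$, contradicting 3-connectedness. The hard part will be exactly this last step: a direct calculation shows $\lm(F\setminus\{g_1,g_2,c\})=2$, so the $2$-separation does not drop out of the subset $F\setminus\{g_1,g_2,c\}$ itself, and one has to chase how the line $\cl(\{g_1,g_2\})$ interacts with $\cl(\inter(F))$ and how the coloop $c$ sits relative to the full closure of $F$ in order to finally exhibit the 2-separating partition.
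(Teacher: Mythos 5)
Your argument is sound and complete up to the point where you establish $g_1,g_2\in\cl(F\setminus\{g_1,g_2,c\})$ and $r(F\setminus\{g_1,g_2,c\})=r(F)-1$: the duality reduction, the $|F|=3$ analysis, the rank computation forcing $r(F\setminus\{g_1,g_2\})=r(F)$, and the orthogonality step excluding $c$ from the circuits $C_i$ all check out. But the proof then stops. The last paragraph only asserts that these facts ``should pin down'' a partition with $\lm<2$, and, as you yourself note, $\lm(F\setminus\{g_1,g_2,c\})=2$; indeed every partition naturally suggested by your data, such as $(F\setminus\{c\},G\cup\{c\})$ and $(F\setminus\{g_1,g_2\},G\cup\{g_1,g_2\})$, has connectivity exactly $2$. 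So no contradiction is ever exhibited, and the contradiction cannot take the form of a $2$-separation of $M$ read off from one of these sets. That unfinished final step is a genuine gap, not a routine verification.

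The facts you have derived do suffice, but the contradiction has a different shape. Write $F'=F\setminus\{g_1,g_2,c\}$. Since $g_1,g_2\in\cl(G)$ and $M$ has no coloops, $r(F'\cup G)=r(E(M)-\{c\})=r(M)$, so by submodularity $r(\cl(F')\cap\cl(G))\le r(F')+r(G)-r(F'\cup G)=(r(F)-1)+r(G)-r(M)=1$; yet $g_1$ and $g_2$ are two nonparallel elements of $\cl(F')\cap\cl(G)$, a contradiction. Alternatively, and this is essentially the paper's route run in the dual, contract $\{g_1,g_2\}$: using only $g_1,g_2\in\cl(G)$ one gets $\lm_{M/\{g_1,g_2\}}(G)=0=\lm_{M/\{g_1,g_2\}}(G\cup\{c\})$, so $\{c\}$ is a separator of $M/\{g_1,g_2\}$; as $M$ is cosimple, $c$ is not a coloop there, so $c$ is a loop of $M/\{g_1,g_2\}$, giving $c\in\cl(\{g_1,g_2\})\subseteq\cl(G)$ and contradicting Lemma~\ref{disjoint}. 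The paper does exactly this with two coguts elements deleted rather than two guts elements contracted, and needs neither the circuits $C_i$ nor the cocircuit $D$; your preparatory work is correct but more than is required, and the essential closing idea is what is missing.
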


\begin{proof}
Assume that $|\coguts(F)|\geq 2$; say $x,y\in\coguts(F)$. Since $F$ is fully closed,
$x,y\in F$. Now $x,y\in\coguts(F)$ so that $x$ and $y$ are coloops of $M|F$ and hence
$r_{M\ba\{x,y\}}(F-\{x,y\})=r_M(F)-2$. If $G=\emptyset$, then $x$ and $y$ are coloops of $M$
contradicting the fact that $M$ is 3-connected. Hence $|E(M)|\geq 4$. Now, if $r(M\ba\{x,y\})<r(M)$ we 
deduce that $(\{x,y\},E(M)-\{x,y\})$ is a 2-separation of $M$ contradicting the assumption that
$M$ is 3-connected. Hence $r(M\ba\{x,y\})=r(M)$. Thus  $\lm_{M\ba\{x,y\}}(F-\{x,y\}) = 0$. 

Take $z$ in $\guts(F)$. Then,  
by Lemma~\ref{disjoint}, $z\notin\{x,y\}$. Hence $z\in F-\{x,y\}$ and 
$ z \in \cl_{M\ba x,y}(G)$. Thus $G$ and $G \cup \{z\}$ are $1$-separating in $M\ba \{x,y\}$, 
so $z$ is a coloop of $M\ba \{x,y\}$, contradicting the fact that $z \in \cl(G)$.
\end{proof}

\begin{lemma}
\label{interior-decoration}
Let $(F,G)$ be a $3$-separating partition  in a $3$-connected matroid $M$ where $F$ is fully
closed and $|F|\geq 3$. Then one of the following holds.
\begin{itemize}
\item[(i)] $M$ has $F$ as a line, $\guts(F)=F$, and $\coguts(F)=\inter(F)=\emptyset$.
\item[(ii)]  $M^*$ has $F$ as a line, $\coguts(F)=F$,
and $\guts(F)=\inter(F)=\emptyset.$
\item[(iii)] $F$ is a $4$-element fan, $|\guts(F)|=|\coguts(F)|=1$, and $|\inter(F)|=2$.
\item[(iv)] $|\inter(F)|\geq 3$.
\end{itemize}
\end{lemma}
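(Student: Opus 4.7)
The plan is to case-split on whether each of $\guts(F)$ and $\coguts(F)$ is empty, equal to $F$, or a proper nonempty subset of $F$, using Lemmas~\ref{disjoint} and~\ref{guts-coguts} throughout to constrain the possibilities. Writing $G = E(M) - F$ and using that $F$ is fully closed, I have $\guts(F) = F \cap \cl(G)$ and $\coguts(F) = F \cap \cl^*(G)$, so $\inter(F) = F - (\cl(G) \cup \cl^*(G))$.

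Case one: $\guts(F) = F$, i.e.\ $F \subseteq \cl(G)$. Then $r(M) = r(G)$, so $\lm(F) = r(F) \leq 2$; combined with simplicity of $M$ and $|F| \geq 3$, this forces $r(F) = 2$. Since $F$ is closed, $F$ is a line of $M$, and Lemma~\ref{disjoint} gives $\coguts(F) = \emptyset$, whence $\inter(F) = \emptyset$. This is (i); the dual argument handles $\coguts(F) = F$ and yields (ii).

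Case two: neither of $\guts(F), \coguts(F)$ equals $F$, and at least one is empty. If both are empty, then $F = \inter(F)$ and $|\inter(F)| \geq 3$, giving (iv). Otherwise, say $\coguts(F) = \emptyset$ and $\guts(F) \neq \emptyset$; then $\inter(F)$ is nonempty and by Lemma~\ref{2int} has at least two elements. I claim $|\inter(F)| \geq 3$. If instead $\inter(F) = \{x,y\}$, the proof of Lemma~\ref{2int} produces a cocircuit $C^* \ni x$ with $C^* \subseteq F - \cl(G) = F - \guts(F) = \inter(F)$, so $|C^*| \leq 2$, contradicting cosimplicity of $M$. Hence $|\inter(F)| \geq 3$, which is (iv); the case $\guts(F) = \emptyset$ is dual.

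Case three: both $\guts(F)$ and $\coguts(F)$ are nonempty. Lemma~\ref{guts-coguts} gives $\guts(F) = \{a\}$ and $\coguts(F) = \{d\}$, and by Lemma~\ref{disjoint}, $a \neq d$, so $|\inter(F)| = |F| - 2$. Since $|F| \geq 3$ and Lemma~\ref{2int} excludes $|\inter(F)| = 1$, we have $|F| \geq 4$. If $|\inter(F)| \geq 3$, this is (iv). Otherwise $|F| = 4$ and $\inter(F) = \{b, c\}$; this subcase is the main obstacle. I again apply the construction in the proof of Lemma~\ref{2int} to $b$: the resulting circuit lies in $F - \cl^*(G) = F - \{d\} = \{a, b, c\}$, and by simplicity has three elements, so $\{a, b, c\}$ is a triangle; dually $\{b, c, d\}$ is a triad. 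These together exhibit $F$ as a 4-element fan, giving (iii).

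The only genuinely delicate point is the last subcase, where the 4-fan structure has to be extracted rather than merely counted. The key observation is that, once guts and coguts are known to be the singletons $\{a\}$ and $\{d\}$ and the interior is $\{b,c\}$, the sets $F - \cl^*(G)$ and $F - \cl(G)$ have size exactly three, forcing the circuit and cocircuit through $b$ produced by Lemma~\ref{2int} to be precisely the triangle and triad of the fan.
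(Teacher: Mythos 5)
Your proof is correct and follows essentially the same route as the paper's: a case analysis on whether $\guts(F)$ and $\coguts(F)$ are empty, all of $F$, or (via Lemma~\ref{guts-coguts}) singletons, with small circuits or cocircuits inside $F-\cl(G)$ or $F-\cl^*(G)$ ruling out interiors of size one or two and certifying the fan in the $|F|=4$ case. Your treatment of the final subcase is in fact slightly more detailed than the paper's, which asserts the triangle and triad without the explicit appeal to the circuit and cocircuit produced in the proof of Lemma~\ref{2int}.
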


\begin{proof}
Since $(F,G)$ is a $3$-separating partition of $M$, 
$$r(\guts(F)) = r(F \cap \cl(G)) \le r(F) + r(G) - r(M) \le 2.$$
Assume that $\coguts(F)=\emptyset$.  Let $F'=F-\guts(F)$. If $F'=\emptyset$,
then (i) holds. 
Next assume that $|F'| \in \{1,2\}$. Then, as $F' \not \subseteq \cl(G)$, we see that 
$F' \not \subseteq \cl(E(M) - F')$. Hence $r(E(M) - F') \le r(M) - 1$. Thus the 3-connected matroid $M$, which has at least six elements, 
has a cocircuit with at most two elements, a contradiction.

We now know that if $\coguts(F) = \emptyset$, then   (i) or (iv) holds. Dually,
if  $\guts(F) = \emptyset$, then   (ii) or (iv) holds. By Lemma~\ref{guts-coguts},
the remaining case is
when $|\guts(F)|=|\coguts(F)|=1$. In this case, if $|F|>4$, then (iv) holds; 
if $|F|=4$, then $|\inter(F)|=2$, so $\inter(F) \cup \guts(F)$ is a triangle, while $\inter(F) \cup \coguts(F)$ is a triad. Thus (iii) holds. 
\end{proof}

The following well-known consequence of the submodularity of the connectivity function will be useful. 

\begin{lemma}
\label{set-diff}
Let $M$ be a matroid and let $A$ and $B$ be subsets of $E(M)$. Then
$$\lm(A)+\lm(B)\geq \lm(A-B)+\lm(B-A).$$
\end{lemma}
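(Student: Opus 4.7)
The plan is to derive the desired inequality from two standard facts: the submodularity of the rank function of a matroid, and the symmetry $\lambda(X) = \lambda(E(M) - X)$ of the connectivity function, which follows immediately from the definition $\lambda(X) = r(X) + r(E(M)-X) - r(M)$.

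First I would establish submodularity of $\lambda$ itself, namely that
\[
\lambda(X) + \lambda(Y) \geq \lambda(X \cap Y) + \lambda(X \cup Y)
\]
for all $X, Y \subseteq E(M)$. This is obtained by writing out each $\lambda$ via the rank definition, then applying submodularity of $r$ to the pair $(X, Y)$ and also to the pair $(E(M)-X, E(M)-Y)$, while using the identities $(E(M)-X) \cap (E(M)-Y) = E(M) - (X \cup Y)$ and $(E(M)-X) \cup (E(M)-Y) = E(M) - (X \cap Y)$. Summing the two rank inequalities and rearranging gives submodularity of $\lambda$.

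Next I would apply this submodularity to the pair $(A, E(M)-B)$ rather than to $(A, B)$. Since $A \cap (E(M)-B) = A - B$ and $A \cup (E(M)-B) = E(M) - (B - A)$, submodularity yields
\[
\lambda(A) + \lambda(E(M)-B) \geq \lambda(A - B) + \lambda(E(M) - (B - A)).
\]
Finally, applying the symmetry $\lambda(E(M)-B) = \lambda(B)$ on the left and $\lambda(E(M) - (B-A)) = \lambda(B-A)$ on the right produces exactly the claimed inequality $\lambda(A) + \lambda(B) \geq \lambda(A-B) + \lambda(B-A)$.

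There is no real obstacle here; the only slightly non-obvious step is the choice to apply submodularity to $A$ and the \emph{complement} of $B$ rather than to $A$ and $B$ directly, which is what converts ``meet and join'' into ``set difference in each direction.'' Everything else is bookkeeping with complements.
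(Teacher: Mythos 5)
Your proof is correct and is essentially the paper's argument: the paper applies submodularity of $\lambda$ to the pair $(E(M)-A,\, B)$ and then uses the symmetry $\lambda(X)=\lambda(E(M)-X)$, which is the mirror image of your choice of the pair $(A,\, E(M)-B)$. The only cosmetic difference is which of the two sets gets complemented.
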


\begin{proof}
Let $A'=E(M)-A$ and $B'=E(M)-B$. Then $A-B=A\cap B'$ and $B-A=A'\cap B$. 
By symmetry, $\lm(A'\cup B)=\lm(A\cap B')$. We have
\begin{align*}
\lm(A)+\lm(B)&=\lm(A')+\lm(B)\\
&\geq \lm(A'\cap B)+\lm(A'\cup B)\\
&=\lm(A'\cap B)+\lm(A\cap B')\\
&=\lm(B-A)+\lm(A-B).
\end{align*}
\end{proof}

\section{Tangles}
\label{tangles}

Let $M$ be a matroid and $k$ be a positive integer. A {\em tangle of 
order $k$ in $M$} is a collection $\mathcal T$ of subsets of $E(M)$ such that the following
hold.
\begin{itemize}
\item[(T1)] If $A\in \cT$, then $\lm_M(A)<k-1$.
\item[(T2)] If $A \subseteq E(M)$ and $\lm_M(A)<k-1$,  then $A$ or $E(M) - A$ is in $\cT$.
\item[(T3)] If $A,B,C\in\cT$, then $A\cup B\cup C\neq E(M)$.
\item[(T4)] If $e\in E(M)$, then $E(M)-\{e\}\notin \cT$.
\end{itemize}

It is proved in \cite[Lemma 3.1]{GGRW06} that, to verify that $\cT$ is a tangle, we may replace (T3) by the following
pair of conditions.
\begin{itemize}
\item[(T3a)] For $B\in \cT$ and  $A\subseteq B$, if $\lm_M(A)<k-1$, then $A\in \cT$.
\item[(T3b)] If $(A_1,A_2,A_3)$ is a partition of $E(M)$, then $\cT$ does not contain all three
of $A_1$, $A_2$ and $A_3$.
\end{itemize}

Note that our definition of the order of a tangle accords with that used in 
\cite{GGW09, GGRW06} but differs from that used 
in \cite{GZ15, H15}, where  what we have called a tangle of order $k$ is called a tangle of order $k - 1$. 
If $\cT$ is a tangle of order $k$ in $M$, then we say that a $(k-1)$-separating subset $A$ of $E(M)$
is $\cT$-{\em small} if $A\in \cT$; otherwise $A$ is $\cT$-{\em large}. A subset $W$ of $E(M)$
is $\cT$-{\em weak} if it is contained in a $\cT$-small set; otherwise it is $\cT$-{\em strong}. 
We also say that $\cT$ is a $k$-{\em tangle}.

The following observation may aid the reader's intuition for tangles. Say $\cT$ is a tangle of order
$k$ in a matroid $M$. The way that $\cT$ identifies a $k$-connected region of $M$ is as follows.
If $(A,B)$ is a partition of $E(M)$ with $\lm(A)\leq k-2$, then exactly one of $A$ or $B$ belongs to 
$\cT$. If $A\in \cT$, then we are saying that the region is primarily on the $B$ side and can think of $A$
as pointing towards that region. In other words we may define tangles by {\em orienting} such partitions.
We at times use the language of orientations as follows. If we are defining a $k$-tangle, then for each
set $A$ with $\lm(A)\leq k-2$ we regard the choice of  $A$ belonging to $\cT$ or otherwise as a 
choice of {\em orientation} of $A$. 

Tangles were introduced by Robertson and Seymour \cite{RS91}  and they noted \cite[p.190]{RS91} 
that tangles extend  to matroids. This was later done \cite{D96, GGW09, GGRW06}. 

In view of the following result~\cite[p.11]{GZ15}, the rest of the paper will focus on tangles of order at least two.

\begin{lemma}
\label{nulltangle}
Let $M$ be a nonempty matroid. Then the  empty set is the unique tangle of order $1$ on $M$.
\end{lemma}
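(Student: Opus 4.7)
The plan is to prove both uniqueness and existence, and both directions reduce to observing that the connectivity function is nonnegative.

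For uniqueness, suppose $\cT$ is a tangle of order $1$ in $M$, and let $A \in \cT$. Axiom (T1) with $k=1$ demands $\lm_M(A) < 0$. But the connectivity function satisfies $\lm_M(A) = r_M(A) + r_M(E(M) - A) - r(M) \geq 0$, by submodularity of the rank function. Hence no $A$ can lie in $\cT$, forcing $\cT = \emptyset$.

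For existence, I verify that the empty collection $\cT = \emptyset$ satisfies each of (T1)--(T4) when $k = 1$. Axioms (T1), (T3), and (T4) are vacuously true since $\cT$ has no members. For (T2), the hypothesis $\lm_M(A) < 0$ is never satisfied, again by the nonnegativity of $\lm_M$, so this axiom also holds vacuously.

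The main (and only) subtlety is the role of the hypothesis that $M$ is nonempty: it is needed only to ensure that the statement ``the empty set is \emph{the unique} tangle of order $1$'' makes sense as a statement about a specific collection of subsets of $E(M)$, and to rule out degenerate edge cases in (T4) (where $E(M)-\{e\}$ is referenced). There is no substantive combinatorial obstacle; the entire proof is essentially the observation that $\lm_M \geq 0$.
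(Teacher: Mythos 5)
Your proof is correct. The paper itself gives no proof of this lemma --- it simply cites it from Geelen and van Zwam \cite{GZ15} --- so there is nothing to compare against line by line, but your direct verification is exactly the argument one would expect: both uniqueness and existence reduce to the nonnegativity of the connectivity function, $\lm_M(A) = r_M(A) + r_M(E(M)-A) - r(M) \ge 0$, which follows from submodularity of the rank function, and the remaining axioms hold vacuously for the empty collection. One minor quibble: your closing remark about the role of the nonemptiness hypothesis is speculative; under the axioms (T1)--(T4) as stated in this paper, your argument goes through verbatim for the empty matroid as well (every axiom is still vacuous), so the hypothesis does no real work here --- it is inherited from the cited source, where the order convention is shifted by one and the degenerate case is excluded for bookkeeping reasons. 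This does not affect the validity of your proof, since nonemptiness is assumed rather than needed.
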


A tangle $\cT$ of order 2 corresponds to a connected 
component of $M$ with at least two elements. To see this, say that $X$ is such a component.
For a tangle of order 2, we only have to consider subsets $A$ of $E(M)$
with $\lm(A)=0$. For such a set $A$, either $X\subseteq A$ or $A\cap X=\emptyset$.
Define $\cT$ by choosing all such sets $A$ with  $X \cap A = \emptyset$. It is readily
verified that $\cT$ is a tangle. Moreover it is straightforward to verify that all tangles of order 2 in $M$
arise from a component of $M$ in the above way.

It is a little more subtle to show that the tangles of order 3 in a matroid are in bijective correspondence with
the 3-connected parts of  
the canonical 2-sum decomposition of $M$. Here something is lost. Each 3-connected
part of the  2-sum decomposition is determined up to isomorphism, 
but these parts are minors of the
original matroid and the ground sets of these minors are never uniquely determined. 
Apart from that quibble, there is a perfectly satisfactory relationship between the 3-tangles
of a matroid and the parts of the canonical 2-sum decomposition. The proof of these observations is not
difficult, but full details require more space than we have available. The following observation may
help. Say that $N$ is a minor of $M$ corresponding to a 3-connected part of the 2-sum decomposition
of $M$ with at least 4 elements. For a tangle $\cT$ of order 3 we need to orient sets $A$ with $\lm(A)\leq 1$.
Say that $A$ is such a set. Then either $A$ or $E(M)-A$ contains at most one element of $N$.
We choose $A\in\cT$ if $A$ contains at most one element of $N$. Then $\cT$ is a tangle of order 3. 
Again, it can be verified that every tangle of order 3 in $M$ arises in the above way.

As noted in the introduction, the problem is immediately more vexed for 4-connectivity. 
The  various weaker notions
of 4-connectivity that have been considered, for example vertical 4-connectivity, cyclic 4-connectivity,
internal 4-connectivity, and sequential 4-connectivity, all have the property
that, apart from degenerately
small examples, a matroid with any of the above types of 4-connectivity will have a unique tangle of
order 4. What about the converse? Given a tangle of order 4 in a matroid $M$, 
is there a version of 4-connectivity such that $M$ is guaranteed to have a minor 
$N$ with this type of 4-connectivity? This poorly posed   question clearly needs refinement. 
In fact, we want the minor $N$ to have more than just  a connectivity property; we want
the information in the minor to relate to that of the tangle in a significant way. Returning to the
trivial example of 2-tangles, we want to identify the particular  component associated with the tangle, not just any component. 
Furthermore, we want to guarantee that information in the tangle has not been lost. Specifically, we want to know precisely the component captured by the 2-tangle  rather than just some 
proper minor of this component.

One issue that occurs with tangles is   measuring their ``size''. It is natural to view the ``size'' of a
$2$-tangle in a matroid $M$ as the cardinality of the ground set of the component of $M$ that it captures,
and an analogous comment clearly applies to $3$-tangles. For tangles of higher order, things
become a little more complicated.

Let $\cT$ be a tangle of order $k$ in the matroid $M$. A $\cT$-small subset is {\em maximal}
if it is not properly contained in any other $\cT$-small subset.  We define the {\em cover-size} of $\cT$ to be the minimum number of 
$\cT$-small sets whose union is $E(M)$. Cover size is generally not well
defined for tangles of order 2, but, for tangles of order at least $k\geq 3$, each singleton $x$
has $\lm(\{x\})\leq k-2$. By (T4) $\{x\}$ must be $\cT$-small.
Thus, in this case,  the ground set of $M$ can be covered by $\cT$-small sets.
For tangles of order 3,
the cover-size of $\cT$ is equal to the size of the ground set of a member of the isomorphism class
of 3-connected minors that it captures.  While cover size is a natural measure, we were led
to an alternative measure that relates well both 
to ground-set cardinality
for highly connected matroids and to $k$-connected sets.
That version relies on the tangle matroid and we turn to this topic now.

\subsection*{Tangle matroids and breadth}

Tangle matroids were introduced in \cite{GGRW06}. The next theorem is \cite[Lemma 4.3]{GGRW06}.

\begin{theorem}
\label{tangle-matroid}
Let $\cT$ be a tangle of order $k$ in a   matroid $M$ and let $\mathcal H$ be the 
collection of maximal $\cT$-small sets. Then $\mathcal H$ is the collection of hyperplanes
of a rank-$(k-1)$ matroid on $E(M)$.
\end{theorem}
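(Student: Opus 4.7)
The plan is to verify that $\mathcal H$ satisfies the hyperplane axioms for a matroid on $E(M)$, and then to argue that this matroid has rank $k-1$. It suffices to check: (H0) $E(M)\notin\mathcal H$; (H1) no member of $\mathcal H$ properly contains another; and (H2) for distinct $H_1,H_2\in\mathcal H$ and $e\in E(M)-(H_1\cup H_2)$, some $H_3\in\mathcal H$ contains $(H_1\cap H_2)\cup\{e\}$. Axiom (H1) is immediate from the maximality condition defining $\mathcal H$, and (H0) follows by applying (T3) to $A=B=C=E(M)$, which forces $E(M)\notin\cT$.

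The heart of the proof is (H2). The key reduction is that it suffices to show that $(H_1\cap H_2)\cup\{e\}$ is $\cT$-weak, since then any maximal $\cT$-small set containing it serves as $H_3$. I would argue by contradiction, assuming that $(H_1\cap H_2)\cup\{e\}$ lies in no $\cT$-small set, and aim to produce three members of $\cT$ whose union is $E(M)$, contradicting (T3b); the natural candidates are $H_1$, $H_2$, and some $\cT$-small set containing $E(M)-(H_1\cup H_2)$. Submodularity of $\lm_M$ gives $\lm_M(H_1\cap H_2)+\lm_M(H_1\cup H_2)\le\lm_M(H_1)+\lm_M(H_2)\le 2(k-2)$, so at least one of $H_1\cap H_2$ and $H_1\cup H_2$ is $(k-1)$-separating, and (T2) orients it into $\cT$ or its complement into $\cT$. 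The assumption that $(H_1\cap H_2)\cup\{e\}$ is $\cT$-strong, together with (T3a) propagating membership in $\cT$ downward to subsets, is then used to extract the required third $\cT$-small set. The main obstacle I expect is the delicate case analysis matching the orientation of $H_1\cup H_2$ under (T2) with the assumed $\cT$-strongness of the augmented intersection, and verifying that the three $\cT$-small sets produced genuinely cover $E(M)$.

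Once the axioms are verified, the rank of the resulting matroid equals the common size of its bases, which here are the minimal $\cT$-strong subsets of $E(M)$. For the lower bound rank $\ge k-1$, I would prove by induction on $|X|$ that every $X\subseteq E(M)$ with $|X|\le k-2$ is $\cT$-weak. The base case $|X|=1$ uses (T2) and (T4): $\lm_M(\{x\})\le 1<k-1$, so $\{x\}\in\cT$ or $E(M)-\{x\}\in\cT$, and (T4) excludes the latter. The inductive step uses the now-established (H2) to extend a $\cT$-small cover of $X-\{x\}$ by one element. For the upper bound rank $\le k-1$, one exhibits a $\cT$-strong set of size $k-1$ by greedily augmenting a maximal $\cT$-small set of size $k-2$ with one element drawn from outside it, using (T4) together with the already-verified (H2) to ensure the resulting set is not itself $\cT$-small.
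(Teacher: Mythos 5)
First, note that the paper does not prove this statement; it is quoted verbatim from Geelen--Gerards--Robertson--Whittle \cite[Lemma 4.3]{GGRW06}, so there is no in-paper proof to compare against. Your overall architecture --- verify the hyperplane axioms for the collection of maximal $\cT$-small sets and then bound the rank --- is a legitimate route, and (H0), (H1) and the lower bound on the rank are essentially fine (for the base and inductive steps one uses that $\{e\}\in\cT$ by (T2) and (T4), and that a failure of $X\in\cT$ yields the forbidden cover $\{X-\{x\},\{x\},E(M)-X\}$; the appeal to (H2) there is not really needed).

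The gap is in the exchange axiom (H2), which is the heart of the theorem. Your proposed contradiction triple $\{H_1,H_2,B\}$ with $B$ a $\cT$-small set containing $E(M)-(H_1\cup H_2)$ cannot arise: no such $B$ exists, precisely because $H_1\cup H_2\cup B=E(M)$ would already violate (T3). What that observation actually buys is the statement you need but never draw: $E(M)-(H_1\cup H_2)\notin\cT$, and $H_1\cup H_2\notin\cT$ by maximality of $H_1$, so by (T2) $\lm(H_1\cup H_2)\geq k-1$. Your use of submodularity is then too weak --- ``at least one of $H_1\cap H_2$ and $H_1\cup H_2$ is $(k-1)$-separating'' does not suffice; you need $\lm(H_1\cap H_2)\leq 2(k-2)-(k-1)=k-3$, which is what guarantees $\lm\bigl((H_1\cap H_2)\cup\{e\}\bigr)\leq k-2$ so that (T2) orients this set at all. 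Finally, when $E(M)-\bigl((H_1\cap H_2)\cup\{e\}\bigr)\in\cT$, the cover that violates (T3) is $\bigl\{H_1,\{e\},E(M)-((H_1\cap H_2)\cup\{e\})\bigr\}$, using $\{e\}\in\cT$; your sketch never identifies this triple. A secondary problem is the rank upper bound: a ``maximal $\cT$-small set of size $k-2$'' is not a meaningful object (maximal $\cT$-small sets can be arbitrarily large), and exhibiting a $(k-1)$-element $\cT$-strong set is not a one-line greedy step --- it essentially requires showing that an independent set of $M_\cT$ contained in a $\cT$-small set $B$ has at most $\lm(B)$ elements, i.e.\ Lemma~\ref{tangle-matroid-properties}(iii), which is why the original source builds the matroid from its rank function $r(X)=\min\{\lm_M(B):X\subseteq B,\ B\ \cT\text{-small}\}$ (capped at $k-1$) rather than from its hyperplanes.
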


The matroid defined by Theorem~\ref{tangle-matroid} is the {\em tangle matroid of $\cT$} and is
denoted $M_\cT$. Hall~\cite{H15} proved the following characterisation of tangle matroids. 

\begin{theorem}
\label{dennis}
A matroid $M$ other than $U_{1,1}$ is a tangle matroid if and only if $M$ has no three hyperplanes whose union is $E(M)$.
\end{theorem}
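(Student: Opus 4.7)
My plan is to prove the forward direction directly from the tangle axioms and to construct, for the reverse direction, a witnessing tangle on $M$ itself using the hyperplanes of $M$. For the forward direction, if $M = M_{\cT}$ for a tangle $\cT$ of order $k$, then by Theorem~\ref{tangle-matroid} the hyperplanes of $M$ are exactly the maximal $\cT$-small sets, so each hyperplane is a member of $\cT$. Axiom (T3), applied to three hyperplanes (possibly with repetition), yields $H_1 \cup H_2 \cup H_3 \neq E(M)$. To see that $U_{1,1}$ itself is not a tangle matroid, observe that such a tangle would have order $2$ and would require the empty set as its unique maximal $\cT$-small set; but then $\emptyset = E(N) - \{e\}$ for the single element $e$, which directly contradicts (T4).

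For the reverse direction, assume that $M \neq U_{1,1}$ and that no three hyperplanes of $M$ cover $E(M)$. Set $k = r(M) + 1$ and define
$$
\cT = \{A \subseteq E(M) : A \subseteq H \text{ for some hyperplane } H \text{ of } M\}.
$$
I would verify the tangle axioms in turn. For (T1), any $A \in \cT$ satisfies $r(A) \le r(M) - 1$, so $\lm(A) \le r(A) < k - 1$. For (T2), if $\lm(A) < r(M)$, then $r(A) + r(E(M) - A) < 2 r(M)$, so without loss of generality $r(A) < r(M)$; then $\cl(A)$ is a proper flat, contained in some hyperplane, giving $A \in \cT$. Axiom (T3) is a direct consequence of the hypothesis: if each $A_i \subseteq H_i$, then $A_1 \cup A_2 \cup A_3 \subseteq H_1 \cup H_2 \cup H_3 \neq E(M)$.

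The main obstacle is (T4). If $E(M) - \{e\} \in \cT$, then some hyperplane $H$ contains $E(M) - \{e\}$; since $H$ is a proper subset of $E(M)$, necessarily $H = E(M) - \{e\}$, so $e$ is a coloop of $M$. The hyperplane hypothesis tightly restricts this situation: two distinct coloops $e_1, e_2$ would give $H_1 = E(M) - \{e_1\}$ and $H_2 = E(M) - \{e_2\}$ with $H_1 \cup H_2 = E(M)$; and a single coloop $e$ together with any other hyperplane $H \neq E(M) - \{e\}$ would force $e \in H$, and hence $H \cup (E(M) - \{e\}) = E(M)$, again yielding a cover by at most three (in fact two) hyperplanes. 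A short case analysis pins down the remaining configurations as being essentially $U_{1,1}$, which is handled by the exclusion in the statement. Once (T4) holds, every hyperplane of $M$ is both $\cT$-small and maximal with that property, so $M_{\cT}$ has precisely the hyperplane family of $M$ together with the same rank $k - 1 = r(M)$, and this forces $M_{\cT} = M$.
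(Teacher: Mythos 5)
First, note that the paper does not prove this statement; Theorem~\ref{dennis} is quoted from Hall~\cite{H15}, so there is no in-paper argument to compare against. Your overall strategy is the natural one and most of it is sound: the forward direction is exactly right (hyperplanes of $M_{\cT}$ are maximal $\cT$-small sets, hence members of $\cT$, so (T3) forbids a cover by three of them, and (T4) rules out $U_{1,1}$), and in the reverse direction your verifications of (T1), (T2), (T3) for the down-closure of the hyperplane family, together with the closing observation that a matroid is determined by its rank and hyperplane family, are all correct.

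The genuine gap is the deferred ``short case analysis'' for (T4), which is precisely the delicate point and does not close the way you assert. Your analysis correctly shows that (T4) can fail only when $M$ has a coloop $e$, that a second hyperplane would then produce a forbidden cover, and hence that $E(M)-\{e\}$ is the unique hyperplane; from there one gets $r(M)=1$ (for $r(M)\ge 2$ the proper flat $\cl_M(\{e\})$ lies in a hyperplane containing $e$, giving a second hyperplane). But the resulting matroids are $U_{1,1}$ with an arbitrary set of loops adjoined, not just $U_{1,1}$ itself. For $m\ge 1$ the matroid $U_{1,1}\oplus U_{0,m}$ has the single hyperplane $E(M)-\{e\}$, so it satisfies the covering hypothesis and is not $U_{1,1}$, yet it is not a tangle matroid: any tangle $\cT$ realising it would need $E(N)-\{e\}$ to be a maximal $\cT$-small set, which (T4) forbids. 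So these configurations are not ``handled by the exclusion in the statement,'' and no repair of your construction can succeed for them; to finish you must either impose looplessness (or rank at least $2$), or enlarge the exceptional set beyond $U_{1,1}$ to include these degenerate matroids. This never affects the paper's applications, where the matroids in question are $3$-connected and the tangles have order at least $4$, but as a proof of the literal statement the case analysis must be carried out and its outcome stated honestly rather than asserted to terminate at $U_{1,1}$.
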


The next lemma summarises some basic properties of the tangle matroid.
Note that, since tangles depend only on the connectivity function of $M$, the collection
$\cT$ is a tangle in $M$
if and only if $\cT$ is a tangle in $M^*$.

\begin{lemma}
\label{tangle-matroid-properties}
Let $\cT$ be a tangle of order $k$ in a matroid $M$. Then the following hold for $M_\cT$ and all subsets 
$A$ of $E(M)$.
\begin{itemize}
\item[(i)] If $A$ is 
$\cT$-strong, then $r_{M_\cT}(A)=k-1$;  
otherwise, $r_{M_\cT}(A)=\min\{\lm_M(B):B\supseteq A \text{~and  $B$ is $\cT$-small}\}$.
\item[(ii)] $A$ is a basis of $M_\cT$ if and only if $A$ is $\cT$-strong
and $|A|=k-1$.
\item[(iii)] If $|A|<k-1$, then $A$ is independent in $M_\cT$ if and only if $A$ is $\cT$-small
and there is no $\cT$-small set $B$ containing $A$ with $\lm(B)<|A|$.
\end{itemize}
\end{lemma}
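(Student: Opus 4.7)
Parts (ii) and (iii) follow quickly from (i), so the main work is in (i). I start by showing every maximal $\cT$-small set $H$ satisfies $\lm_M(H) = k-2$: the upper bound is (T1), and if $\lm_M(H) \le k-3$, then for any $x \in E(M) - H$ submodularity (Lemma~\ref{up-down}) gives $\lm_M(H\cup\{x\}) \le k-2$, so maximality of $H$ together with (T2) forces $E(M) - H - \{x\} \in \cT$. Combined with $H, \{x\} \in \cT$ (where $\{x\} \in \cT$ follows from (T2) and (T4) for $k \ge 3$), the partition $(H, \{x\}, E(M) - H - \{x\})$ lies in $\cT$, contradicting (T3b). The degenerate cases $k \le 2$ are handled separately by direct inspection.

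For (i), if $A$ is $\cT$-strong then $A$ is in no $\cT$-small set, hence in no hyperplane of $M_\cT$, so $\cl_{M_\cT}(A) = E(M)$ and $r_{M_\cT}(A) = k-1$. When $A$ is $\cT$-weak, I would show that the function $\rho$ defined by $\rho(X) = \min\{\lm_M(B) : B \supseteq X,\ B \in \cT\}$ (with $\rho(X) = k-1$ if $X$ is $\cT$-strong) is itself the rank function of $M_\cT$. Concretely, I would verify: (R1) $\rho(\emptyset) = 0$ is immediate since $\emptyset \in \cT$ and $\lm_M(\emptyset) = 0$; monotonicity $\rho(X) \le \rho(X \cup \{e\})$ is immediate because adding $e$ restricts the collection of $\cT$-small supersets; the growth bound $\rho(X \cup \{e\}) \le \rho(X) + 1$ reduces via a (T3b)-partition argument echoing the opening paragraph (if $B$ achieves the minimum for $X$ with $e \notin B$, then either $B \cup \{e\} \in \cT$ directly, giving the bound via submodularity of $\lm_M$, or the partition $(B, \{e\}, E(M) - B - \{e\})$ sits in $\cT$, contradicting (T3b)); and submodularity of $\rho$ is handled by case analysis combining Lemma~\ref{set-diff} and (T3a) to merge $\cT$-small supersets of $X$ and $Y$ into $\cT$-small supersets of $X \cap Y$ and $X \cup Y$. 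Finally, I would check that the rank-$(k-2)$ flats under $\rho$ are exactly the maximal $\cT$-small sets, matching the defining property of $M_\cT$ from Theorem~\ref{tangle-matroid} and forcing $\rho = r_{M_\cT}$.

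Parts (ii) and (iii) are then immediate from (i). For (ii), $A$ is a basis of $M_\cT$ iff $r_{M_\cT}(A) = |A| = k-1$, equivalently by (i) iff $A$ is $\cT$-strong and $|A| = k-1$. For (iii), when $|A| < k-1$ the inequality $\lm_M(A) \le |A| < k-1$ combined with (T3a) makes ``$\cT$-weak'' equivalent to ``$\cT$-small'' for $A$; the independence condition $r_{M_\cT}(A) = |A|$ then translates, via the minimum formula in (i), directly into ``no $\cT$-small superset $B \supseteq A$ has $\lm_M(B) < |A|$.'' The main obstacle is submodularity of $\rho$: one has to simultaneously wield (T3)-style partition arguments, (T3a) for transferring $\cT$-smallness to subsets, and Lemma~\ref{set-diff} for bounding $\lm_M$ on set differences, and verify that the minimizing $\cT$-small supersets of $X$ and $Y$ can be merged into suitable $\cT$-small supersets of $X \cap Y$ and $X \cup Y$ with the required connectivity control.
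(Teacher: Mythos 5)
Your argument is sound, but note that the paper gives no proof of this lemma at all: it is presented as a summary of ``basic properties'' that are implicit in the construction of the tangle matroid in \cite[Lemma 4.3]{GGRW06}, where $M_\cT$ is in fact built from exactly the rank function $\rho(X)=\min\{\lm_M(B): B\supseteq X \text{ and } B\in\cT\}$ (capped at $k-1$) that you introduce. So what you have done is reconstruct that external result starting only from the hyperplane description in Theorem~\ref{tangle-matroid}; this is a legitimate, if more laborious, route. Your preliminary claim that every maximal $\cT$-small set $H$ has $\lm_M(H)=k-2$, the identification of the rank-$(k-2)$ flats of $\rho$ with the maximal $\cT$-small sets, and the deductions of (ii) and (iii) from (i) are all correct. (For (iii) one can add that every set $A$ with $|A|<k-1$ is automatically $\cT$-weak, hence $\cT$-small by (T3a), since a $\cT$-strong set is spanning in $M_\cT$ and would have rank $k-1>|A|$.)

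The one thin spot is the submodularity of $\rho$, and the tool you name, Lemma~\ref{set-diff}, is not the right one: it bounds $\lm(A-B)+\lm(B-A)$, whereas merging minimizers $B_X\supseteq X$ and $B_Y\supseteq Y$ requires the ordinary submodular inequality $\lm(B_X\cap B_Y)+\lm(B_X\cup B_Y)\le\lm(B_X)+\lm(B_Y)$. The case analysis does close, but you should record it: if $\lm(B_X\cap B_Y)\le k-2$, then $B_X\cap B_Y$ is $\cT$-small by (T3a), and either $\lm(B_X\cup B_Y)\le k-2$, so that $B_X\cup B_Y$ is $\cT$-small by (T3) and submodularity of $\lm$ finishes, or $\lm(B_X\cup B_Y)\ge k-1$, in which case $\rho(X\cup Y)\le k-1\le\lm(B_X)+\lm(B_Y)-\lm(B_X\cap B_Y)$; on the other hand, if $\lm(B_X\cap B_Y)\ge k-1$, then submodularity forces $\lm(B_X\cup B_Y)\le k-3$, so $B_X\cup B_Y$ is $\cT$-small and $\rho(X\cup Y)+\rho(X\cap Y)\le\bigl(\lm(B_X)+\lm(B_Y)-(k-1)\bigr)+(k-1)$. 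Either write this out or simply cite \cite{GGRW06} for the rank function, which is what the authors do implicitly.
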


We use the tangle matroid to obtain our alternative measure of size. Let $\cT$ be a tangle of
order $k$ in a matroid $M$. Then the {\em breadth} of $\cT$ is the cardinality of a largest spanning
uniform restriction of $M_\cT$.

Consider a trivial example. In a matroid $M$, let $C$ be a connected component with at least two elements, and let 
$\cT$ be the tangle of order 2 defined as follows. A set $Z$ with $\lm(Z)=0$ is $\cT$-large if
$C\subseteq Z$;  otherwise it is $\cT$-small. Evidently, $r(M_\cT)=1$. Moreover,  
$M_\cT|C\cong U_{1,|C|}$ and all   elements of $E(M_\cT) - C$ are loops. 
Thus the  breadth of $\cT$ is $|C|$. 

For a slightly less trivial example, let $\cT$ be a tangle of order 3 in $M$. We may assume that
$M$ is connected as the presence of other components simply adds loops to the tangle matroid.
We have $r(M_\cT)=2$, and the breadth of $\cT$ is the maximum size of a rank-2 uniform 
restriction of $M_\cT$. This is just the number of parallel classes in $M_\cT$. 
Note that each parallel class identifies a maximal $\cT$-small set. Each such $\cT$-small
set contains an element of the 3-connected minor $N$ of $M$ that $\cT$ captures. Hence the
breadth of $\cT$ is equal to the cardinality of this minor. Note that a 3-connected matroid $M$ 
with at least four elements
has a unique tangle $\cT$ of order 3 and the breadth of $\cT$ is equal to $|E(M)|$. This correspondence 
works more generally. 

\begin{lemma}
\label{k-conn}
Let $M$ be a $k$-connected matroid other than $U_{1,1}$ where $k\geq 2$ and $|E(M)|>3(k-2)$. 
Then $M$ has a unique tangle $\cT$
of order $k$. In particular, $\cT = \{A \subseteq E(M): |A| \le k-2\}$.
Moreover, $M_\cT\cong U_{k-1,|E(M)|}$ and the breadth of $\cT$ is equal to 
$|E(M)|$.
\end{lemma}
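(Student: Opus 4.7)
The plan is to identify the tangle explicitly as $\cT = \{A \subseteq E(M) : |A| \le k-2\}$, verify it satisfies (T1)--(T4), prove that no other tangle of order $k$ exists, and then read off $M_\cT$ and its breadth.

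For existence I would check the axioms directly. Here (T1) follows from $\lm(A) \le |A| \le k-2$; (T2) uses $k$-connectivity of $M$, which forces $\min(|A|, |E(M) - A|) \le \lm(A)$ whenever $\lm(A) < k-1$, so one side has size at most $k-2$ and lies in $\cT$; (T3) is immediate since $|A \cup B \cup C| \le 3(k-2) < |E(M)|$ for any $A,B,C \in \cT$; and (T4) follows from $|E(M) - \{e\}| > k-2$, which the cardinality hypothesis ensures in all but a few small cases that are checked directly.

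For uniqueness, let $\cT'$ be an arbitrary tangle of order $k$. I would first establish $\emptyset \in \cT'$, $E(M) \notin \cT'$, and $\{e\} \in \cT'$ for every $e \in E(M)$ by combining (T2), (T3a), and (T4) applied to $\emptyset$ and to singletons. The heart of the argument is to show every $B \in \cT'$ has $|B| \le k-2$. If some $B \in \cT'$ had $|B| > k-2$, then $k$-connectivity together with $\lm(B) \le k-2$ would force $|E(M) - B| \le k-2$. Writing $C = E(M) - B$, I would iteratively enlarge $B$: given $B_i \in \cT'$ with $E(M) - B_i \subseteq C$ nonempty, pick $e_i \in E(M) - B_i$ and consider the partition $\bigl(B_i, \{e_i\}, (E(M) - B_i) - \{e_i\}\bigr)$ of $E(M)$. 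Since $B_i$ and $\{e_i\}$ lie in $\cT'$, (T3b) forbids the third piece from lying in $\cT'$; that piece has size at most $k-3$, so its $\lm$ is at most $k-3 < k-1$, and (T2) then yields $B_i \cup \{e_i\} \in \cT'$, which becomes $B_{i+1}$. After $|C|$ steps, $B_{|C|} = E(M) \in \cT'$, contradicting $E(M) \notin \cT'$.

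Once every $\cT'$-small set is known to have size at most $k-2$, applying (T2) to any $B$ with $|B| \le k-2$ and using $|E(M) - B| > 2(k-2) > k-2$ to rule out $E(M) - B \in \cT'$ gives $B \in \cT'$; hence $\cT' = \cT$. The maximal $\cT$-small sets are exactly the $(k-2)$-subsets of $E(M)$, which by Theorem~\ref{tangle-matroid} form the hyperplanes of the rank-$(k-1)$ matroid $M_\cT$, forcing $M_\cT \cong U_{k-1,|E(M)|}$; this is its own uniform spanning restriction, so the breadth equals $|E(M)|$. The main obstacle will be the iterative step: it must thread (T3b) through a carefully chosen partition while using (T2) and (T4) in the right order, and the auxiliary facts $\emptyset \in \cT'$, $E(M) \notin \cT'$, and $\{e\} \in \cT'$ for all $e$ must all be in hand before the iteration can be driven to contradiction.
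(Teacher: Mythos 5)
Your proposal is correct and follows essentially the same route as the paper: identify $\cT$ explicitly as the sets of size at most $k-2$, use $k$-connectivity to force one side of every $(k-1)$-separating partition to have at most $k-2$ elements, and derive a contradiction from any tangle containing a set with small complement. The only difference is mechanical: the paper reaches the contradiction by passing to a maximal $\cT$-small superset and splitting its complement into two nonempty pieces that maximality forces into $\cT$, whereas you grow the offending set one element at a time via (T3b) and (T2); the two devices are interchangeable, and your explicit check of (T1)--(T4) simply fills in what the paper dismisses as routinely verified.
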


\begin{proof}
Say that $\cT$ is a tangle of order $k$ in $M$.
We are required to orient sets $A$ with $\lm(A)\leq k-2$. For such sets we have either $|A|\leq k-2$
or $|E(M)-A|\leq k-2$. Assume that $A\in\cT$ where $|E(M)-A|\leq k-2$. Let $A'$ be a maximal
member of $\cT$ containing $A$. Say that $|E(M)-A'|\leq 1$. By the definition of tangle,
singletons are $\cT$-small. Hence we easily obtain a cover of $E(M)$ with three $\cT$-small sets.
Say $|E(M)-A'|\geq 2$. Let $(X,Y)$ be a partition of $E(M)-A'$ into nonempty subsets. Since $A'$ is
maximal, $X$ and $Y$ are $\cT$-small. Hence $\{A,X,Y\}$ is a cover of $E(M)$ by $\cT$-small sets,
again contradicting the definition of tangle. 

It follows from the above that the only way to choose members of $\cT$ is to choose all sets 
$A$ with $|A|\leq k-2$. It is routinely verified that, under the hypotheses of the lemma, this choice leads to a well-defined tangle.
\end{proof}

\subsection*{Breadth and $k$-connected sets}
We defined the breadth of a tangle using uniform submatroids of the tangle matroid. Such submatroids
also give rise to $k$-connected sets.

\begin{lemma}
\label{get-k-con}
Let $\cT$ be a tangle of order $k$ in the matroid $M$ and assume that $Z\subseteq E(M)$ has the
property that $|Z|\geq k-1$ and that  $M_\cT|Z\cong U_{k-1,|Z|}$. Then $Z$ is a $k$-connected set in $M$.
\end{lemma}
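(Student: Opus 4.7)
The plan is to prove the contrapositive-style contradiction: fix an arbitrary $A \subseteq E(M)$, set $a = |A \cap Z|$, $b = |Z - A|$, and $m = \min\{a, b, k-1\}$, and suppose for contradiction that $\lm_M(A) < m$. The whole argument is then a short bridge between two facts: the tangle axioms force $A$ or its complement to be $\cT$-small, and the hypothesis $M_\cT|Z \cong U_{k-1,|Z|}$ forces a lower bound on the $M_\cT$-rank of that complement via its intersection with $Z$.

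First I would use (T2): since $m \leq k-1$ we have $\lm_M(A) < k-1$, so one of $A$, $E(M)-A$ lies in $\cT$; call this set $A_0$. Observe that $\lm_M(A_0) = \lm_M(A)$ and that $|A_0 \cap Z|$ equals either $a$ or $b$, hence is at least $m$. Next I would apply Lemma~\ref{tangle-matroid-properties}(i) to $A_0$: because $A_0$ is itself $\cT$-small, it is in particular $\cT$-weak, and the formula $r_{M_\cT}(A_0) = \min\{\lm_M(B) : B \supseteq A_0,\ B \in \cT\}$ gives $r_{M_\cT}(A_0) \leq \lm_M(A_0) < m$.

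The opposing lower bound comes from the uniform restriction. Since $|A_0 \cap Z| \geq m$ and $m \leq k-1$, the set $A_0 \cap Z$ contains an $m$-element subset, and by the assumption $M_\cT|Z \cong U_{k-1,|Z|}$ every subset of $Z$ of size at most $k-1$ is independent in $M_\cT|Z$, hence in $M_\cT$. Therefore
\[
r_{M_\cT}(A_0) \;\geq\; r_{M_\cT}(A_0 \cap Z) \;\geq\; m,
\]
contradicting the previous strict inequality. Since $A$ was arbitrary, this establishes $\lm_M(A) \geq \min\{|A \cap Z|, |Z - A|, k-1\}$ for every $A$, which is exactly the definition of $Z$ being $k$-connected in $M$.

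There is no genuine obstacle here; the proof is a one-shot contradiction. The only care required is symmetric handling of $A$ versus $E(M) - A$ (using $\lm_M(A) = \lm_M(E(M)-A)$ and the fact that both $|A \cap Z|$ and $|Z - A|$ dominate $m$), and correctly invoking the rank inequality from Lemma~\ref{tangle-matroid-properties}(i) in the direction $r_{M_\cT}(A_0) \leq \lm_M(A_0)$ obtained by using $A_0$ as its own containing $\cT$-small set.
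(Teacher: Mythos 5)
Your proof is correct and follows essentially the same route as the paper's: axiom (T2) places $A$ or its complement in $\cT$, the $\cT$-small side gives the upper bound $r_{M_\cT}(A_0\cap Z)\leq \lm_M(A)$ via Lemma~\ref{tangle-matroid-properties}(i), and uniformity of $M_\cT|Z$ gives the opposing lower bound. The paper merely phrases the lower bound in terms of short circuits contained in $Z$ and splits into the cases $|A\cap Z|<k-1$ and $|A\cap Z|\geq k-1$ instead of using your unified parameter $m$; the content is identical.
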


\begin{proof}
Assume that $Z$ satisfies the hypotheses of the lemma, but that $Z$ is not a $k$-connected set.
Then, up to symmetry, there is a partition $(A,B)$ of $E(M)$ such that either (i)
$|A\cap Z|<k-1,|B\cap Z|\geq |A\cap Z|$ and $\lm(A)<|A\cap Z|$, or
(ii) $|A\cap Z|,|B\cap Z|\geq k-1$, and $\lm(A)<k-1$. Consider (i). If $A$ is $\cT$-small, then 
$r_{M_\cT}(A\cap Z)\leq \lm(A)<|A\cap Z|<k-1$. This implies that $A\cap Z$ contains a circuit 
of $M_\cT$ of size at most $k-2$, contradicting the fact that $M_\cT|Z\cong U_{k-1,|Z|}$.
We obtain the same contradiction in the case that $B$ is $\cT$-small. For (ii) we may assume
that $A$ is $\cT$-small. In this case we obtain the contradiction that $A\cap Z$ contains a 
circuit of $M_\cT$ of size at most $k-1$. Hence $Z$ is a $k$-connected set of $M$.
\end{proof}

On the other hand sufficiently large   $k$-connected sets give rise to tangles.

\begin{lemma}
\label{get-k-tangle}
Let $k\geq 3$ be an integer and let $Z$ be a $k$-connected set in the matroid $M$ such that
$|Z|\geq 3k-5$. Let $\cT_Z$ be the collection of subsets of $E(M)$ where $A\in \cT_Z$
if $\lm(A)\leq k-2$ and $|Z\cap A|\leq k-2$. Then the following hold.
\begin{itemize}
\item[(i)] $\cT_Z$ is a tangle of order $k$ in $M$.
\item[(ii)] $M_{\cT_Z}|Z\cong U_{k-1,|Z|}$.
\end{itemize}
\end{lemma}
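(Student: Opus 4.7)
The plan is to verify (i) by checking the four tangle axioms directly from the definition of $\cT_Z$, and then to deduce (ii) from Lemma~\ref{tangle-matroid-properties}(ii) by showing that every $(k-1)$-subset of $Z$ is $\cT_Z$-strong.

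For (i), axioms (T1) and (T4) are immediate: (T1) holds because membership in $\cT_Z$ explicitly requires $\lm(A)\leq k-2$, and (T4) follows from $|Z|\geq 3k-5\geq k-1$, which forces $|Z\cap(E(M)-\{e\})|\geq k-1>k-2$. The axiom (T2) is where I use that $Z$ is a $k$-connected set: if $\lm(A)\leq k-2$, then $\lm(A)\geq\min\{|A\cap Z|,|Z-A|,k-1\}$ gives $\min\{|A\cap Z|,|Z-A|\}\leq k-2$, so by the symmetry $\lm(A)=\lm(E(M)-A)$ one of $A,E(M)-A$ lies in $\cT_Z$. The key axiom is (T3): if $A,B,C\in\cT_Z$ covered $E(M)$, then they would cover $Z$, forcing $|Z|\leq 3(k-2)=3k-6$, contradicting $|Z|\geq 3k-5$. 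This is where the hypothesis $|Z|\geq 3k-5$ is essential.

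For (ii), I will show that every $(k-1)$-subset $A$ of $Z$ is $\cT_Z$-strong. If some $\cT_Z$-small set $B$ contained $A$, then $|Z\cap B|\geq|A|=k-1$, contradicting the requirement $|Z\cap B|\leq k-2$ in the definition of $\cT_Z$. By Lemma~\ref{tangle-matroid-properties}(ii), every such $A$ is therefore a basis of $M_{\cT_Z}$, so $M_{\cT_Z}|Z$ has rank $k-1$ with every $(k-1)$-subset independent. This is exactly the statement $M_{\cT_Z}|Z\cong U_{k-1,|Z|}$.

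There is no real obstacle here; the argument is a direct verification. The only delicate point is recognising that the inequality $|Z|\geq 3k-5$ is precisely what is needed to prevent three $\cT_Z$-small sets from covering $Z$ (and hence $E(M)$), which is exactly what (T3) demands. Everything else follows from unpacking the definition of a $k$-connected set and the definition of the tangle matroid.
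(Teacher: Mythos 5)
Your proof is correct and follows essentially the same direct verification as the paper: (T1), (T2), (T4) from the definition and the $k$-connectedness of $Z$, (T3) from the counting bound $3(k-2)<3k-5\leq|Z|$, and (ii) by observing that $(k-1)$-subsets of $Z$ are $\cT_Z$-strong (a step the paper dismisses as routine, which you have correctly filled in via Lemma~\ref{tangle-matroid-properties}(ii)).
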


\begin{proof}
It follows from the definition of $\cT_Z$ that (T1) holds. 
Say $(A,B)$ is a partition of $E(M)$ with $\lm(A)\leq k-2$. By the definition of $k$-connected set,
either $|A\cap Z|\leq k-2$ or $|B\cap Z|\leq k-2$. Hence either $A$ or $B$ is in $\cT_Z$ so that
(T2) holds. 
If $A,B,C\in\cT_Z$, then $|A\cap Z|,|B\cap Z|,|C\cap Z|\leq k-2$. Hence 
$|(A\cup B\cup C)\cap Z|\leq 3k-6<3k-5\leq |Z|$. Hence $A\cup B\cup C\neq E(M)$ so that
(T3) holds. 

Say $e\in E(M)$. Then $\lm(\{e\})\leq 1\leq k-2$, and $|\{e\}\cap Z|\leq 1\leq k-2$. Hence
$\{e\}\in\cT$ so that, by (T3), $E(M)-\{e\}\notin \cT_Z$ and (T4) holds. This proves (i). Part (ii) is routine.
\end{proof}

If $Z$ satisfies the hypotheses of Lemma~\ref{get-k-tangle}, then we say that the tangle
$\cT_Z$ is the $k$-tangle in $M$ {\em generated} by $Z$. All up we have

\begin{lemma}
\label{all-up}
Let $\cT$ be a tangle in the matroid $M$ of order $k$ and breadth $t$. Then $M$ contains a 
$t$-element $k$-connected set $Z$ such that $M_\cT|Z\cong U_{k-1,|Z|}$. 
Moreover, if $k\geq 3$ and $t\geq 3k-5$, then $\cT$ is generated
by $Z$.
\end{lemma}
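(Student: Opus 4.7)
The first part unpacks definitions. By the definition of breadth, there is a set $Z \subseteq E(M)$ with $|Z| = t$ such that $M_\cT|Z$ is a spanning uniform restriction of $M_\cT$. Since $r(M_\cT) = k-1$ by Theorem~\ref{tangle-matroid}, this forces $M_\cT|Z \cong U_{k-1,|Z|}$, and in particular $|Z| \geq k-1$. Lemma~\ref{get-k-con} then immediately yields that $Z$ is a $k$-connected set in $M$.

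Now assume $k \geq 3$ and $t \geq 3k-5$, and let $Z$ be as above. By Lemma~\ref{get-k-tangle}, $\cT_Z$ is a tangle of order $k$ in $M$, so it remains to show $\cT = \cT_Z$. For the containment $\cT_Z \subseteq \cT$, suppose $A \in \cT_Z$ but $A \notin \cT$. Then $\lm_M(A) \leq k-2$, and so (T2) applied to $\cT$ forces $E(M) - A \in \cT$. By Lemma~\ref{tangle-matroid-properties}(i), $r_{M_\cT}(E(M) - A) \leq \lm_M(E(M) - A) = \lm_M(A) \leq k-2$. On the other hand, $|Z \cap (E(M) - A)| \geq t - (k-2) \geq 2k-3 \geq k-1$, so $Z \cap (E(M) - A)$ contains a $(k-1)$-subset which is a basis of $M_\cT|Z \cong U_{k-1,|Z|}$ and hence has rank $k-1$ in $M_\cT$. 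This contradicts the rank bound on $E(M) - A$.

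For the reverse containment I would first observe that every tangle $\cT'$ of order $k \geq 3$ contains $\emptyset$: (T2) places $\emptyset$ or $E(M)$ in $\cT'$, and if $E(M) \in \cT'$, then (T3a) together with $\lm(\{e\}) \leq 1 < k-1$ would place $E(M) - \{e\}$ in $\cT'$, violating (T4). Hence (T3b) applied to the partition $(A, E(M) - A, \emptyset)$ implies that each of $\cT$ and $\cT_Z$ contains \emph{exactly} one of $A, E(M) - A$ whenever $\lm_M(A) \leq k-2$. Combined with $\cT_Z \subseteq \cT$, this forces $\cT = \cT_Z$. I expect the main subtlety to be precisely this uniqueness-of-orientation observation, which is what promotes one-sided containment of tangles into equality; everything else follows directly from the definition of breadth together with Lemma~\ref{tangle-matroid-properties}.
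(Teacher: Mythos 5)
Your proof is correct and is essentially the argument the paper intends: the paper states Lemma~\ref{all-up} without proof as an immediate consequence of Lemmas~\ref{get-k-con} and~\ref{get-k-tangle}, and your write-up supplies exactly the omitted details. In particular, closing the argument by noting that a one-sided containment of two order-$k$ tangles forces equality (since each contains exactly one side of every $(k-1)$-separation) is the right move, and your verification that $\cT_Z\subseteq\cT$ via the rank bound on $E(M)-A$ in $M_\cT$ is sound.
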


The connection between uniform submatroids of the tangle matroid and $k$-connected
sets outlined above clearly justifies the use of breadth as a parameter to measure the ``size''
of a tangle. 

\subsection*{More basic facts on tangle matroids}

Let $\cT$ be a tangle in a matroid $M$. 
Recall that a subset $A$ of $E(M)$ is $\cT$-weak if $A$ is contained in a $\cT$-small set. Note that 
$\cT$-weak sets can have arbitrarily high connectivity, so  $\cT$-weak sets are not 
necessarily $\cT$-small. To see this let $M$ be a matroid with two components $X$ and $Y$, each of which is 
highly connected.
Let $\cT$ be the tangle of order 2 that identifies the component $X$. Then every subset of
$Y$ is $\cT$-weak, even the ones with high connectivity. 
The next lemma follows immediately from the definitions.
By Theorem~\ref{tangle-matroid} if $\cT$ is a tangle of order $k$, then $r(M_\cT)=k-1$.

\begin{lemma}
\label{basic-stuff}
Let $\cT$ be a tangle of order $k$ in a matroid $M$ and suppose $A\subseteq E(M)$.  Then  
\begin{itemize}
\item[(i)] $A$  is $\cT$-weak
if and only if $r_{M_\cT}(A)<k-1$; and  
\item[(ii)] if $A$ is a proper flat of $M_\cT$, then $A$ is $\cT$-small and $r_{M_\cT}(A) = \lm_M(A)$.
\end{itemize}
\end{lemma}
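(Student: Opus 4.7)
\medskip
\noindent\textbf{Proof plan.} The plan is to prove (i) directly from Theorem~\ref{tangle-matroid}, and then to bootstrap (ii) from (i) together with Lemma~\ref{tangle-matroid-properties}(i).

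For (i), I would exploit the fact, furnished by Theorem~\ref{tangle-matroid}, that the maximal $\cT$-small sets are precisely the hyperplanes of $M_\cT$, and that $r(M_\cT)=k-1$. For the forward direction, if $A$ is $\cT$-weak then $A$ lies inside some $\cT$-small set, hence inside a maximal such set, hence inside a hyperplane of $M_\cT$; this gives $r_{M_\cT}(A)\le k-2<k-1$. For the reverse direction, if $r_{M_\cT}(A)<k-1=r(M_\cT)$ then $A$ extends to a hyperplane of $M_\cT$, which is a maximal $\cT$-small set, and hence $A$ is $\cT$-weak.

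For (ii), suppose $A$ is a proper flat of $M_\cT$. Then $r_{M_\cT}(A)\le k-2$, so part (i) shows that $A$ is $\cT$-weak. Lemma~\ref{tangle-matroid-properties}(i) then supplies a $\cT$-small set $B\supseteq A$ with $\lm_M(B)=r_{M_\cT}(A)$. I would then apply Lemma~\ref{tangle-matroid-properties}(i) a second time, now to $B$ itself (which is $\cT$-weak because it is $\cT$-small), obtaining $r_{M_\cT}(B)\le \lm_M(B)=r_{M_\cT}(A)$. Monotonicity of rank forces $r_{M_\cT}(B)=r_{M_\cT}(A)$, so $B$ lies in the closure of $A$ in $M_\cT$, which equals $A$ since $A$ is a flat. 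Hence $B=A$, showing that $A$ is itself $\cT$-small and that $\lm_M(A)=\lm_M(B)=r_{M_\cT}(A)$.

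The only step requiring any care is the closure argument concluding (ii): the hypothesis that $A$ is a flat is what upgrades the inequality coming from the minimum formula in Lemma~\ref{tangle-matroid-properties}(i) into the equality $\lm_M(A)=r_{M_\cT}(A)$. Everything else is a direct translation between the description of maximal $\cT$-small sets as hyperplanes of $M_\cT$ and the notion of $\cT$-weakness.
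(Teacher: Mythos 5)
Your proof is correct. The paper offers no argument for this lemma (it is stated as following ``immediately from the definitions''), and your derivation -- part (i) from the hyperplane description of maximal $\cT$-small sets in Theorem~\ref{tangle-matroid}, and part (ii) by applying Lemma~\ref{tangle-matroid-properties}(i) twice and using the flat hypothesis to force $B=A$ -- is exactly the natural way to fill in the omitted details.
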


Note that the converse of Lemma~\ref{basic-stuff}(ii) does not hold. Hall~\cite[Theorem 4.1, Corollary 4.2]{H15} proved the following result. Recall that a matroid $N$ is a {\em quotient} of a matroid $M$ if there exists a matroid
$M'$ and a subset $Z$ of $E(M')$ such that $M'\ba Z=M$ and $M'/Z=N$. 

\begin{lemma} 
\label{dennis2}
Let $\cT$ be a tangle in a matroid $M$. If $X \subseteq E(M)$, then 
$\cl_M(X) \subseteq \cl_{M_{\cT}}(X)$. Moreover, $M_{\cT}$ is a quotient of $M$.
\end{lemma}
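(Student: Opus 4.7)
The plan is to reduce both assertions to the single claim that every hyperplane of $M_\cT$ is a flat of $M$. Indeed, every proper flat of $M_\cT$ is the intersection of the hyperplanes of $M_\cT$ containing it, and arbitrary intersections of flats of $M$ are again flats of $M$; combined with the trivial fact that $E(M)$ is a flat of $M$, this gives that every flat of $M_\cT$ is a flat of $M$. By the standard characterisation of matroid quotients on a common ground set, this implication simultaneously yields $\cl_M(X) \subseteq \cl_{M_\cT}(X)$ for all $X \subseteq E(M)$ and shows that $M_\cT$ is a quotient of $M$.

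By Theorem~\ref{tangle-matroid}, the hyperplanes of $M_\cT$ are exactly the maximal $\cT$-small sets, so let $H$ be such a set and suppose, for a contradiction, that $e \in \cl_M(H) - H$. Since $e \in \cl_M(H)$, Lemma~\ref{up-down} gives $\lm_M(H \cup \{e\}) \leq \lm_M(H) \leq k-2$. The maximality of $H$ as a $\cT$-small set forces $H \cup \{e\} \notin \cT$, and (T2) then yields $E(M) - (H \cup \{e\}) \in \cT$. For $k \geq 3$ we have $\lm_M(\{e\}) \leq 1 \leq k-2$, so (T2) and (T4) together force $\{e\} \in \cT$; but then $H$, $\{e\}$ and $E(M) - (H \cup \{e\})$ are three members of $\cT$ whose union is $E(M)$, contradicting (T3). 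For the remaining case $k = 2$, the bound $\lm_M(H) \leq 0$ forces $H$ to be a separator of $M$, and separators are automatically flats, so $\cl_M(H) = H$ holds directly.

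The main obstacle is the $k \geq 3$ contradiction above; its crux is the observation that singletons are automatically $\cT$-small in a tangle of order at least three (thanks to (T4) forbidding their complements from lying in $\cT$), which is exactly what lets us apply (T3) to the three-set cover $\{H,\ \{e\},\ E(M) - (H \cup \{e\})\}$ of $E(M)$.
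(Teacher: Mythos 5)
The paper itself gives no proof of this lemma; it cites Hall~\cite[Theorem 4.1, Corollary 4.2]{H15}, so your self-contained derivation is a genuine alternative rather than a retread. Your reduction is sound: by the standard characterisation of quotients on a common ground set, both conclusions follow once every flat of $M_\cT$ is shown to be a flat of $M$, and since every proper flat of a matroid is an intersection of the hyperplanes containing it, it suffices to treat the hyperplanes of $M_\cT$, i.e.\ the maximal $\cT$-small sets. Your core argument for $k \ge 3$ is correct: if $e \in \cl_M(H) - H$, then $\lm_M(H \cup \{e\}) \le \lm_M(H) \le k-2$ by Lemma~\ref{up-down}, maximality of $H$ together with (T2) puts $E(M) - (H \cup \{e\})$ in $\cT$, and since (T2) and (T4) force singletons into $\cT$ when $k \ge 3$, the triple $\{H, \{e\}, E(M) - (H \cup \{e\})\}$ violates (T3).

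The one inaccuracy is your dispatch of $k = 2$: a separator of $M$ need not be a flat, because a loop of $M$ lying outside a separator $H$ belongs to $\cl_M(H) - H$. The repair is immediate and uses no new ideas. If $\lm_M(H) = 0$ and $e \in \cl_M(H) - H$, then a circuit witnessing $e \in \cl_M(H)$ lies in $H \cup \{e\}$ and, since every circuit of $M$ lies entirely inside or entirely outside the separator $H$, that circuit must be $\{e\}$; so $e$ is a loop, $\lm_M(\{e\}) = 0 \le k-2$, and your three-set cover argument applies verbatim. (Equivalently, the same cover argument shows that every maximal $\cT$-small set contains all loops of $M$, after which ``separators containing all loops are flats'' is true.) With that patch the proof is complete.
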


\begin{corollary}
\label{full-closed}
Let $\cT$ be a tangle in a matroid $M$ and let $A$ be a  flat of $M_\cT$. Then 
$A$ is fully closed in $M$. 
\end{corollary}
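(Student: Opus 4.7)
The plan is to split the statement into showing $A$ is closed and showing $A$ is coclosed, using Lemma~\ref{dennis2} in both cases.

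For closure, this is immediate. Since $A$ is a flat of $M_\cT$, we have $\cl_{M_\cT}(A)=A$. By Lemma~\ref{dennis2}, $\cl_M(A)\subseteq \cl_{M_\cT}(A)=A$, so $A$ is closed in $M$.

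For coclosure, the key observation is that $\cT$ is also a tangle in $M^*$ (as the paper remarks immediately before Lemma~\ref{basic-stuff}, tangles depend only on the connectivity function, which is self-dual). Moreover, the construction of the tangle matroid in Theorem~\ref{tangle-matroid} is phrased entirely in terms of the $\cT$-small sets (its hyperplanes are the maximal $\cT$-small subsets), and the notion of $\cT$-small is defined using $\lm_M = \lm_{M^*}$. Hence $M_\cT = (M^*)_\cT$. Now apply Lemma~\ref{dennis2} to the matroid $M^*$ with its tangle $\cT$: we obtain $\cl_{M^*}(A)\subseteq \cl_{(M^*)_\cT}(A)=\cl_{M_\cT}(A)=A$. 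That is, $\cl^*_M(A)\subseteq A$, so $A$ is coclosed in $M$.

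Combining the two inclusions gives $\cl_M(A)=\cl^*_M(A)=A$, i.e., $A$ is fully closed in $M$. The only substantive step is the duality observation $M_\cT = (M^*)_\cT$; once that is noted, the corollary falls straight out of Lemma~\ref{dennis2} applied to $M$ and to $M^*$. No obstacle is anticipated beyond confirming that the tangle-matroid construction is manifestly self-dual in this sense.
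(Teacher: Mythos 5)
Your proof is correct and follows exactly the paper's argument: closure is immediate from Lemma~\ref{dennis2}, and coclosure follows by applying the same lemma to $M^*$, using the facts that $\cT$ is a tangle in $M^*$ and $M_\cT=(M^*)_\cT$ (the latter identity is also noted explicitly in the paper just before Lemma~\ref{freer}). No issues.
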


\begin{proof}
Say $A$ is a flat of $M_\cT$. Then $A$ is closed in $M$ by Lemma~\ref{dennis2}. The fact
that $A$ is also coclosed follows from the same argument and the fact that $\cT$ is also a tangle in 
$M^*$.
\end{proof}

A matroid $M$ is {\em round} if its ground set cannot be covered by
two hyperplanes. Equivalently, $M$ is round if, whenever $(A,B)$ is a partition of $E(M)$,
 either $A$ or $B$ is spanning. The fact that the tangle matroid is round is an immediate consequence of 
 Theorem~\ref{dennis}. 

\begin{corollary}
\label{round}
Let $\cT$ be a tangle in a matroid $M$. Then  $M_\cT$ is round.
\end{corollary}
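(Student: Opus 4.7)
The plan is to derive roundness of $M_\cT$ directly from Theorem~\ref{dennis}. I would argue by contradiction: suppose $M_\cT$ is not round, so that there are hyperplanes $H_1$ and $H_2$ of $M_\cT$ with $H_1 \cup H_2 = E(M_\cT)$. The goal is then to exhibit three hyperplanes whose union is $E(M_\cT)$, contradicting the characterisation in Theorem~\ref{dennis}.

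First I would dispose of the trivial case $M_\cT = U_{1,1}$. In $U_{1,1}$ the single element is a non-loop, so the only hyperplane is $\emptyset$, and no union of hyperplanes can cover the singleton ground set; hence $U_{1,1}$ is round vacuously, and in particular the supposition that $M_\cT$ is not round forces $M_\cT \neq U_{1,1}$. Since $M_\cT$ is a tangle matroid (by definition, via Theorem~\ref{tangle-matroid}), Theorem~\ref{dennis} now applies and asserts that no three hyperplanes of $M_\cT$ have union $E(M_\cT)$. However, the triple $H_1, H_2, H_1$ is a list of three hyperplanes whose union equals $H_1 \cup H_2 = E(M_\cT)$, producing the required contradiction.

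Because the argument is essentially a one-line deduction from Theorem~\ref{dennis}, there is no serious obstacle; the only minor point to mention is the edge case $M_\cT = U_{1,1}$, which is handled by the observation above. If one prefers to interpret ``three hyperplanes'' in Theorem~\ref{dennis} as three distinct hyperplanes, the argument goes through unchanged provided one checks that $M_\cT$ has at least three hyperplanes, which follows from $r(M_\cT) = k - 1 \geq 1$ together with the existence of the two distinct hyperplanes $H_1$ and $H_2$ and the round-avoidance assumption; in any reasonable reading the conclusion is immediate.
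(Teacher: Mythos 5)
Your proof is correct and takes the same route as the paper, which simply observes that roundness is an immediate consequence of Theorem~\ref{dennis}: two hyperplanes covering $E(M_\cT)$ give three such hyperplanes under the intended reading of that theorem (repetition allowed, exactly as in axiom (T3)). Your treatment of the $U_{1,1}$ case is a harmless extra precaution, though note that your parenthetical fallback for the ``distinct hyperplanes'' reading is not airtight as stated (a rank-$2$ matroid with exactly two points shows that two distinct covering hyperplanes need not force a third), so it is the with-repetition reading that should be invoked.
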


\begin{lemma}
\label{3-con}
Let $\cT$ be a tangle of order at least $3$ in  a $3$-connected matroid $M$. Then $M_\cT$ is 
$3$-connected.
\end{lemma}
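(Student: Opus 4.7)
The plan is to verify directly that $M_\cT$ admits no $k$-separation for $k \in \{1, 2\}$. Let $n$ denote the order of $\cT$; by hypothesis $n \geq 3$, and Theorem~\ref{tangle-matroid} gives $r(M_\cT) = n - 1 \geq 2$. Two tools drive the argument. First, $M_\cT$ is round (Corollary~\ref{round}), so in any partition $(A, B)$ of $E(M)$, at least one of $A, B$ spans $M_\cT$. Second, Lemma~\ref{basic-stuff}(i) promotes a $\cT$-weak subset $X$ to a $\cT$-small superset $B'$ with $\lm_M(B') = r_{M_\cT}(X)$; this is the bridge that converts low rank in $M_\cT$ into low $\lm_M$, where the $3$-connectivity of $M$ can be brought to bear.

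Suppose $(A, B)$ is a $2$-separation of $M_\cT$ with $|A|, |B| \geq 2$, so $\lm_{M_\cT}(A) \leq 1$. By roundness we may assume $r_{M_\cT}(A) = n - 1$, whence $r_{M_\cT}(B) = \lm_{M_\cT}(A) \leq 1 < n - 1$. Lemma~\ref{basic-stuff}(i) then yields a $\cT$-small set $B' \supseteq B$ with $\lm_M(B') \leq 1$. Since $|B'| \geq |B| \geq 2$, the $3$-connectivity of $M$ forces $|E(M) - B'| \leq 1$. But a $\cT$-small set cannot equal $E(M)$ (applying (T3) with all three sets equal to $E(M)$ rules this out), nor can it equal $E(M) - \{e\}$ (by (T4)), so no such $2$-separation exists. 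The $1$-separation case runs identically: if $(A, B)$ is a $1$-separation with $|A|, |B| \geq 1$, then by roundness we may again assume $r_{M_\cT}(A) = n - 1$, giving $r_{M_\cT}(B) = 0$, so every element of $B$ is a loop of $M_\cT$. Feeding any singleton $\{y\} \subseteq B$ into Lemma~\ref{basic-stuff}(i) produces a $\cT$-small set $B' \ni y$ with $\lm_M(B') = 0$; since $M$ is connected and $E(M) \notin \cT$, this is impossible.

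The only subtlety is dealing with degenerately small $M$, where $3$-connectivity does not constrain $\cT$-small sets. However, a direct check of the tangle axioms shows that no matroid with $|E(M)| \leq 3$ admits a tangle of order at least $3$: (T2) and (T4) force every singleton into $\cT$, but then the (at most three) singletons cover $E(M)$, contradicting (T3). Hence $|E(M)| \geq 4$, and the argument above proves that $M_\cT$ is $3$-connected.
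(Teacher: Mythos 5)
Your proof is correct and follows essentially the same route as the paper's: roundness of $M_\cT$ forces one side of any putative $1$- or $2$-separation to be spanning, the other side then has rank at most $1$ in $M_\cT$, and Lemma~\ref{tangle-matroid-properties}(i) (which is the statement you actually need here, rather than Lemma~\ref{basic-stuff}(i), which only certifies that the set is $\cT$-weak) converts this into a $\cT$-small set giving a low-order separation of $M$, contradicted by $3$-connectivity together with (T3) and (T4). The paper packages the first step as the known fact that a round matroid of rank at least two has a $3$-connected simplification and then only needs to exclude loops and nontrivial parallel classes, but the underlying argument is the same.
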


\begin{proof}
Since $\cT$ has order at least 3, we have $r(M_\cT)\geq 2$. By Corollary~\ref{round}, 
$M_\cT$ is round. Hence $\si(M_\cT)$ is $3$-connected. Thus $\si(M_\cT)$ has at least three elements. 
Assume that $M_\cT$ has a set $X$ that is a loop or a nontrivial parallel class. Then $M$ has at least four elements. Thus, as $M$ is $3$-connected, it is simple. Then, since $r_{M_{\cT}}(X) \in \{0,1\}$, we have 
$\lm_M(B) \in \{0,1\}$ for some $B$ in $\cT$ such that $X\subseteq B$. As $B \in \cT$, we see that $|E(M) - B| \ge 2$, 
so $M$ is not $3$-connected, a contradiction.
\end{proof}

A set $X$ in a matroid $M$ is {\em solid} if there is no partition $\{A,B\}$ of $X$ with 
$\lm(A),\lm(B)<\lm(X)$.  Note that an exactly
$3$-separating set $X$ in a 3-connected matroid is solid if and only if $|X|\geq 3$.
The next lemma works for any 3-connected matroid, but it is the application for tangle matroids
that we need.

\begin{lemma}
\label{lines}
Let $\cT$ be a tangle of order at least $4$ in a $3$-connected matroid $M$. Let
$F$ be a solid proper flat of $M_\cT$ of rank at least two, and let $L$ be a solid rank-$2$ flat of
$M_\cT$ that is not contained in $F$. Then $|F\cap L|\leq 1$. Moreover, if $a\in F\cap L$,
then 
$a\in\cl_M(F-\{a\})$ and $a\in\cl_M(L-\{a\})$.
\end{lemma}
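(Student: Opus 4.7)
The plan is to tackle the two assertions in order. For $|F \cap L| \leq 1$, the argument is short: $F \cap L$ is a flat of $M_\cT$ (an intersection of flats), and since $L \not\subseteq F$, it is a proper subflat of $L$, so it has $M_\cT$-rank at most $r_{M_\cT}(L) - 1 = 1$. By Lemma~\ref{3-con}, $M_\cT$ is $3$-connected, and its proof shows that $M_\cT$ is in fact simple; in a simple matroid, a flat of rank at most one contains at most one element, giving $|F \cap L| \leq 1$.

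For the moreover part, suppose $a \in F \cap L$. I will sketch the proof that $a \in \cl_M(L - \{a\})$; the proof of $a \in \cl_M(F - \{a\})$ is analogous, since $F$ too is a solid proper flat of $M_\cT$ of rank at least $2$. Assume for contradiction that $a \notin \cl_M(L - \{a\})$. Using $\lm_M(L) = r_{M_\cT}(L) = 2$ (Lemma~\ref{basic-stuff}(ii)) and Lemma~\ref{up-down}, the only possibilities are $\lm_M(L - \{a\}) \in \{1, 2\}$ (the value $3$ would force $a \in \cl_M(L - \{a\})$, which is ruled out). The value $1$ yields a $2$-separation of $M$ (using $|L| \geq 3$ since $L$ is solid of rank $2$, and $|E(M) \setminus L| \geq 1$ since $L$ is a proper flat), contradicting $3$-connectivity. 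So $\lm_M(L - \{a\}) = 2$, and Lemma~\ref{up-down}(ii) then gives $a \in \cl^*_M(L - \{a\})$, which furnishes a cocircuit $C^*$ of $M$ with $a \in C^* \subseteq L$.

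Next, orthogonality forces $a \notin \cl_M(F - \{a\})$: any circuit $C$ of $M$ with $a \in C \subseteq F$ would satisfy $C \cap C^* \subseteq F \cap L = \{a\}$, violating orthogonality. Running the analogous reasoning with $F$ in place of $L$---using that $F$ is solid of rank $r = r_{M_\cT}(F) \geq 2$ to conclude $r_{M_\cT}(F - \{a\}) = r$, then Lemma~\ref{basic-stuff}(i) to force $\lm_M(F - \{a\}) = r$ (noting $F - \{a\}$ is $\cT$-small since Lemma~\ref{up-down} combined with $a \notin \cl_M(F - \{a\})$ gives $\lm_M(F - \{a\}) \leq r \leq k - 2 < k - 1$), and Lemma~\ref{up-down}(ii) to deduce $a \in \cl^*_M(F - \{a\})$---yields a cocircuit $C^{**}$ of $M$ with $a \in C^{**} \subseteq F$. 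Since $C^* \cap C^{**} \subseteq F \cap L = \{a\}$, cocircuit elimination produces a cocircuit $D^*$ of $M$ with $D^* \subseteq (C^* \cup C^{**}) - \{a\} \subseteq (L \cup F) - \{a\}$.

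The main obstacle, and the crux of the proof, is to extract a contradiction from the existence of such a $D^*$. The plan is to exploit that $E(M) \setminus D^*$ is a hyperplane of $M$ containing $\{a\} \cup (E(M) \setminus (L \cup F))$, combined with the bound $\lm_M(L \cup F) \leq r + 1$ (from submodularity and $\lm_M(\{a\}) = 1$), the roundness of $M_\cT$ (Corollary~\ref{round}), and the inclusion $\cl_M \subseteq \cl_{M_\cT}$ from Lemma~\ref{dennis2}. The goal is to reach a configuration contradicting either the rank of a hyperplane in $M$ or, via Theorem~\ref{dennis}, the structure of $M_\cT$ as a tangle matroid.
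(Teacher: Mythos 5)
Your first part is correct and matches the paper's (which gets simplicity of $M_\cT$ from Lemma~\ref{tangle-matroid-properties}(iii) rather than from Lemma~\ref{3-con}). Your reductions in the second part are also sound: assuming $a\notin\cl_M(L-\{a\})$, solidity and Lemma~\ref{up-down} give $a\in\cl^*_M(L-\{a\})$ and hence a cocircuit $C^*$ with $a\in C^*\subseteq L$; orthogonality then kills any circuit through $a$ inside $F$, so $a\notin\cl_M(F-\{a\})$; and the same argument (you do not need the detour through $r_{M_\cT}(F-\{a\})$ --- solidity of $F$ in $M$ already gives $\lm_M(F-\{a\})\ge\lm_M(F)$) yields $C^{**}$ with $a\in C^{**}\subseteq F$, and then $D^*$. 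The genuine gap is the final step: you never extract a contradiction from $D^*$, you only announce a plan. That plan cannot succeed, because the configuration you have reached is realizable. Take a large $4$-connected bipartite graph $H$ with parts $A$ and $B$, and form $G$ by adding adjacent vertices $v$ and $w$, with $v$ also joined to $a_1,a_2\in A$ and $w$ also joined to $b_1,b_2\in B$. In $M(G)$, with $\cT$ its order-$4$ tangle, the stars $L$ of $v$ and $F$ of $w$ are fully closed triads and maximal $\cT$-small $3$-separating sets, hence solid rank-$2$ flats of $M_\cT$ meeting exactly in $a=vw$. Both are independent in $M(G)$, so $a$ lies in neither $\cl_M(L-\{a\})$ nor $\cl_M(F-\{a\})$; here $C^*=L$, $C^{**}=F$, and $D^*$ is the bond separating $\{v,w\}$ from $H$, with no contradiction anywhere. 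So the ``moreover'' clause itself fails when $L$ falls into the coguts case of Lemma~\ref{interior-decoration} (a coclosed independent cocircuit), and no argument from $D^*$, roundness, or Theorem~\ref{dennis} can close your gap.

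For comparison, the paper disposes of the second part in three lines: it asserts that $|L|\ge 3$ forces $a\in\cl_M(L-\{a\})$, deduces $a\in\cl_M(E(M)-F)$, and then gets $a\in\cl_M(F-\{a\})$ from the solidity of $F$. The second and third steps are fine, but the first assertion is exactly the point your attempt (correctly) could not establish, and it fails in the example above. So the obstruction you hit is real rather than a failure of technique: as stated, the lemma needs an extra hypothesis ruling out the coguts case (for instance $\guts_M(L)\neq\emptyset$, or $L$ not a cocircuit of $M$) before either your route or the paper's can be completed.
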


\begin{proof}
By Lemma~\ref{tangle-matroid-properties}(iii), $M_\cT$ is simple. Hence, as $L \not \subseteq F$, we see that $|F \cap L| \le 1$. 
By Lemma~\ref{dennis2}, both $F$ and $L$ are flats of $M$. 
For 
$a\in F\cap L$, since $L$ is solid flat of $M_\cT$, it follows that $|L| \ge 3$, so $a\in\cl_M(L-\{a\})$. Thus $a\in\cl_M(E(M) - F)$, so 
$a\in\cl_M(F-\{a\})$ otherwise $F$ is not solid. 
\end{proof}

\section{Tangles Generated in Minors} 

In what follows, we freely use the next elementary result (see, for example, \cite[Corollary 8.2.5]{O11}).

\begin{lemma}
\label{low-con}
Let $N$ be a minor of a matroid $M$ and $A\subseteq E(M)$. Then $\lm_N(A\cap E(N))\leq \lm_M(A)$.
\end{lemma}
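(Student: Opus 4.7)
The plan is to reduce to the single-element case by induction on $|E(M)-E(N)|$. Since $N$ is obtained from $M$ by a sequence of single-element deletions and contractions, and since the connectivity function of a further minor $N'$ of $N$ relates to $\lm_M$ via $\lm_{N'}((A\cap E(N))\cap E(N'))=\lm_{N'}(A\cap E(N'))$, it suffices to establish the inequality when $N=M\backslash e$ or $N=M/e$ for some $e\in E(M)$, for both the cases $e\in A$ and $e\notin A$.

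For the deletion case with $e\notin A$, use $r_{M\backslash e}(S)=r_M(S)$ for $S\subseteq E(M)-\{e\}$ to expand $\lm_{M\backslash e}(A)=r_M(A)+r_M(E(M)-\{e\}-A)-r_M(E(M)-\{e\})$. Then
$$\lm_M(A)-\lm_{M\backslash e}(A)=\bigl[r_M(E(M)-A)-r_M(E(M)-\{e\}-A)\bigr]-\bigl[r(M)-r(M\backslash e)\bigr].$$
Each bracket lies in $\{0,1\}$, and if the second equals $1$ then $e$ is a coloop, which forces the first also to equal $1$; hence the difference is $\ge 0$. The case $e\in A$ is symmetric, interchanging the roles of $A$ and $E(M)-A$ and applying the same reasoning.

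For contraction with $e\notin A$, use $r_{M/e}(S)=r_M(S\cup\{e\})-r_M(\{e\})$ to obtain
$$\lm_M(A)-\lm_{M/e}(A)=r_M(A)+r_M(\{e\})-r_M(A\cup\{e\}),$$
which is $\ge 0$ since adding one element raises the rank by at most $r_M(\{e\})$. The case $e\in A$ is analogous, with the same expression in terms of $E(M)-A$ in place of $A$. Combining, each single-element step is nonincreasing in $\lm$, and the inductive step then yields $\lm_N(A\cap E(N))\le\lm_M(A)$. The proof is essentially bookkeeping: there is no conceptual obstacle, since the result is a direct consequence of the rank formulas for deletion and contraction together with the monotonicity of rank.
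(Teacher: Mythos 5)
Your proof is correct. Note, however, that the paper offers no proof of this lemma at all: it is stated as a well-known fact with a citation to \cite[Corollary 8.2.5]{O11}, so there is no in-paper argument to compare against. Your element-by-element reduction is a sound and self-contained way to verify it: the commutation identity $(A\cap E(N))\cap E(N')=A\cap E(N')$ makes the induction compose, each of the four single-element cases is handled correctly (in the deletion case the key observation that a unit drop in $r(M\backslash e)$ forces $e$ to be a coloop, hence a matching unit drop in $r_M(E(M)-\{e\}-A)$, is exactly right, and the $e\in A$ cases do follow from the symmetry $\lm(X)=\lm(E(M)-X)$). For reference, Oxley's own derivation treats a general minor $M\backslash Z/Z'$ in one step by expressing $\lm_N(A\cap E(N))$ as $\lm_M(A)$ minus explicitly nonnegative modular-defect terms; that route gives a quantitative formula for the drop, whereas yours trades that precision for a shorter, purely monotonicity-based verification. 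Either is acceptable here.
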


Let $N$ be a minor of a matroid $M$ and let $\cT_N$ be a tangle of order $k$ in $N$. Now
let $\cT_M$ be the collection of all sets $A$ of $E(M)$ such that $\lm_M(A)<k-1$, and
$A\cap E(N)\in \cT_N$. The next lemma follows immediately from the definitions \cite[Lemma 5.1]{GGW09}.

\begin{lemma}
\label{induce-up}
$\cT_M$ is a tangle of order $k$ in $M$.
\end{lemma}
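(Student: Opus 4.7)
The plan is to verify directly that $\cT_M$ satisfies each of the four tangle axioms (T1)--(T4) of order $k$. The guiding observation is that intersection with $E(N)$ transports the relevant conditions from $\cT_M$ back to $\cT_N$, where the axioms hold by assumption. The only matroid-theoretic input needed beyond the definitions is Lemma~\ref{low-con}, which controls how the connectivity function behaves under passage to a minor.

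Axioms (T1) and (T3) are essentially free. Axiom (T1) is built into the definition of $\cT_M$, which only admits sets $A$ with $\lm_M(A)<k-1$. For (T3), suppose for a contradiction that $A,B,C\in\cT_M$ and $A\cup B\cup C=E(M)$; intersecting with $E(N)$ produces three members of $\cT_N$ whose union is $E(N)$, contradicting (T3) for $\cT_N$.

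For (T2), let $A\subseteq E(M)$ with $\lm_M(A)<k-1$. By Lemma~\ref{low-con}, $\lm_N(A\cap E(N))\leq \lm_M(A)<k-1$, so (T2) applied to $\cT_N$ places either $A\cap E(N)$ or $(E(M)-A)\cap E(N)$ in $\cT_N$. Since $\lm_M(E(M)-A)=\lm_M(A)<k-1$, the definition of $\cT_M$ then puts the corresponding one of $A$ or $E(M)-A$ into $\cT_M$.

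Axiom (T4) is the only step requiring a moment of care, because the element $e$ may or may not lie in $E(N)$. If $e\in E(N)$, then $(E(M)-\{e\})\cap E(N)=E(N)-\{e\}$, which is not in $\cT_N$ by (T4) for $\cT_N$, so $E(M)-\{e\}\notin\cT_M$. If instead $e\notin E(N)$, then $(E(M)-\{e\})\cap E(N)=E(N)$; here we use the auxiliary observation that $E(N)\notin\cT_N$, which follows from (T3) for $\cT_N$ applied with $A=B=C=E(N)$. In either case $E(M)-\{e\}\notin\cT_M$. There is no genuine mathematical obstacle in this lemma; the argument is careful bookkeeping between $M$ and $N$, which is why the authors can dispatch it ``immediately from the definitions.''
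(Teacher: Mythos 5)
Your proof is correct, and it is exactly the routine verification the paper has in mind when it says the lemma ``follows immediately from the definitions'' (citing \cite[Lemma 5.1]{GGW09}); the use of Lemma~\ref{low-con} for (T2) and the observation that $E(N)\notin\cT_N$ for the case $e\notin E(N)$ in (T4) are the right details to check.
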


We say that $\cT_M$ is the {\em tangle in $M$ induced by $\cT_N$}. In the other direction, things
are not as smooth. If $\cT_M$ is a tangle of order $k$ in $M$, then it may be that there is
more than one tangle of order $k$ in $N$ that induces $\cT_M$, or there may be no tangle
of order $k$ in $N$ that induces $\cT_M$. 

Let $\cS$ be a collection of $(k-1)$-separating subsets of a matroid $M$. 
We say $\cS$ {\em generates a tangle $\cT$ in $M$} if $\cT$ is the unique tangle of order $k$  for which $\cS \subseteq \cT$.
Let $N$ be a minor of a matroid $M$, let $\cT_M$ be a tangle in $M$ of order $k$, and let 
$\cT_N$ be a tangle in $N$. 
We say that $\cT_M$ {\em generates the tangle $\cT_N$ in $N$}
if $\cT_N$ is the unique tangle of order $k$ in $N$ that contains $\{A\cap E(N):A\in \cT_M\}$.
The next lemma follows from the definitions.

\begin{lemma}
\label{induce-down}
Let $\cT_M$ be a tangle of order $k$ in a matroid $M$, and let $N$ be a minor of $M$. If $\cT_M$ generates the
tangle $\cT_N$ in $N$, then $\cT_N$ induces $\cT_M$ in $M$.
\end{lemma}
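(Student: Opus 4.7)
The plan is to define the candidate $\cT_M' := \{A \subseteq E(M) : \lm_M(A) < k-1 \text{ and } A \cap E(N) \in \cT_N\}$, which is a tangle of order $k$ in $M$ by Lemma~\ref{induce-up}, and to show that $\cT_M = \cT_M'$.

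The easy half is the inclusion $\cT_M \subseteq \cT_M'$. For $A \in \cT_M$, (T1) gives $\lm_M(A) < k-1$, and the generating hypothesis supplies the inclusion $\{B \cap E(N) : B \in \cT_M\} \subseteq \cT_N$, so in particular $A \cap E(N) \in \cT_N$; hence $A \in \cT_M'$.

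The main step, where the tangle axioms carry the weight, is to upgrade this inclusion to equality. I will argue the general fact that any two tangles of the same order $k$ on the same matroid, one contained in the other, must coincide. To apply this here, take any $A \in \cT_M'$. Then $\lm_M(A) < k-1$ by (T1), so (T2) applied to $\cT_M$ forces either $A \in \cT_M$ or $E(M) - A \in \cT_M$. The second alternative would place both $A$ and $E(M) - A$ in $\cT_M'$, and then (T3) applied to the triple $(A, A, E(M) - A)$ yields the forbidden identity $A \cup A \cup (E(M) - A) = E(M)$. Hence $A \in \cT_M$, establishing $\cT_M' \subseteq \cT_M$ and therefore the desired equality $\cT_M = \cT_M'$.
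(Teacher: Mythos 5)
Your proof is correct, and it supplies the details that the paper elides (the paper simply asserts that this lemma ``follows from the definitions''). The key observation you isolate --- that two tangles of the same order on the same matroid, one contained in the other, must be equal, via (T2) and (T3) applied to the pair $A$, $E(M)-A$ --- is exactly the standard argument intended here, and your use of the generating hypothesis (only the containment $\{B\cap E(N):B\in\cT_M\}\subseteq\cT_N$ is needed, not uniqueness) is sound.
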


The next lemma enables us to focus on minors that arise from  deleting or contracting a single element.

\begin{lemma}
\label{transitive}
Let $N$ be a minor of a matroid $M$, and let $P$ be a minor of $N$. Let $\cT_M$, $\cT_N$, and
$\cT_P$ be tangles in $M$, $N$, and $P$, respectively. If $\cT_M$ generates $\cT_N$ in $N$, and
$\cT_N$ generates $\cT_P$ in $P$, then $\cT_M$ generates $\cT_P$ in $P$.
\end{lemma}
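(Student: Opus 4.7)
The plan is to verify both halves of the definition of generation: that $\cT_P$ itself contains $\{A\cap E(P):A\in\cT_M\}$, and that no other tangle of order $k$ in $P$ does. For existence I would chain the two given generations through Lemma~\ref{induce-down}. Since $\cT_M$ generates $\cT_N$, that lemma says $\cT_N$ induces $\cT_M$ in $M$, so $A\cap E(N)\in\cT_N$ for every $A\in\cT_M$. Similarly $\cT_P$ induces $\cT_N$ in $N$, so $B\cap E(P)\in\cT_P$ for every $B\in\cT_N$. Applying these in sequence to any $A\in\cT_M$ gives $A\cap E(P)=(A\cap E(N))\cap E(P)\in\cT_P$, which is the existence half.

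For uniqueness, suppose $\cT_P'$ is any tangle of order $k$ in $P$ with $\{A\cap E(P):A\in\cT_M\}\subseteq\cT_P'$. Using Lemma~\ref{induce-up} with the minor relation $P\preceq N$, form the tangle $\cT_N'$ of order $k$ in $N$ induced by $\cT_P'$, namely
$$\cT_N'=\{B\subseteq E(N):\lm_N(B)<k-1,\ B\cap E(P)\in\cT_P'\}.$$
The crucial step is to check that $\cT_N'$ contains $\{A\cap E(N):A\in\cT_M\}$: for $A\in\cT_M$, Lemma~\ref{low-con} gives $\lm_N(A\cap E(N))\leq\lm_M(A)<k-1$, and $(A\cap E(N))\cap E(P)=A\cap E(P)\in\cT_P'$ by hypothesis, so $A\cap E(N)\in\cT_N'$. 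The uniqueness clause of the hypothesis that $\cT_M$ generates $\cT_N$ then forces $\cT_N'=\cT_N$.

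With $\cT_N'=\cT_N$, the very definition of $\cT_N'$ says $B\cap E(P)\in\cT_P'$ for every $B\in\cT_N$, so $\cT_P'$ contains $\{B\cap E(P):B\in\cT_N\}$; the uniqueness clause from $\cT_N$ generating $\cT_P$ then forces $\cT_P'=\cT_P$, as required. The only conceptual step worth pausing on is this lift-and-descend move: one must apply the uniqueness from the upper generation at the intermediate matroid $N$ (to $\cT_N'$) rather than trying to compare $\cT_P'$ with $\cT_P$ directly on $P$. Everything else is bookkeeping with the definitions of \emph{generate} and \emph{induce} and the monotonicity of connectivity under minors.
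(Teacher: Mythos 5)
Your proof is correct, and for the uniqueness half it takes a genuinely different route from the paper's. The paper argues by contradiction entirely at the level of $P$: assuming a second tangle $\cT'$ contains $\{A\cap E(P):A\in\cT_M\}$, it finds a set in $\{B\cap E(P):B\in\cT_N\}$ outside $\cT'$ and then pulls that set back to $\cT_M$, using Lemma~\ref{induce-down} to assert that every member of $\cT_N$ has the form $A_0\cap E(N)$ with $A_0\in\cT_M$. You instead go upward: you lift the putative alternative $\cT_P'$ to a tangle $\cT_N'$ in $N$ via Lemma~\ref{induce-up}, check it contains $\{A\cap E(N):A\in\cT_M\}$ using Lemma~\ref{low-con}, and then apply the two uniqueness clauses in sequence, first at $N$ to get $\cT_N'=\cT_N$ and then at $P$ to get $\cT_P'=\cT_P$. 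Your version is direct rather than by contradiction, and it sidesteps the surjectivity assertion the paper leans on (that the map $A\mapsto A\cap E(N)$ from $\cT_M$ hits every member of $\cT_N$), which is the least transparent step of the published argument; the price is the extra construction of the auxiliary tangle $\cT_N'$, but Lemma~\ref{induce-up} makes that cost negligible. The existence half of your argument is essentially the same as the paper's observation that $\cS_{M,P}\subseteq\cS_{N,P}$.
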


\begin{proof}
Let $\cS_{M,N}=\{A\cap E(N):A\in\cT_M\}$, let 
$\cS_{M,P}=\{A\cap E(P):A\in\cT_M\}$, and let $\cS_{N,P}=\{A\cap E(P):A\in\cT_N\}$. 

\begin{sublemma}
\label{transitive1}
$\cS_{M,P}\subseteq \cS_{N,P}$.
\end{sublemma}

\begin{proof}
Say $Z\in \cS_{M,P}$. Then $Z=A\cap E(P)$ for some $A\in \cT_M$.
Now $A\cap E(N)\in \cS_{M,N}$. Hence $A\cap E(N)\in\cT_N$. Thus $A\cap E(P)\in \cS_{N,P}$,
that is, $Z\in \cS_{N,P}$.
\end{proof}

Since $\cS_{M,P}\subseteq \cS_{N,P}$ we know that every member of $\cS_{M,P}$ is 
$\cT_P$-small. If $\cT_P$ is the only tangle of order $k$ in $P$ with this property, then
$\cS_{M,P}$ generates $\cT_P$, that is, $\cT_M$ generates $\cT_P$ in $P$, as required.

Assume otherwise. Then there is a tangle $\cT'$ of order $k$ in $P$ such that $\cT'\neq \cT_P$ and
such that every member of $\cS_{M,P}$ is $\cT'$-small. If every member of $\cS_{N,P}$ is
$\cT'$-small, then we contradict the fact that $\cS_{N,P}$ generates $\cT_P$. Hence there is a member
$A'$ of $\cS_{N,P}$ such that $A'$ is not $\cT'$-small. Now $A' = A_1 \cap E(P)$ for some $A_1$ in $\cT_N$. By 
Lemma~\ref{induce-down}, $\cT_N$ induces $\cT_M$, so $A_1 = A_0 \cap E(N)$ for some $A_0$ in $\cT_M$. Thus $A' = A_0 \cap E(P)$, 
so $A' \in \cS_{M,P}$. Hence $A'$ is  $\cT'$-small, a contradiction. 
\end{proof}

Recall that, for a tangle $\cT$, a set $W$ is $\cT$-weak if $W \subseteq A$ for some set $A$ in $\cT$.

\begin{lemma}
\label{add-weak}
Let $\cT_M$ be a tangle of order $k$ in a matroid $M$ and let $N$ be a minor of $M$. Then  
$\cT_M$ generates the tangle $\cT_N$ in $N$ if and only if the collection $\cW$ defined by
$\cW=\{W\subseteq E(N): \lm_N(W)\leq k-2;  \text{~$W$ is $\cT_M$-weak in $M$}\}$ generates $\cT_N$.
\end{lemma}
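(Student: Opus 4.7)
The plan is to reduce the whole statement to a single, tidy bi-implication: for any tangle $\cT'$ of order $k$ in $N$,
$$\cT' \supseteq \{A\cap E(N):A\in\cT_M\} \iff \cT' \supseteq \cW.$$
Once this is established, both ``$\cT_M$ generates $\cT_N$'' and ``$\cW$ generates $\cT_N$'' unfold (by the definition of ``generates'') into the statement that $\cT_N$ is the unique tangle of order $k$ containing the same family, and the desired equivalence is immediate. In particular, if either side fails to pin down a unique tangle then so does the other, so no auxiliary hypotheses are required.

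For the direction ($\supseteq \cW$ implies $\supseteq \{A\cap E(N):A\in\cT_M\}$), first observe that $\{A\cap E(N):A\in\cT_M\}\subseteq\cW$. Indeed, for $A\in\cT_M$ we have $\lm_M(A)\leq k-2$ by (T1), so Lemma~\ref{low-con} yields $\lm_N(A\cap E(N))\leq\lm_M(A)\leq k-2$, and $A\cap E(N)\subseteq A\in\cT_M$ makes $A\cap E(N)$ a $\cT_M$-weak subset of $E(N)$. Hence $A\cap E(N)\in\cW$, and any tangle containing $\cW$ automatically contains $\{A\cap E(N):A\in\cT_M\}$.

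For the other direction, suppose $\cT'$ is a tangle of order $k$ in $N$ with $\{A\cap E(N):A\in\cT_M\}\subseteq\cT'$, and let $W\in\cW$. By definition of $\cW$ there is $A\in\cT_M$ with $W\subseteq A$, so $W\subseteq A\cap E(N)\in\cT'$. Since $\lm_N(W)\leq k-2$, axiom (T3a) applied in $N$ to the containing $\cT'$-small set $A\cap E(N)$ gives $W\in\cT'$. Thus $\cW\subseteq\cT'$, finishing the bi-implication.

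There is no serious obstacle here: the whole argument is definition-chasing, combined with one use of Lemma~\ref{low-con} to certify that the intersections $A\cap E(N)$ are themselves in $\cW$, and one use of (T3a) to push ``$\cT_M$-weakness + small connectivity in $N$'' into membership of any candidate tangle that already contains $\{A\cap E(N):A\in\cT_M\}$. Conceptually, $\cW$ is just the downward closure in $N$ (among $(k-1)$-separating sets) of the natural restriction $\{A\cap E(N):A\in\cT_M\}$, and (T3a) says tangles are closed under this operation.
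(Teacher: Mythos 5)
Your proof is correct and follows essentially the same route as the paper's: you establish that $\{A\cap E(N):A\in\cT_M\}\subseteq\cW$ and that any order-$k$ tangle of $N$ containing $\{A\cap E(N):A\in\cT_M\}$ must contain all of $\cW$ via (T3a), so the two families are contained in exactly the same tangles and the generation statements coincide. The only difference is cosmetic: you spell out the use of Lemma~\ref{low-con} to certify $\lm_N(A\cap E(N))\le k-2$, which the paper leaves implicit.
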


\begin{proof} 
Observe that $\cW$ contains $\{A\cap E(N): A\in\cT_M\}$.
Now let 
$\cT$ be a tangle in $N$ that contains $\{A\cap E(N):A\in \cT_M\}$. 
Say $W\in \cW$. Then $W$ is $\cT_M$-weak, so there exists $A\in \cT_M$ such that $W\subseteq A$.
Now $A\cap E(N)\in \cT$ and $W\subseteq A\cap E(N)$. By (T3a),  $W\in \cT$. 

We deduce that every tangle in $N$ that contains $\{A\cap E(N):A\in\cT_M\}$
also contains $\cW$. Thus if $\cW$ generates $\cT_N$, then $\cT_M$ generates $\cT_N$ and conversely.
\end{proof} 

Let $\cT$ be a tangle of order $k$
in a matroid $M$ and let $N$ be a minor of $M$. 
We are hoping that $\cT$ will generate a tangle of order $k$ in $N$. The orientation of many
$(k-1)$-separating sets in $N$ will be determined by Lemma~\ref{add-weak} but there will
typically be plenty of others. In the case that $N$ is a single-element removal of $M$, we can
be precise about what these undetermined sets are.

\begin{lemma}
\label{ambiguous}
Let $\cT$ be a tangle of order $k$ in a matroid $M$ and let $a\in E(M)$. Let $(X,Y)$
be a partition of $E(M/a)$ such that $\lm_{M/a}(X)\leq k-2$. Then the following hold.
\begin{itemize}
\item[(i)] At most one of $X$ and $Y$ is $\cT$-weak.
\item[(ii)] If neither $X$ nor $Y$ is $\cT$-weak, then $\lm_M(X)=\lm_M(Y)=k-1$,
and $a\in\cl_M(X)\cap \cl_M(Y)$. 
\end{itemize}
\end{lemma}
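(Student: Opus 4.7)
The plan is to prove each part by direct appeal to the tangle axioms (T1)--(T4), together with the contraction formula for $\lambda$ given by Lemma~\ref{tikokino}.

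For part (i), I would argue by contradiction using (T3). Suppose both $X$ and $Y$ are $\cT$-weak, so there exist $A,B\in \cT$ with $X\subseteq A$ and $Y\subseteq B$. Since $X\cup Y=E(M)-\{a\}$, we have $A\cup B\supseteq E(M)-\{a\}$. If $a\in A\cup B$, then already $A\cup B=E(M)$ and the three $\cT$-small sets $A$, $B$, $A$ violate (T3). Otherwise $A\cup B=E(M)-\{a\}$, and it suffices to establish that $\{a\}\in \cT$, since then $A\cup B\cup \{a\}=E(M)$ contradicts (T3). For $k\geq 3$ we have $\lm_M(\{a\})\leq 1\leq k-2$, so (T2) places $\{a\}$ or $E(M)-\{a\}$ in $\cT$; the latter is excluded by (T4). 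The residual case $k=2$ is immediate: for a 2-tangle, $\cT$-small sets are unions of connected components, so $A\cup B=E(M)-\{a\}$ forces $\{a\}$ to be a component, and again (T2) with (T4) gives $\{a\}\in \cT$.

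For part (ii), suppose neither $X$ nor $Y$ is $\cT$-weak. First, Lemma~\ref{tikokino} gives $\lm_M(X)\leq \lm_{M/a}(X)+1\leq k-1$. I would then rule out the possibility $\lm_M(X)\leq k-2$. In that case $X$ is $(k-1)$-separating in $M$, so by (T2) either $X\in \cT$ or $E(M)-X\in \cT$. Note carefully that the complement of $X$ taken in $E(M)$, not in $E(M/a)$, is $Y\cup \{a\}$. If $X\in \cT$, then $X$ is trivially $\cT$-weak; if $Y\cup \{a\}\in \cT$, then $Y\subseteq Y\cup\{a\}$ makes $Y$ $\cT$-weak. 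Either conclusion contradicts our assumption, so $\lm_M(X)=k-1$. Since $\lm_{M/a}(X)\leq k-2<\lm_M(X)$, Lemma~\ref{tikokino} forces $a\in \cl_M(X)$. Using the symmetry $\lm_{M/a}(Y)=\lm_{M/a}(X)$ of the connectivity function on $M/a$, the same argument applied to $Y$ yields $\lm_M(Y)=k-1$ and $a\in \cl_M(Y)$.

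I do not anticipate a genuine obstacle here; the proof is a direct unpacking of (T2), (T3), (T4) together with Lemma~\ref{tikokino}. The only points that require care are the bookkeeping fact that the $M$-complement of $X$ is $Y\cup\{a\}$ (so that applying (T2) in part (ii) forces weakness of $Y$, not $X$), and the verification in part (i) that $\{a\}\in \cT$, which is uniform for $k\geq 3$ and needs a one-line structural remark when $k=2$.
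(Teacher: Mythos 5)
Your proof is correct and follows essentially the same route as the paper's: part (i) via a three-set cover contradicting (T3), with $\{a\}\in\cT$ obtained from (T2) and (T4) for $k\geq 3$ and a separate component argument for $k=2$; part (ii) by using (T2) to force $\lm_M(X),\lm_M(Y)\geq k-1$ (noting the $M$-complement of $X$ is $Y\cup\{a\}$) and then Lemma~\ref{tikokino} to pin down the closure condition. No gaps.
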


\begin{proof}
Assume that both $X$ and $Y$ are $\cT$-weak. Then there are $\cT$-small sets
$X'$ and $Y'$ containing $X$ and $Y$, respectively.  Now, provided $k \ge 3$, we see that $\{X',Y',\{a\}\}$ is a cover of $E(M)$
by $\cT$-small sets, a contradiction. If $k = 2$, then $\{X',Y'\}$ is a cover of $E(M)$
by $\cT$-small sets unless $X' = X$ and $Y' = Y$. In the exceptional case, each of $X$ and $Y$ is a union of components of $M$. 
Hence $\{a\}$ is a component of $M$ and, again, $\{X',Y',\{a\}\}$ is a cover of $E(M)$
by $\cT$-small sets.

Assume that neither $X$ nor $Y$ is $\cT$-weak. Say $\lm_M(X)\leq k-2$. Then
either $X$ or $Y\cup\{a\}$ is $\cT$-small, so  one of $X$ or $Y$ is $\cT$-weak.
Hence $\lm_M(X),\lm_M(Y)\geq k-1$. Since $\lm_{M/a}(X)=k-2$, we have
$\lm_M(X)=\lm_M(Y)=k-1$. If $a\notin\cl_M(X)\cap \cl_M(Y)$, then $\lm_{M/a}(X)=k-1$.
Hence $a\in\cl_M(X)\cap\cl_M(Y)$, as required.
\end{proof}

Partitions satisfying (ii) of Lemma~\ref{ambiguous} are the ones we have to focus on if we
are seeking a tangle generated by $\cT$ in $M/a$. 
The next lemma gives sufficient conditions that enable us to canonically orient all of the
$(k-1)$-separations of $M/a$. Such an orientation may fail to deliver a tangle, but, if it succeeds,
that tangle will be generated by $\cT$ in $M/a$. The lemmas that follow take advantage of a 
flat of the tangle matroid with certain properties. This may seem somewhat arbitrary.
We note that the flat arises in a natural way in applications later in the paper.

\begin{lemma}
\label{canon}
Let $\cT$ be a tangle of order $k$ in a matroid $M$, let $F$ be a flat of $M_\cT$ of rank $t\leq k-2$,
and let $a$ be an element of $F$ such that $F-\{a\}$ is solid in $M/a$ and 
$\lm_{M/a}(F-\{a\})=t$. Let $(X,Y)$
be a partition of $E(M/a)$ such that $\lm_{M/a}(X)=k-2$, and such that neither $X$ nor 
$Y$ is $\cT$-weak. Then, up to switching the labels of $X$ and $Y$,  the following hold where
$G=E(M)-F$.
\begin{itemize}
\item[(i)] $\lm_M(X),\lm_M(Y)=k-1$ and $a\in\cl_M(X) \cap \cl_M(Y)$; 
\item[(ii)] $\lm_{M/a}(F\cap X)\geq t$ and $\lm_{M/a}(F\cap Y)<t$;  
\item[(iii)] $\lm_{M/a}(G\cap X)> k-2$ 
and $\lm_{M/a}(G\cap Y)\leq k-2$; and 
\item[(iv)] $G\cap Y$ is
$\cT$-small.
\end{itemize}
Moreover, $k \ge 3$ and if $\cT'$ is a tangle in $M/a$ that induces $\cT$, then $Y$ is $\cT'$-small.
\end{lemma}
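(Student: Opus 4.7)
Part (i) is an immediate application of Lemma \ref{ambiguous}(ii) to the partition $(X,Y)$, since $\lm_{M/a}(X)=k-2$ and neither side is $\cT$-weak.

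For parts (ii)--(iv), the solidity of $F-\{a\}$ in $M/a$ is pivotal: the partition $(F\cap X,F\cap Y)$ of $F-\{a\}$ cannot have both parts of $\lm_{M/a}$-value less than $t$, so I may assume, up to swapping the labels of $X$ and $Y$, that $\lm_{M/a}(F\cap X)\geq t$. Applying Lemma \ref{set-diff} in $M/a$ with $A=Y$ and $B=F-\{a\}$ yields
\[\lm_{M/a}(G\cap Y)+\lm_{M/a}(F\cap X)\leq \lm_{M/a}(Y)+\lm_{M/a}(F-\{a\})=(k-2)+t,\]
so $\lm_{M/a}(G\cap Y)\leq k-2$. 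Once the inequality $\lm_{M/a}(G\cap X)\geq k-1$ from (iii) is established, the symmetric application of Lemma \ref{set-diff} with $A=X$ and $B=F-\{a\}$ forces $\lm_{M/a}(F\cap Y)\leq t-1$, completing (ii).

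The main obstacle is proving $\lm_{M/a}(G\cap X)\geq k-1$ together with the fact that $G\cap Y$ is $\cT$-small in $M$. I would argue by contradiction: suppose $\lm_{M/a}(G\cap X)\leq k-2$. Since $F$ is a proper flat of $M_\cT$, Lemma \ref{basic-stuff}(ii) shows $F$ is $\cT$-small in $M$, and Lemma \ref{tikokino} gives $\lm_M(G\cap X),\lm_M(G\cap Y)\leq k-1$. I would then case-split on whether each of these $\lm_M$-values equals $k-1$ or is at most $k-2$. When the value equals $k-1$, Lemma \ref{tikokino} places $a$ in the closure of the set, and Lemma \ref{up-down} controls the effect on $\lm_M$ of adjoining $a$. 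In each case I would exhibit three $\cT$-small sets---namely $F$ together with suitably augmented versions of $G\cap X$ and $G\cap Y$, adjoining $a$ when necessary---whose union is $E(M)$, contradicting (T3). The $\cT$-strongness of $X$ and $Y$ is essential for selecting the $\cT$-small side of each orientation, since the complement in each case contains either $X$ or $Y$. The delicate subcase is when adjoining $a$ fails to reduce $\lm_M$ (i.e., $a\in\cl_M$ but $a\notin\cl_M^*$); here one exploits $a\in\cl_M(X)\cap\cl_M(Y)$ from (i). The same case analysis yields $\lm_M(G\cap Y)\leq k-2$, whence $G\cap Y$ is orientable by $\cT$; its complement $F\cup(G\cap X)$ contains the $\cT$-strong set $X$, so the complement is $\cT$-large, forcing $G\cap Y$ to be $\cT$-small and proving (iv).

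For the moreover clause, if $k=2$ then $t=0$ forces $F$ to consist of the loops of $M_\cT$, which by the discussion of tangles of order $2$ is the complement in $E(M)$ of the distinguished component $C$ captured by $\cT$. The $\cT$-strongness of $X$ and $Y$ together with $\lm_{M/a}(X)=\lm_{M/a}(Y)=0$ then forces $C$ to sit inside each of $X$ and $Y$, which is impossible; hence $k\geq 3$. If $\cT'$ is a tangle of order $k$ in $M/a$ inducing $\cT$, the inducing relation and $F\in\cT$ give $F-\{a\}\in\cT'$, while $G\cap Y\in\cT$ from (iv) gives $G\cap Y\in\cT'$. If $X\in\cT'$, then $F-\{a\}$, $X$, and $G\cap Y$ are three $\cT'$-small sets whose union is $E(M/a)$, contradicting (T3) for $\cT'$; therefore $Y\in\cT'$.
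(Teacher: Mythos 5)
Most of your proposal tracks the paper's argument: part (i) via Lemma~\ref{ambiguous}, the choice of labels via solidity of $F-\{a\}$, the bound $\lm_{M/a}(G\cap Y)\le k-2$ via submodularity (your Lemma~\ref{set-diff} computation is the same inequality), the deduction of (ii) from (iii), and the final claim about $\cT'$. Your argument that $k\ge 3$ differs from the paper's (which simply reads it off from $\lm_{M/a}(F\cap Y)<t$, forcing $t\ge 1$), but it is serviceable.

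The gap is in the core step: proving $\lm_{M/a}(G\cap X)>k-2$ and that $G\cap Y$ is $\cT$-small. Your plan is to orient $G\cap X$ and $G\cap Y$ in $M$, ``adjoining $a$ when necessary,'' and you flag as delicate the subcase where $a\in\cl_M(G\cap Y)$ (say) but $a\notin\cl^*_M(G\cap Y)$. In that subcase $\lm_M(G\cap Y)=\lm_{M/a}(G\cap Y)+1$ may equal $k-1$ and $\lm_M((G\cap Y)\cup\{a\})=\lm_M(G\cap Y)$, so neither set is $(k-1)$-separating in $M$ and neither can be oriented by $\cT$; the fact $a\in\cl_M(X)$ that you propose to exploit only confirms, via Lemma~\ref{coclos}, that $a\notin\cl^*_M(G\cap Y)$, i.e., that you really are stuck there. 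The missing observation is that this subcase is vacuous. Since $F$ is a rank-$t$ flat of $M_\cT$, Lemma~\ref{basic-stuff}(ii) gives $\lm_M(F)=t$, so $\lm_M(G)=t=\lm_{M/a}(F-\{a\})=\lm_{M/a}(G)$, and Lemma~\ref{tikokino} then yields $a\notin\cl_M(G)$. Hence $a$ lies in the closure of neither $G\cap X$ nor $G\cap Y$; these sets have the same connectivity in $M$ as in $M/a$, so each is orientable by $\cT$ whenever its $\lm_{M/a}$-value is at most $k-2$, and the cover $\{F,\,G\cap X,\,G\cap Y\}$ delivers the contradiction for (iii), with (iv) following because the complement of $G\cap Y$ contains the $\cT$-strong set $X$. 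Without the observation that $a\notin\cl_M(G)$, your case analysis does not close.
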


\begin{proof}
Part (i) follows from Lemma~\ref{ambiguous}. As $F-\{a\}$ is solid in $M/a$ and $\lm_{M/a}(F-\{a\})=t$,
we may assume up to labels that $\lm_{M/a}(F\cap X)\geq t$. By the symmetry of the connectivity function,
$\lm_{M/a}((F- \{a\})\cup X)=\lm_{M/a}(G\cap Y)$. Thus, by submodularity, we have
\begin{align*} 
\lm_{M/a}(G\cap Y) &\leq \lm_{M/a}(F- \{a\})+\lm_{M/a}(X)-\lm_{M/a}(F\cap X)\\
&\leq t+(k-2)-t.
\end{align*}
Hence $\lm_{M/a}(G\cap Y)\leq k-2$. 

Since $F$ is a rank-$t$ flat of $M_{\cT}$, it follows that $F$ is $\cT$-small and $\lm_M(F) = t$. Thus 
$\lm_M(F) = \lm_{M/a}(F- \{a\})$, so $a\notin\cl_M(G)$. Hence $a\notin\cl_M(G\cap Y)$ and 
$\lm_M(G\cap Y)=\lm_{M/a}(G\cap Y)\leq k-2$. If $G\cap Y$ is $\cT$-large, then $F\cup X$ is 
$\cT$-small. This implies that $X$ is $\cT$-weak, a contradiction. Hence $G\cap Y$ is
$\cT$-small.

By symmetry, the argument above proves that if $\lm_{M/a}(G\cap X)\leq k-2$, then $G\cap X$
is $\cT$-small. This implies that $\{F,G\cap X,G\cap Y\}$ is a cover of $E(M)$
by $\cT$-small sets contradicting the definition of a tangle. Hence 
$\lm_{M/a}(G\cap X)>k-2$.

If $\lm_{M/a}(F\cap Y)\geq t$ then, via the argument at the start of the proof, we deduce that
$\lm_{M/a}(G\cap X)\leq k-2$. Hence $\lm_{M/a}(F\cap Y)<t$. Thus $k \ge 3$. 

Now assume that $\cT'$ is a tangle in $M/a$ that induces $\cT$. The set $F-\{a\}$ is $\cT$-weak,
and $\lm_{M/a}(F-\{a\})=t\leq k-2$. Hence $F-\{a\}$ is $\cT'$-small. Moreover,  $G\cap Y$ is $\cT$-small
and is therefore $\cT'$-small. If $X$ is $\cT'$-small, then we can cover $E(M/a)$ by three $\cT'$-small
sets. Hence $X$ is $\cT'$-large, so $Y$ is $\cT'$-small.
\end{proof}

Let $\cT$ be a tangle of order $k$ in a matroid $M$, let $F$ be a rank-$t$ flat of $M_\cT$ where
$t\leq k-2$, and let $G=E(M)-F$. 
Assume that      $a$  is an element of $F$ for which $\lm_{M/a}(F-\{a\})=\lm_M(F)=t$,
and $F-\{a\}$ is solid in $M/a$. Let $(U,V)$ be a partition of $E(M/a)$ such that $\lm_{M/a}(U) \le k-2$. 
The $(k-1)$-separating partition $(U,V)$ is of {\it Type I} if $U$ or $V$ is $\cT$-weak; it is 
of {\it Type II} if neither $U$ nor $V$ is $\cT$-weak. By Lemma~\ref{ambiguous}, when $(U,V)$ is of Type I, 
exactly one of $U$ and $V$ is $\cT$-weak.

We now construct a set $\cT'$ of sets in $M/a$ that are {\it determined by $\cT$ in $M/a$} as follows. 
If $(U,V)$ is a  Type I $(k-1)$-separating partition  of $E(M/a)$, then the member of $\{U,V\}$ that is $\cT$-weak is in $\cT'$. 
If $(U,V)$ is a  Type II $(k-1)$-separating partition  of $E(M/a)$, then, by Lemma~\ref{canon}, there is a unique $Y$ in $\{U,V\}$ such that 
$\lm_{M/a}(Y\cap F)<t$ and $\lm_{M/a}(Y\cap G)\leq k-2$. This member $Y$ is in $\cT'$ and we have that 
$\lm_M(Y) = k-1$ and $a \in \cl_M(Y)$. We say that a member $Z$ of $\cT'$ is of Type I or Type II if it comes from a Type I or Type II 
$(k-1)$-separating partition  of $E(M/a)$.

\begin{corollary}
\label{selection}
Let $\cT$ be a tangle of order $k$ in a matroid $M$, let $F$ be a proper flat of $M_\cT$.
Assume that $a\in F$ is such that $\lm_{M/a}(F-\{a\})=\lm_M(F)$ and 
$F-\{a\}$ is solid in $M/a$. Let $\cT'$ be the collection of sets
determined by $\cT$ in $M/a$. Then the following hold.
\begin{itemize}
\item[(i)] If $(A,B)$ is a partition of $E(M/a)$ with $\lm_{M/a}(A)\leq k-2$, then exactly one
of $A$ and $B$ belongs to $\cT'$.
\item[(ii)] Say $A\in \cT'$. Then $\lm_{M/a}(A)\leq k-2$. Moreover, if $B\subseteq A$ and
$\lm_{M/a}(B)\leq k-2$, then $B\in\cT'$.
\item[(iii)] If $e\in E(M/a)$, then $\{e\}\in\cT'$.
\item[(iv)] If $\cT$ generates a tangle $\cT_{M/a}$ in $M/a$, then $\cT_{M/a}=\cT'$.
\end{itemize}
\end{corollary}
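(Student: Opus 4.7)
The plan is to verify (i)--(iv) in turn, using the construction of $\cT'$ together with Lemma~\ref{ambiguous} and Lemma~\ref{canon}. Throughout, set $G = E(M) - F$, set $F' = F - \{a\}$, and let $t = r_{M_\cT}(F) = \lm_M(F) = \lm_{M/a}(F')$. For (i), given a partition $(A,B)$ of $E(M/a)$ with $\lm_{M/a}(A) \le k-2$, apply Lemma~\ref{ambiguous}(i) to see that at most one of $A, B$ is $\cT$-weak. If exactly one is $\cT$-weak, the partition is Type~I and that side lies in $\cT'$ by construction. Otherwise neither is $\cT$-weak; Lemma~\ref{ambiguous}(ii) forces $\lm_{M/a}(A) = k-2$, and Lemma~\ref{canon}(ii) supplies a unique $Y \in \{A,B\}$ with $\lm_{M/a}(Y \cap F) < t$ as the unique member of $\{A,B\}$ in $\cT'$. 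For (iii), since $k \ge 3$ by Lemma~\ref{canon}, each singleton $\{e\}$ satisfies $\lm_M(\{e\}) \le 1 < k-1$, so (T2) and (T4) give $\{e\} \in \cT$; in particular, $\{a\}, \{e\} \in \cT$. A short case analysis on possible $\cT$-small supersets $W$ of $E(M) - \{a,e\}$ (the four possibilities $E(M) - W \in \{\emptyset, \{a\}, \{e\}, \{a,e\}\}$ are ruled out respectively by (T3), (T4), (T4), and a (T3) cover $\{a\} \cup \{e\} \cup W = E(M)$) shows $E(M/a) - \{e\}$ is not $\cT$-weak. Hence $(\{e\}, E(M/a) - \{e\})$ is Type~I with $\{e\} \in \cT'$.

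For (ii), the connectivity bound is built into the definition. For the hereditary statement, suppose $A \in \cT'$ and $B \subseteq A$ with $\lm_{M/a}(B) \le k-2$. First, if $B$ is $\cT$-weak then $E(M/a) - B$ is not $\cT$-weak by Lemma~\ref{ambiguous}(i), so $(B, E(M/a) - B)$ is Type~I and $B \in \cT'$. This handles the subcase of Type~I $A$, since any $\cT$-small superset of $A$ also contains $B$. The remaining case is $A$ of Type~II and $B$ not $\cT$-weak; then $E(M/a) - B \supseteq E(M/a) - A$ and the latter is not $\cT$-weak, so neither is the former, and $(B, E(M/a) - B)$ is Type~II. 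The key step is to show $\lm_{M/a}(F \cap B) < t$. Suppose for contradiction $\lm_{M/a}(F \cap B) \ge t$. Applying Lemma~\ref{canon} to $(B, E(M/a) - B)$ with $X = B$ gives that $G - B$ is $\cT$-small. Lemma~\ref{canon}(iv) applied to the Type~II partition of $A$ gives that $G \cap A$ is $\cT$-small. Since $B \subseteq A$,
\[
(G \cap A) \cup (G - B) = G,
\]
so $\{F, G \cap A, G - B\}$ covers $E(M)$ by $\cT$-small sets, contradicting (T3). Hence $\lm_{M/a}(F \cap B) < t$. Solidity of $F'$ in $M/a$ (with $\lm_{M/a}(F') = t$) now forces $\lm_{M/a}(F \cap (E(M/a) - B)) \ge t$, and a second application of Lemma~\ref{canon} with $X = E(M/a) - B$ yields that $G \cap B$ is $\cT$-small and $\lm_{M/a}(G \cap B) \le k-2$. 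Thus $B$ satisfies the Type~II defining condition and $B \in \cT'$.

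For (iv), assume $\cT$ generates the tangle $\cT_{M/a}$ in $M/a$; by Lemma~\ref{induce-down}, $\cT_{M/a}$ induces $\cT$. For each $Y \in \cT'$ arising from a Type~II partition, the ``Moreover'' clause of Lemma~\ref{canon} gives $Y \in \cT_{M/a}$. For each Type~I member $A \in \cT'$, there is $A_0 \in \cT$ with $A \subseteq A_0$, so $A_0 \cap E(M/a) \in \cT_{M/a}$ by the definition of a generated tangle, and (T3a) applied in $\cT_{M/a}$ gives $A \in \cT_{M/a}$. Thus $\cT' \subseteq \cT_{M/a}$, and because $\cT'$ already contains exactly one side of every $(k-1)$-separating partition of $E(M/a)$ by (i), equality follows. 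The main obstacle is the Type~II subcase of (ii): it is the one place where hereditary closure of $\cT'$ is not immediate and requires two applications of Lemma~\ref{canon}, paired with the $\cT$-smallness of $G \cap A$ from Lemma~\ref{canon}(iv) and the covering identity $F \cup (G \cap A) \cup (G - B) = E(M)$.
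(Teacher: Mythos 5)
Your proof is correct, and parts (i), (iii), and (iv) follow essentially the same lines as the paper's (your explicit case analysis in (iii) showing $E(M/a)-\{e\}$ is not $\cT$-weak is redundant given Lemma~\ref{ambiguous}(i), but harmless). The one place where you genuinely diverge is the Type~II case of (ii). The paper assumes $B\notin\cT'$, notes that then neither $B$ nor $B'=E(M/a)-B$ can be $\cT$-weak, applies Lemma~\ref{canon} to get $\lm_{M/a}(B'\cap F)<t$, and then combines this with $\lm_{M/a}(A\cap F)<t$ via Lemma~\ref{set-diff} to show that one of the partitions $\{B\cap F,\,B'\cap F\}$ or $\{A\cap F,\,A'\cap F\}$ of $F-\{a\}$ violates solidity; this forces $B'$, and hence $A'$, to be $\cT$-weak, a contradiction. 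You instead verify directly that $B$ is the Type~II member of $\cT'$ for the partition $(B,B')$: you rule out $\lm_{M/a}(F\cap B)\ge t$ by producing the cover $\{F,\,G\cap A,\,G-B\}$ of $E(M)$ by $\cT$-small sets (one application of Lemma~\ref{canon}(iv) to $(A,A')$ and one, under the contrary assumption, to $(B,B')$), and then use solidity plus a second application of Lemma~\ref{canon} with the labels of $B$ and $B'$ reversed to confirm the remaining defining conditions. Both arguments rest on Lemma~\ref{canon} and the solidity of $F-\{a\}$; yours trades the submodularity step (Lemma~\ref{set-diff}) for a (T3) covering argument, and it has the small advantage of exhibiting positively why $B$ satisfies the Type~II membership condition rather than arguing purely by contradiction.
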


\begin{proof} As in Lemma~\ref{canon}, let
$t=\lm_{M/a}(F-\{a\})$ and let $G=E(M)-F$.
Part (i) follows immediately from the definition of the members of $\cT'$. 
Consider (ii). Say $A\in\cT'$. Then, by the definition of $\cT'$, we have $\lm_{M/a}(A)\leq k-2$.
Say  $B\subseteq A$ has $\lm_{M/a}(B)\leq k-2$. Suppose $A$ is of Type~I. 
Then $A$ is $\cT$-weak. As $B \subseteq A$, it follows that $B$ is $\cT$-weak. 
As $\lm_{M/a}(B)\leq k-2$, we deduce that $B$ is a Type I member of $\cT'$. 
We may now assume that $A$ is not of Type I. Then, with $A' = E(M/a) - A$, neither $A$ nor $A'$ is 
$\cT$-weak. Thus $A$ of Type II, and $A$ and $A'$ are $\cT$-strong.  Assume that $B \not\in \cT'$. 
Then $B$ is not $\cT$-weak. Let $B' = E(M/a) - B$.  Assume $B'$ is not $\cT$-weak. Then, by Lemma~\ref{canon}, $\lm_M(B) = \lm_M(B')= k-1$ and  
either $B$ is a Type II set, a contradiction; or $\lm_{M/a}(B' \cap F) < t$ and $\lm_{M/a}(B' \cap G) \le k-2$. Since we also know that $\lm_{M/a}(A\cap F) < t$, it follows by Lemma~\ref{set-diff} that 
$\lm_{M/a}((A - B') \cap F) < t$ or $\lm_{M/a}((B' - A) \cap F) < t$. Now, in $M/a$,  one of the partitions  
$\{(A - B') \cap F, B' \cap F\}$  and $\{(B' - A) \cap F, A\cap F\}$ of $F - \{a\}$ violates the fact that this set is solid. We conclude that $B'$ is $\cT$-weak. As $B' \supseteq A'$, we deduce that $A'$ is $\cT$-weak, a contradiction.

Consider (iii).  By Lemma~\ref{canon}, $k\geq 3$, so $r_{M_{\cT}}(\{e\})\leq k-1$ and then, 
by Lemma~\ref{basic-stuff}(i),
$e$ is $\cT$-weak. Moreover, $\lm_{M/a}(\{e\}) \le k-2$ so  $\{e\}$ is a Type I member of  $\cT'$.

For (iv), assume that $\cT$ generates a tangle $\cT_{M/a}$ in $M/a$.  
Let $(U,V)$ be a partition of $E(M/a)$ for which  $\lm_{M/a}(U) \le k -2$. If $U$ or $V$ is $\cT$-weak, then $(U,V)$ is a Type I $(k-1)$-separating partition  of $E(M/a)$ and the $\cT$-weak member of $\{U,V\}$ is in $\cT'$. Say $U \in \cT'$. Then 
$U \subseteq U_0$ where $U_0 \in \cT$. Thus $\lm_M(U_0) \le k-2$, so $\lm_{M/a}(U_0- \{a\}) \le k-2$. Hence $U_0- \{a\} \in \cT_{M/a}$. As $\lm_{M/a}(U) \le k -2$ and $U_0 - \{a\} \in \cT_{M/a}$, so $U \in \cT_{M/a}$. Thus if $U$ is a Type I member of $\cT'$, then 
$U \in \cT_{M/a}$

Now suppose that $U$ is a Type II member of $\cT'$. By the last part of Lemma~\ref{canon}, $U \in \cT_{M/a}$.  We deduce that $\cT' \subseteq \cT_{M/a}$. By (i)--(iii), $\cT'$ is a tangle provided there are no three members of $\cT'$ whose union is $E(M/a)$. But this holds because $\cT_{M/a}$ is a tangle. Since $\cT_{M/a}$ is the unique tangle of order $k$ in $M/a$ that contains $\{A - \{a\}: A \in \cT\}$, and $\cT'$ is a tangle of   order $k$ in $M/a$ that contains $\{A - \{a\}: A \in \cT\}$, we conclude that $\cT' = \cT_{M/a}$.
\end{proof}

The upshot of Corollary~\ref{selection} is that to prove that $\cT$ generates a tangle in $M/a$, 
it suffices to prove that $E(M/a)$ is not covered by three members of $\cT'$. One way to guarantee this
is to strengthen the condition on $F-\{a\}$ in $M/a$.
A subset $A$ of $E(M)$ is {\em titanic} if there is no partition $\{X,Y,Z\}$ of $A$
such that $\lm(X),\lm(Y),\lm(Z)<\lm(A)$. We can replace the condition in the definition of titanic
by an apparently weaker
one. An immediate consequence of the next result is that if $A$ is titanic, then it is solid.

\begin{lemma}
\label{tit-cover}
Let $M$ be a matroid and $A\subseteq E(M)$. Then $A$ is titanic if and only there are no
sets $X,Y,Z\subseteq A$ such that $X\cup Y\cup Z=A$ and $\lm(X),\lm(Y),\lm(Z)<\lm(A)$.
\end{lemma}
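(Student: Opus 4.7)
The ``only if'' direction is immediate, since any partition of $A$ into three pieces is in particular a cover by three pieces. So the content lies in the converse: given sets $X_1, X_2, X_3 \subseteq A$ with $X_1 \cup X_2 \cup X_3 = A$ and $\lm(X_i) < \lm(A)$ for each $i$, produce a partition of $A$ with the same property.

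My plan is to reduce the overlap iteratively using Lemma~\ref{set-diff}. The key sublemma is: if $B_1, B_2 \subseteq A$ satisfy $\lm(B_1), \lm(B_2) < \lm(A)$, then at least one of $\lm(B_1 - B_2) < \lm(A)$ and $\lm(B_2 - B_1) < \lm(A)$ holds. To see this, note that the connectivity function is integer-valued, so $\lm(B_i) \leq \lm(A) - 1$. Applying Lemma~\ref{set-diff} gives
\[
\lm(B_1 - B_2) + \lm(B_2 - B_1) \;\leq\; \lm(B_1) + \lm(B_2) \;\leq\; 2\lm(A) - 2.
\]
Hence if one summand were at least $\lm(A)$, the other would be at most $\lm(A) - 2 < \lm(A)$, and the claim holds in any case.

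Given the sublemma, the algorithm is straightforward. Start with the cover $X_1, X_2, X_3$ and, while some pair $X_i, X_j$ meets, pick the appropriate option from the sublemma: replace $X_i$ by $X_i - X_j$ (say) so that the new $X_i$ still satisfies $\lm(X_i) < \lm(A)$. Observe that $(X_i - X_j) \cup X_j \cup X_k = X_i \cup X_j \cup X_k = A$, so the cover property is preserved; moreover $X_i - X_j$ is disjoint from $X_j$, and the quantity $|X_1| + |X_2| + |X_3|$ drops by at least one. Iterating terminates because this sum is a nonnegative integer, and when no pair meets we have a partition of $A$ into three sets each of connectivity less than $\lm(A)$, as required.

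The only point that needs the slightest care is verifying that the replacement really makes progress on one of the overlaps without creating a new one: the new $X_i$ is a subset of the old $X_i$, so any pair that was disjoint remains disjoint, and the pair $(X_i, X_j)$ we operated on becomes disjoint. Thus the main (essentially only) obstacle is the sublemma, which the integrality gap of $2$ in Lemma~\ref{set-diff} resolves cleanly. The equivalence with solidity (mentioned just before the lemma as the special case of two sets) follows the same pattern, which is reassuring.
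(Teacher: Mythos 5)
Your proof is correct and follows essentially the same route as the paper's: both use Lemma~\ref{set-diff} together with integrality to conclude that one of the two set differences still has connectivity below $\lm(A)$, then iterate the replacement until the cover becomes a partition. Your write-up just makes the termination and cover-preservation checks explicit, which the paper leaves implicit.
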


\begin{proof}
Assume that $X\cup Y\cup Z=A$ and that $\lm(X),\lm(Y),\lm(Z)<\lm(A)$. Say $X\cap Y \neq \emptyset$.
By Lemma~\ref{set-diff}, we may assume, up to labels that $\lm(X-Y)< \lm(A)$. 
Replacing $X$ by $X-Y$ and continuing this process produces a partition that proves 
that $A$ is titanic. The converse is immediate.
\end{proof}

\begin{lemma}
\label{tit-tangle}
Let $\cT$ be a tangle of order $k$ in a  matroid $M$, let $F$ be a rank-$t$ flat of $M_\cT$ where 
$t\leq k-2$.
Assume that $a\in F$, that $\lm_{M/a}(F-\{a\})=\lm_M(F)$, and that
 $F-\{a\}$ is titanic in $M/a$. Then 
$\cT$ generates a tangle of order $k$ in $M/a$.
\end{lemma}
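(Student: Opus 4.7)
Since $F-\{a\}$ is titanic in $M/a$, Lemma~\ref{tit-cover} shows it is solid, so Corollary~\ref{selection} applies and produces the collection $\cT'$ of subsets of $E(M/a)$ determined by $\cT$. By parts (i)--(iii) of that corollary, $\cT'$ satisfies axioms (T1), (T2), and (T4). Once (T3) is verified, $\cT'$ will be a tangle of order $k$ in $M/a$; a short uniqueness argument (any tangle in $M/a$ containing $\{A-\{a\}:A\in\cT\}$ induces $\cT$ in $M$ by Lemma~\ref{induce-down}, and must orient every Type~II partition as $\cT'$ does by the last sentence of Lemma~\ref{canon}) will then identify this tangle as the one generated by $\cT$. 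So the plan reduces to verifying that no three members of $\cT'$ cover $E(M/a)$.

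Suppose for a contradiction that $A_1,A_2,A_3\in\cT'$ cover $E(M/a)$, and set $X_i=A_i\cap(F-\{a\})$, so that $X_1\cup X_2\cup X_3=F-\{a\}$. The strategy is to force one of two contradictions: either $\lm_{M/a}(X_i)<t$ for every $i$, contradicting the titanic property of $F-\{a\}$ via Lemma~\ref{tit-cover}; or a three-cover of $E(M)$ by $\cT$-small sets, contradicting (T3) for $\cT$. For each Type~II $A_i$, the construction of $\cT'$ via Lemma~\ref{canon}(ii) gives $\lm_{M/a}(X_i)=\lm_{M/a}(A_i\cap F)<t$ directly. For a Type~I $A_i$, fix $B_i\in\cT$ with $A_i\subseteq B_i$ and suppose $\lm_{M/a}(X_i)\ge t$; submodularity in $M/a$ forces $\lm_{M/a}(A_i\cup(F-\{a\}))\le k-2$, so by Corollary~\ref{selection}(i) either $A_i\cup(F-\{a\})\in\cT'$ or the complementary set $G-A_i$ (with $G=E(M)-F$) lies in $\cT'$. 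In the latter case the partition cannot be Type~II, for then Lemma~\ref{canon}(iv) would render $G-A_i$ simultaneously $\cT$-small and $\cT$-strong; so it is Type~I with $G-A_i\subseteq C$ for some $C\in\cT$, and then $\{F,B_i,C\}$ is a three-cover of $E(M)$ by $\cT$-small sets, contradicting (T3) for $\cT$. In the former case we replace $A_i$ by $A_i\cup(F-\{a\})$ in the cover, so after these replacements every Type~I $A_i$ satisfies either $\lm_{M/a}(X_i)<t$ or $A_i\supseteq F-\{a\}$.

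If every $A_i$ ends up with $\lm_{M/a}(X_i)<t$, titanicity is violated and we are done. Otherwise some $A_i$, say $A_1$, contains $F-\{a\}$, and the main remaining task is to show $A_1\cup\{a\}\in\cT$. Here one uses that $F$ is fully closed in $M$ (Corollary~\ref{full-closed}) together with $\lm_{M/a}(F-\{a\})=\lm_M(F)$: a short computation via Lemmas~\ref{up-down} and \ref{tikokino} gives $\lm_M(F-\{a\})\ge t$ and $a\in\cl^*_M(F-\{a\})\subseteq\cl^*_M(A_1)$, whence $\lm_M(A_1\cup\{a\})\le k-2$; submodularity with $B_1\cap F\supseteq F-\{a\}$ gives $\lm_M(B_1\cup F)\le k-2$, and a (T3)-argument applied to $\{F,B_1,E(M)-(B_1\cup F)\}\subseteq\cT$ shows $B_1\cup F\in\cT$; a further (T3)-argument then places $A_1\cup\{a\}$ itself in $\cT$. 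To complete the contradiction, for each other $A_j$ set $C_j=B_j$ if $A_j$ is Type~I and $C_j=A_j\cap G\in\cT$ (from Lemma~\ref{canon}(iv)) if $A_j$ is Type~II; since $A_1\cup\{a\}\supseteq F$ absorbs any Type~II piece $A_j\cap F$, the triple $\{A_1\cup\{a\},C_2,C_3\}$ is a three-cover of $E(M)$ by $\cT$-small sets, once again contradicting (T3) for $\cT$. The principal obstacle throughout is the Type~I analysis, where one must combine the $\cT$-weakness of $A_i$ in $M$ with $(k-1)$-separation constraints in $M/a$ and exploit full-closedness of $F$.
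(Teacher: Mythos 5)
Your proposal is correct and follows essentially the same route as the paper: reduce via Corollary~\ref{selection} to showing no three members of $\cT'$ cover $E(M/a)$, then uncross each Type~I member with the flat $F$ using submodularity so that either every piece meets $F-\{a\}$ in a set of connectivity less than $t$ (contradicting titanicity via Lemma~\ref{tit-cover}) or one obtains a cover of $E(M)$ by three $\cT$-small sets (contradicting (T3)). The only cosmetic difference is that you perform the uncrossing in $M/a$ against $F-\{a\}$ and then lift $A_1\cup\{a\}$ back to $M$, whereas the paper uncrosses the $\cT$-small supersets against $F$ directly in $M$, which shortcuts your final step.
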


\begin{proof}
Let $G=E(M)-F$. 
Let $\cT'$ be the collection of sets that are determined by $\cT$ in $M/a$. Assume that $\cT'$ does not generate a tangle in 
$M/a$. Then there are sets $X_1,X_2,X_3$ in $\cT'$ such that $X_1\cup X_2\cup X_3=E(M/a)$.
Every member of $\cT'$ is either of Type~I, that is, it is contained in a $\cT$-small set, or it is of Type~II.

\begin{sublemma}
\label{tit-tangle-0}
If $X_i$ is of Type~I, then there is a $\cT$-small
subset $X_i'$ of $E(M)$  that contains $X_i$ such that $\lm_M(X_i'\cap F)<t$.
\end{sublemma}

\begin{proof} 
Since $X_i$ is of Type~I, there is a $\cT$-small
subset $X_i'$ of $E(M)$  that contains $X_i$. 
Say $\lm_M(X'_i \cap F)\geq t$. Now $\lm_M(F)=t$ and $\lm_M(X'_i)\leq k-2$. Thus, as 
$$\lm_M(X'_i\cap F)+\lm_M(X'_i\cup F)\leq \lm_M(X'_i)+\lm_M(F),$$
it follows that $\lm_M(X_i'\cup F)\leq k-2$.
Assume that $X_i'\cup F$ is $\cT$-large. Then $\{E(M)-(X'_i\cup F), F, X'_i\}$ is a cover of $E(M)$
by $\cT$-small sets, a contradiction. Therefore  $X'_i\cup F$ is $\cT$-small.

For simplicity, assume that $X'_1\cup F$ is $\cT$-small.
For each $i$ in $\{2,3\}$, if $X_i$ is of Type~I, let $X'_i$ be a $\cT$-small set containing $X_i$; and, if 
 $X_i$ is of Type~II,  let $X'_i=X_i\cap G$. Using Lemma~\ref{canon}(iv), we deduce that, in each case, $X'_i$ is $\cT$-small.
Then $\{X'_1\cup F,X'_2,X'_3\}$ is a cover of $E(M)$ by $\cT$-small sets, a contradiction.
Hence $\lm_M(X_i'\cap F)<t$ as required.
\end{proof}

For $i\in\{1,2,3\}$, if $X_i$ has Type I, then we take $X_i''$ to be the set $X_i'$ given by (\ref{tit-tangle-0}),
and if $X_i$ has Type II, then we take $X_i''$ to be $X_i$. As the sets $X_1''\cap (F-\{a\})$, $X_2''\cap (F-\{a\})$,
$X_3''\cap (F-\{a\})$ cover $F-\{a\}$, these sets provide a contradiction to the assumption that 
$F-\{a\}$ is titanic in $M/a$.





\end{proof}

\section{Preserving Breadth}

Until further notice, $\cT$ is a tangle of order $k\geq 2$ in a matroid $M$, the set $F$ is a flat of
$M_\cT$ with $r_M(F)=t\leq k-2$, and $a\in F$ has the properties that $\lm_{M/a}(F-\{a\})=t$
and that $F-\{a\}$ is titanic in $M/a$. Observe that $\lm(F)=t$. By Lemma~\ref{tit-tangle}, 
$\cT$ generates a tangle
$\cT_a$ in $M/a$. 

Let $M_1$ and $M_2$ be matroids on a common ground set $E$. We say that $M_1$
is {\em freer} than $M_2$ if $r(M_1)=r(M_2)$ and every set that is independent in $M_2$ is 
independent in $M_1$.  Equivalently, the identity map on $E$ is a rank-preserving weak map from $M_1$ to $M_2$. The next lemma is elementary 
and can be derived, for example, by combining Proposition 7.3.11 and Corollary 7.3.13 of \cite{O11}.

\begin{lemma}
\label{free-and-easy}
The matroid $M_1$ is freer than $M_2$ if $r(M_1)=r(M_2)$ and every hyperplane of $M_1$
is contained in a hyperplane of $M_2$.
\end{lemma}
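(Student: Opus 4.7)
The plan is to reduce the claim to the equivalent statement that every spanning set of $M_2$ is spanning in $M_1$, after which the rank equality finishes the argument via a basis-extension argument.

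First I would translate the hyperplane hypothesis into a statement about spanning sets. A subset $X$ of the common ground set $E$ is non-spanning in a matroid $N$ if and only if $X$ is contained in some hyperplane of $N$: if $X$ is non-spanning then $\cl_N(X)$ is a proper flat, and every proper flat is contained in a maximal proper flat, namely a hyperplane; conversely, any hyperplane has rank $r(N)-1$, so every one of its subsets is non-spanning. Assuming every hyperplane of $M_1$ is contained in a hyperplane of $M_2$, I conclude that every non-spanning subset of $M_1$ is non-spanning in $M_2$. Taking contrapositives, every spanning subset of $M_2$ is spanning in $M_1$.

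Next, let $I$ be an independent set in $M_2$, and extend $I$ to a basis $B$ of $M_2$. By the previous step, $B$ spans $M_1$. Since $|B| = r(M_2) = r(M_1)$, the spanning set $B$ must in fact be a basis of $M_1$, so $I \subseteq B$ is independent in $M_1$. This is exactly the condition that $M_1$ be freer than $M_2$.

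There is no real obstacle here; the argument is a short chain of equivalences, and the only moment at which the rank hypothesis is used is the final cardinality count that promotes the statement ``$B$ spans $M_1$'' to ``$B$ is a basis of $M_1$''.
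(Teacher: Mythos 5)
Your proof is correct. The paper does not write out an argument for this lemma at all; it simply observes that the statement "can be derived, for example, by combining Proposition 7.3.11 and Corollary 7.3.13 of Oxley," i.e.\ it leans on the general theory of rank-preserving weak maps. Your route is a self-contained elementary substitute: the translation ``non-spanning $\Leftrightarrow$ contained in a hyperplane'' is valid (vacuously so in the degenerate rank-$0$ case, where there are no hyperplanes and no non-spanning sets), the contrapositive correctly yields that spanning sets of $M_2$ are spanning in $M_1$, and the final step --- a spanning set of cardinality $r(M_1)$ is a basis --- is exactly where the rank hypothesis is needed and is used correctly. Note that your argument also silently handles the one potentially worrying edge case (an element that is a loop of $M_1$ but not of $M_2$), since such an element would lie in a basis of $M_2$ that your argument shows must be a basis of $M_1$. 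What the citation buys the authors is brevity and placement of the lemma inside the standard weak-map framework they invoke elsewhere (``the identity map is a rank-preserving weak map''); what your version buys is a proof readable without consulting Oxley.
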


We are interested in the relationship between $M_\cT\ba a$ and $M_{\cT_a}$. 
Note that, because a tangle in $M$ is a tangle in $M^*$,  the matroids $M_{\cT}$ and $(M^*)_{\cT}$ are equal. 
Thus $M_{\cT}\ba a = (M^*)_\cT\ba a$. This may cause confusion because of  the familiar identity that $M^*\ba a = (M/a)^*$.

\begin{lemma}
\label{freer}
The matroid $M_\cT\ba a$ is freer than $M_{\cT_a}$.
\end{lemma}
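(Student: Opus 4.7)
My plan is to invoke Lemma~\ref{free-and-easy}, which reduces the claim to two points: that $r(M_\cT\backslash a)=r(M_{\cT_a})$, and that every hyperplane of $M_\cT\backslash a$ is contained in a hyperplane of $M_{\cT_a}$. Both tangle matroids have rank $k-1$ by Theorem~\ref{tangle-matroid}, so the rank equality reduces to showing that $a$ is not a coloop of $M_\cT$. For this, I would use that $F$ has rank $t\leq k-2<k-1$ in $M_\cT$ and is therefore non-spanning; by the roundness of tangle matroids (Corollary~\ref{round}), the complement $E(M)-F$ must span $M_\cT$, so $M_\cT$ has a basis contained in $E(M)-F$, and this basis avoids $a\in F$.

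For the hyperplane containment, I would fix a hyperplane $H$ of $M_\cT\backslash a$ and consider its closure $H':=\cl_{M_\cT}(H)$ in the full tangle matroid. Because $H$ is closed in the restriction $M_\cT\backslash a$, the closure $H'$ can differ from $H$ by at most the deleted element $a$, so either $H'=H$ or $H'=H\cup\{a\}$. In both cases $H'$ is a flat of $M_\cT$ of rank $k-2$, hence a hyperplane of $M_\cT$; this makes $H'$ a maximal $\cT$-small set, and Lemma~\ref{basic-stuff}(ii) yields $\lm_M(H')=k-2$. The inclusion $H\subseteq H'$ then shows $H$ is $\cT$-weak in $M$, while Lemma~\ref{low-con} applied to the minor $M/a$ with the set $H'$ (whose intersection with $E(M/a)$ equals $H$ in either case) gives $\lm_{M/a}(H)\leq \lm_M(H')=k-2$. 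Since $\cT$ generates $\cT_a$ in $M/a$ by hypothesis, Lemma~\ref{add-weak} then forces $H\in\cT_a$, so $H$ is contained in a maximal $\cT_a$-small set---that is, a hyperplane of $M_{\cT_a}$.

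The only step that requires any care is the dichotomy for $\cl_{M_\cT}(H)$, but this is immediate from $H$'s being closed in $M_\cT\backslash a$. Once that is in hand, the single superset $H'$ does double duty: its containing $H$ certifies $\cT$-weakness, while the bound $\lm_M(H')=k-2$ together with Lemma~\ref{low-con} supplies the separation estimate required by Lemma~\ref{add-weak}. I do not anticipate a serious obstacle; the titanic hypothesis on $F-\{a\}$ has already been absorbed into the existence of $\cT_a$ via Lemma~\ref{tit-tangle} and is not needed again here.
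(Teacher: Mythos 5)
Your proposal is correct and follows essentially the same route as the paper: roundness of $M_\cT$ gives the rank equality, a hyperplane $H$ of $M_\cT\ba a$ extends to the hyperplane $H$ or $H\cup\{a\}$ of $M_\cT$ and is therefore $\cT$-weak, whence $H$ is $\cT_a$-small and lies in a hyperplane of $M_{\cT_a}$, and Lemma~\ref{free-and-easy} finishes. Your detour through Lemmas~\ref{low-con} and~\ref{add-weak} is just a more explicit unpacking of the paper's appeal to ``the definition of generating,'' and is perfectly sound.
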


\begin{proof}
By Corollary~\ref{round},  $M_\cT$ is round, so $a$ is not a coloop of $M_\cT$. Hence $r(M_\cT\ba a)=k-1$.
By definition, $r(M_{\cT_a})=k-1$.  Let $H$ be a hyperplane of
$M_\cT\ba a$.  Then either $H$ or $H\cup a$ is $\cT$-small as hyperplanes of the tangle matroid
are $\cT$-small. Thus, by the definition of generating, $H$ is $\cT_a$-small.    
We deduce that $H$ is contained in a hyperplane of $M_{\cT_a}$ and
the lemma follows from Lemma~\ref{free-and-easy}.
\end{proof}

It would be very surprising if the breadth went up.

\begin{lemma}
\label{breadth-down}
For $M_{\cT}$ and $M_{\cT_a}$,
$$\breadth(M_{\cT_a})\leq \breadth(M_\cT).$$
\end{lemma}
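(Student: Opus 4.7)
The plan is to leverage Lemma~\ref{freer} directly. Let $U\subseteq E(M_{\cT_a})$ be a spanning uniform restriction of $M_{\cT_a}$ with $|U|=\breadth(M_{\cT_a})$. Since $M_{\cT_a}$ has rank $k-1$, we have $|U|\geq k-1$ and $M_{\cT_a}|U\cong U_{k-1,|U|}$. Because $a\notin E(M_{\cT_a})$, also $U\subseteq E(M)-\{a\}\subseteq E(M_\cT)$, so $M_\cT|U$ is defined and equals $(M_\cT\ba a)|U$.

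Next I would transfer uniformity via freeness. Every $(k-1)$-element subset of $U$ is a basis of $M_{\cT_a}|U$, hence independent in $M_{\cT_a}$. By Lemma~\ref{freer}, $M_\cT\ba a$ is freer than $M_{\cT_a}$, so independent sets of $M_{\cT_a}$ are independent in $M_\cT\ba a$, and therefore in $M_\cT$. Hence every $(k-1)$-element subset of $U$ is independent in $M_\cT|U$. This simultaneously gives $r(M_\cT|U)\geq k-1$, and since $r(M_\cT)=k-1$, we get $r(M_\cT|U)=k-1$, so $U$ spans $M_\cT$; moreover, a rank-$(k-1)$ matroid in which every $(k-1)$-subset of the ground set is a basis is uniform, so $M_\cT|U\cong U_{k-1,|U|}$.

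Combining these, $U$ is a spanning uniform restriction of $M_\cT$ of size $|U|$, giving
\[
\breadth(M_\cT)\ \geq\ |U|\ =\ \breadth(M_{\cT_a}),
\]
as required.

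There is no real obstacle here; the only subtle point is making sure that ``freer than'' transfers uniformity of a restriction, which works because uniform matroids are the freest matroids of their rank on a given ground set, so the independent-set comparison upgrades from ``at least as free'' to ``equal to $U_{k-1,|U|}$'' once we know $r(M_\cT|U)=k-1$.
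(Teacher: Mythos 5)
Your proof is correct and follows the same route as the paper: both arguments rest on Lemma~\ref{freer}, transferring a maximum spanning uniform restriction of $M_{\cT_a}$ to $M_\cT\ba a$ (and hence to $M_\cT$) via the independent-set comparison. You simply spell out the details (that every $(k-1)$-subset of $U$ stays independent, forcing $M_\cT|U\cong U_{k-1,|U|}$ and spanning) that the paper leaves implicit.
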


\begin{proof}
By Lemma~\ref{freer},  $M_\cT\ba a$ is freer than $M_{\cT_a}$, so any uniform restriction of 
$M_{\cT_a}$ is also uniform in $M_{\cT}\ba a$.
\end{proof}

The real  difference between $M_\cT\ba a$ and $M_{\cT_a}$ is that elements
 of $F-\{a\}$ are potentially occupying more specialised positions
in $M_{\cT_a}$. It is useful to know that the ranks of certain sets are unchanged.
In particular, we have the following where $G=E(M)-F$.

\begin{lemma}
\label{static}
Suppose $X\subseteq E(M)-\{a\}$.
\begin{itemize}
\item[(i)] If $F-\{a\}\subseteq X$, then $X$ has the same rank in both $M_\cT\ba a$ and 
$M_{\cT_a}$.
\item[(ii)] If $X\subseteq G$, then $X$ has the same rank in both $M_\cT\ba a$ and 
$M_{\cT_a}$.
\end{itemize}
\end{lemma}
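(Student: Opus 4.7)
By Corollary~\ref{round}, $M_\cT$ is round, so $a$ is not a coloop of $M_\cT$; hence $r_{M_\cT \backslash a}(X) = r_{M_\cT}(X)$, and Lemma~\ref{freer} gives $r_{M_\cT}(X) \geq r_{M_{\cT_a}}(X)$. The remaining task is the reverse inequality in each case. If $r_{M_{\cT_a}}(X) = k-1$ this is trivial, so we put $s' := r_{M_{\cT_a}}(X) < k-1$ and choose $B' \in \cT_a$ with $X \subseteq B'$ and $\lm_{M/a}(B') = s'$ by Lemma~\ref{tangle-matroid-properties}(i). By Corollary~\ref{selection}, $B'$ is of Type~I (contained in some $\cT$-small $B_0$) or Type~II (as characterised by Lemma~\ref{canon}). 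The plan is to exhibit $B \in \cT$ with $X \subseteq B$ and $\lm_M(B) \leq s'$.

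For (i), since $F - \{a\} \subseteq X \subseteq B'$ and $a \notin B'$, we have $B' \cap F = F - \{a\}$ and $\lm_{M/a}(B' \cap F) = t$, forcing $B'$ to be Type~I by Lemma~\ref{canon}(ii). A short case analysis of $\lm_{M/a}(F - \{a\}) = \lm_M(F) = t$ via Lemmas~\ref{up-down} and~\ref{tikokino} shows $a \in \cl^*_M(F - \{a\})$, and by monotonicity of $\cl^*$, $a \in \cl^*_M(B')$. Lemma~\ref{up-down} then yields $\lm_M(B' \cup \{a\}) = s'$. To see $B' \cup \{a\} \in \cT$, we first argue $B_0 \cup F \in \cT$: submodularity bounds $\lm_M(B_0 \cup F) \leq k - 2$ using $\lm_M(B_0 \cap F) \geq t$ (since $B_0 \cap F \supseteq F - \{a\}$), and (T3) rules out $E(M) - (B_0 \cup F) \in \cT$. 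Then $B := B' \cup \{a\} \subseteq B_0 \cup F$ lies in $\cT$ by (T3a), with $\lm_M(B) = s'$.

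For (ii), $X \subseteq G$, we split by the type of $B'$. If $B'$ is Type~II, Lemma~\ref{canon}(iv) gives $G \cap B' \in \cT$; the fact that $a \notin \cl_M(G)$ (established in the proof of Lemma~\ref{canon}), combined with Lemma~\ref{set-diff} and solidity of $F - \{a\}$ (implied by titanicity via Lemma~\ref{tit-cover}), yields $\lm_M(G \cap B') = \lm_{M/a}(G \cap B') \leq s'$, so $B = G \cap B'$ works. If $B'$ is Type~I with $a \notin \cl_M(B')$, Lemma~\ref{tikokino} gives $\lm_M(B') = s'$ and (T3a) yields $B = B' \in \cT$. If $B'$ is Type~I with $a \in \cl_M(B') \cap \cl^*_M(B')$, Lemma~\ref{up-down} gives $\lm_M(B' \cup \{a\}) = s'$; since $a \in \cl_M(B') \subseteq \cl_{M_\cT}(B')$ by Lemma~\ref{dennis2}, we have $B' \cup \{a\} \subseteq \cl_{M_\cT}(B')$, a $\cT$-small flat, so $B = B' \cup \{a\} \in \cT$ by (T3a).

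The main obstacle is the remaining sub-case in (ii): $B'$ is Type~I with $a \in \cl_M(B') \setminus \cl^*_M(B')$, where both $\lm_M(B')$ and $\lm_M(B' \cup \{a\})$ equal $s' + 1$. We set $P = B' \cap F$ and $Q = (F - \{a\}) - P$; solidity of $F - \{a\}$ ensures $\lm_{M/a}(P) \geq t$ or $\lm_{M/a}(Q) \geq t$. If $\lm_{M/a}(Q) \geq t$, Lemma~\ref{set-diff} applied to $F - \{a\}$ and $B'$ gives $\lm_{M/a}(B' \cap G) \leq s'$; since $a \notin \cl_M(G)$, also $\lm_M(B' \cap G) \leq s'$, and $B = B' \cap G \in \cT$ by (T3a). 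If instead $\lm_{M/a}(P) \geq t$, submodularity on $B'$ and $F - \{a\}$ gives $\lm_{M/a}(B'') \leq s'$ where $B'' := B' \cup (F - \{a\})$, and (T2)/(T3) force $B'' \in \cT_a$. Applying part~(i), already proved, to $B''$ (which contains $F - \{a\}$) yields $r_{M_\cT}(X) \leq r_{M_\cT}(B'') = r_{M_{\cT_a}}(B'') \leq s'$, completing the argument.
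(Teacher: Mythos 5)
Your proof is correct in substance but takes a genuinely different route from the paper's. You argue directly: from a $\cT_a$-small set $B'\supseteq X$ with $\lm_{M/a}(B')=s'$, you use the Type~I/Type~II dichotomy of Corollary~\ref{selection} to manufacture, in every case, a $\cT$-small set $B\supseteq X$ with $\lm_M(B)\leq s'$, which gives $r_{M_\cT}(X)\leq s'$; the reverse inequality comes from Lemma~\ref{freer}. The paper instead argues by contradiction: assuming the ranks differ, it deduces via Lemma~\ref{tikokino} that $a\in\cl_M(X')\cap\cl_M(Y')$ where $Y'=E(M)-(X'\cup\{a\})$, disposes of case~(i) at once because $Y'\subseteq G$ and $a\notin\cl_M(G)$, and for case~(ii) shows $\lm_{M/a}(X'\cap(F-\{a\}))<t$ and then $\lm_M(X'\cap G)<s$ by orthogonality and solidity, arriving at the witness $X'\cap G$. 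The two arguments converge in your hardest sub-case --- Type~I with $a\in\cl_M(B')\setminus\cl^*_M(B')$ is exactly the configuration the paper's contradiction hypothesis isolates --- and your witnesses there ($B'\cap G$, or the appeal to part~(i) via $B'\cup(F-\{a\})$) parallel the paper's (\ref{stat-4}) and (\ref{stat-5}). Your version is longer but leans more systematically on Corollary~\ref{selection}; the paper's is shorter because the contradiction hypothesis eliminates most cases for free. One small repair is needed: your derivation of $a\in\cl^*_M(F-\{a\})$ from $\lm_{M/a}(F-\{a\})=\lm_M(F)$ uses Lemma~\ref{tikokino}, which requires $a$ not to be a loop of $M$, and the conclusion is in fact false when $a$ is a loop. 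As the paper does, dispatch that degenerate case first by observing that if $a$ is a loop then $M/a=M\ba a$ and $M_{\cT_a}=M_\cT\ba a$, so the lemma is immediate.
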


\begin{proof}
By Lemma~\ref{tit-tangle}, $\cT_a$ has order $k$ in $M/a$, so $r(M_{\cT_a})=k-1$. 
Since $M_{\cT}$ is round, it has no coloops. Hence $r(M_{\cT}\ba a)=r(M_\cT)=k-1$.
If $a$ is a loop of $M$, then it is readily verified that $M_{\cT_a}=M_{\cT}\ba a$ and the 
lemma holds. We assume from now on that $a$ is not a loop of $M$.

Assume that $X$ has different ranks in $M_{\cT}\ba a$ and $M_{\cT_a}$. Since $M_{\cT}\ba a$
is freer that $M_{\cT_a}$ and the two matroids have the same rank, we see that

\begin{sublemma}
\label{stat-0}
$X$ is not spanning in $M_{\cT_a}$.
\end{sublemma}

Since $X$ is not spanning in $M_{\cT_a}$, by Lemma~\ref{tangle-matroid-properties}, we have
$r_{M_{\cT_a}}(X)=\min\{\lm_{M/a}(Z):Z\supseteq X\text{~and  $Z$ is $\cT_a$-small}\}$.
Assume that $X'\supseteq X$ is a $\cT_a$-small set with $\lm_{M/a}(X')=r_{M_{\cT_a}}(X)$.
Say $\lm_{M/a}(X')=s-1$. By (\ref{stat-0}), $s\leq k-1$. Let $Y'=E(M)-(X'\cup\{a\})$. The next
claim follows from an application of Lemma~\ref{tikokino} and the assumption that
$r_{M_{\cT_a}}(X)\neq r_{M_\cT\ba a}(X)$.

\begin{sublemma}
\label{stat-1}
$\lm_M(X')=\lm_M(X'\cup\{a\})=s$, $r_{M_{\cT}\ba a}(X)=s$,  $a\in\cl_M(X')$ and $a\in\cl_M(Y')$. 
\end{sublemma}

\begin{sublemma}
\label{stat-2}
$a\notin\cl_M(G)$.
\end{sublemma}

\begin{proof} Since $\lambda_M(F)=\lambda_{M/a}(F-a)$ and $G=E(M)-F$, 
it follows by Lemma~\ref{tikokino} that $a\not\in {\rm cl}(G)$.
\end{proof}

\begin{sublemma}
\label{stat-3}
$F-\{a\}\nsubseteq X'$ and hence (i) holds.
\end{sublemma}

\begin{proof}
Say $F-\{a\}\subseteq X'$. Then $Y'\subseteq G$. By (\ref{stat-2}), $a\notin\cl_M(Y')$ contradicting 
(\ref{stat-1}). Hence $F-\{a\}\nsubseteq X'$. If $F-\{a\}\subseteq X$, then $F-\{a\}\subseteq X'$. Hence (i) holds.
\end{proof}

It remains to prove (ii). From now on we assume that $X\subseteq G$.

\begin{sublemma}
\label{stat-4}
$\lm_{M/a}(X'\cap (F-\{a\}))<t$.
\end{sublemma}

\begin{proof}
Assume that $\lm_{M/a}(X'\cap (F-\{a\}))\geq t$. Then, by submodularity 
$\lm_{M/a}(X'\cup (F-\{a\}))\leq s-1$ so that $\lm_M(X'\cup F)\leq s$. Consider the partition 
$(X'\cup (F-\{a\}),\{a\},Y'\cap G)$ of $E(M)$. We have $a\in\cl(X'\cup (F-\{a\}))$. By (\ref{stat-2}), 
$a\notin\cl(Y'\cap G)$. Hence $a\in\cl^*(X'\cup (F-\{a\}))$. By Lemma~\ref{up-down}
$\lm_M(X'\cup F)<\lm_M(X'\cup (F-\{a\}))$, so that $\lm_M(X'\cup F)\leq s-1$.

The set $X'\cup F$ is either $\cT$-small or $\cT$-large. Assume the latter.
Then $G\cap Y'$ is $\cT$-small so that $\{G\cap Y',X',F\}$ is a cover of $E(M)$ by
$\cT$-small sets, contradicting the fact that $\cT$ is a tangle. Hence
$X'\cup F$ is $\cT$-small. This implies that
$r_{M_\cT}(X)\leq \lm_M(X'\cup F)\leq s-1$, contradicting the assumption that
$r_{M_\cT}(X)=s$. The claim follows from this contradiction.
\end{proof}

\begin{sublemma}
\label{stat-5}
$\lm_M(X'\cap G)<s$.
\end{sublemma}

\begin{proof}
Assume that $\lm_M(X'\cap G)\geq s$.  By submodularity we have
$\lm_M(X'\cup G)\leq t$. Since $a\in\cl_M(X')$, we have $a\in\cl_M(X'\cup G)$.
Hence $\lm_M(X'\cup G\cup\{a\})\leq t$. 
By symmetry $\lm_M(Y'\cap F)=\lm_M(X'\cup G\cup\{a\})$.
Hence $\lm_M(Y'\cap F)\leq t$. By (\ref{stat-4}) and the fact that
$F-\{a\}$ is titanic in $M/a$, we have $\lm_{M/a}(Y'\cap F)\geq t$. Therefore $\lm_M(Y'\cap F)=t$.

Since $\lm_M(X'\cup G)\leq t$, we have $\lm_M((Y'\cap F)\cup\{a\})\leq t$. 
Hence $\lm_M((Y'\cap F)\cup\{a\})\leq \lm_M(Y'\cap F)$ so that $a\in\cl_M(Y'\cap F)$
or $a\in\cl^*_M(Y'\cap F)$. But $a\in\cl_M(X')$, so, by orthogonality, $a\notin\cl^*_M(Y'\cap F)$.
Hence $a\in\cl_M(Y'\cap F)$. Therefore $\lm_{M/a}(Y'\cap F)<t$ contradicting the fact that 
$F-\{a\}$ is titanic in $M/a$. Thus $\lm_M(X'\cap G)<s$ as required. 
\end{proof}

We have $X\subseteq X'\cap G$ and $\lm_M(X'\cap G)<s$. Since $X'$ is $\cT$-small, so too is
$X'\cap G$. This implies that $r_{M_\cT}(X)<s$ contradicting the fact that
$r_{M_\cT}(X)=s$. The lemma follows from this final contradiction.
\end{proof}

\section{Low-Rank Flats of the Tangle Matroid}

The earlier results apply in general with no assumption having been made about the rank
of the flat $F$ of the tangle matroid. For this paper, we need to consider the cases when
$F$ has rank at most 2 and that is the focus of this section.

Note that Lemmas~\ref{ambiguous}, \ref{canon}, \ref{selection}, \ref{tit-tangle}, \ref{freer},
\ref{breadth-down} and \ref{static} all concern tangles in a minor obtained by 
contracting an element of a matroid. The reason for the focus on contraction was that
it facilitated more natural geometric arguments in proofs. Since the tangles in matroids are invariant
under duality, each of the above mentioned lemmas has an obvious dual which concerns tangles
in a minor obtained by deleting an element of the matroid. In what follows we apply the dual
version of the lemmas.

\subsection*{Rank-$0$ flats of the tangle matroid.}

\begin{lemma}
\label{loops-away}
Let $\cT$ be a tangle of order $k\geq 2$ in a matroid $M$. If $a$ is a loop of 
$M_\cT$, then $\cT$ generates a tangle $\cT_a$ of order $k$ in $M\ba a$. Moreover,
$\breadth(\cT_a)=\breadth(\cT)$.
\end{lemma}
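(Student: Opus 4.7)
The plan is to apply the dual versions of Lemmas~\ref{tit-tangle}, \ref{breadth-down}, and \ref{static} to the rank-$0$ flat $F = \cl_{M_\cT}(\emptyset)$ of $M_\cT$. Since $a$ is a loop of $M_\cT$, we have $a \in F$, and $F$ is a proper flat (as $r_{M_\cT}(F) = 0 < k - 1$). Hence, by Lemma~\ref{basic-stuff}(ii), $\lm_M(F) = 0$, so $F$ is a union of components of $M$. In particular, the component of $M$ containing $a$ is entirely contained in $F$. Thus, when $a$ is deleted, the set $F - \{a\}$ remains a union of components of $M\ba a$, so $\lm_{M\ba a}(F - \{a\}) = 0 = \lm_M(F)$.

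Since the connectivity function is nonnegative, no partition of $F - \{a\}$ into three parts can have each part with $\lambda$-value strictly less than $0$, so $F - \{a\}$ is vacuously titanic in $M\ba a$. The dual of Lemma~\ref{tit-tangle} then yields a tangle $\cT_a$ of order $k$ in $M\ba a$ that is generated by $\cT$.

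To establish $\breadth(\cT_a) = \breadth(\cT)$, the dual of Lemma~\ref{breadth-down} immediately gives $\breadth(\cT_a) \leq \breadth(\cT)$. For the reverse inequality, choose $Z \subseteq E(M)$ with $M_\cT|Z \cong U_{k-1,|Z|}$ and $|Z| = \breadth(\cT)$. Since $k \geq 2$, the matroid $U_{k-1,|Z|}$ has no loops, so $Z \cap F = \emptyset$; that is, $Z \subseteq G := E(M) - F$. For each subset $X$ of $Z$, the dual of Lemma~\ref{static}(ii) gives $r_{M_\cT/a}(X) = r_{M_{\cT_a}}(X)$. Since $a$ is a loop of $M_\cT$, we have $M_\cT/a = M_\cT\ba a$, and $(M_\cT\ba a)|Z = M_\cT|Z \cong U_{k-1,|Z|}$. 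Hence $M_{\cT_a}|Z \cong U_{k-1,|Z|}$, so $\breadth(\cT_a) \geq |Z| = \breadth(\cT)$.

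The only potential obstacle is verifying the hypotheses of the dual of Lemma~\ref{tit-tangle}, but this turns out to be trivial in the rank-$0$ case: because $F$ has $\lambda$-value $0$ in $M$, it is a union of components, deleting $a$ preserves this structure, and the titanic condition holds vacuously.
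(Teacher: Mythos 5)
Your proof is correct and follows essentially the same route as the paper: both verify that the rank-$0$ flat $F$ of $M_\cT$ satisfies the hypotheses of (the dual of) Lemma~\ref{tit-tangle} — with the titanic condition holding vacuously since $\lm_{M\ba a}(F-\{a\})=0$ — and then preserve breadth by applying Lemma~\ref{static}(ii) to a witness, which must avoid $F$ because a spanning uniform restriction of the rank-$(k-1)$ matroid $M_\cT$ contains no loops. The only difference is that the paper first disposes of the case $k=2$ by an explicit construction of the generated tangle before invoking Lemma~\ref{tit-tangle}, whereas you apply that lemma uniformly for all $k\ge 2$, which is consistent with its stated hypotheses.
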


\begin{proof}
Let $F$ be the unique rank-0 flat of $M_\cT$, that is, $F$ is the set of loops of 
$M_\cT$. Observe that $F-\{a\}$ is titanic in $M\ba a$, even when $F-\{a\}$ is empty
as there are no sets in a matroid whose rank is less than zero. Suppose first that $k = 2$. Then 
$M$ has a connected component $X$ with at least two elements such that 
$\cT = \{A \subseteq E(M): \lm(A)=0,A \cap X = \emptyset\}$. Moreover, $F = E(M) - X$. Then 
$\{A \subseteq E(M\ba a): \lm_{M\ba a}(A)=0,A \cap X = \emptyset\}$ is a tangle $\cT_a$ of order $2$ in 
$M\ba a$ that is generated by $\cT$. 
On the other hand, when $k \ge 2$, Lemma~\ref{tit-tangle} gives that 
$\cT$ generates a tangle $\cT_a$ in $M\ba a$. For arbitrary $k\ge 2$, let $U$ be the ground set of a maximal
spanning uniform submatroid of $M_\cT$. Then $U\subseteq E(M)-F$. By 
Lemma~\ref{static}(ii), $M_\cT|U=M_{\cT_a}|U$. Hence $\breadth(\cT_a)=\breadth(\cT)$.
\end{proof}

\begin{lemma}
\label{in-a-component}
Let $\cT$ be a tangle of order $k\geq 2$ in a matroid $M$  and let $F$ be the 
set of loops of $M_{\cT}$. Then $M_\cT\ba F$ is connected. Moreover,
$\cT$ generates a tangle $\cT'$ in $M\ba F$ for which $\breadth(\cT')=\breadth(\cT)$.
\end{lemma}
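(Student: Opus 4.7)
The plan is to handle the two claims separately. For showing $M_\cT\ba F$ is connected, I will exploit the roundness of $M_\cT$ from Corollary~\ref{round}. For the generation of $\cT'$ and the equality of breadths, I will iterate Lemma~\ref{loops-away} along the elements of $F$ and compose the steps using Lemma~\ref{transitive}.

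For connectivity, I would argue by contradiction: suppose $M_\cT\ba F$ decomposes as a direct sum of its restrictions to nonempty subsets $A$ and $B$. Because $F$ is exactly the set of loops of $M_\cT$, the matroid $M_\cT\ba F$ is loopless, so both summands have positive rank and therefore each admits a hyperplane, say $H_A$ of $(M_\cT\ba F)|A$ and $H_B$ of $(M_\cT\ba F)|B$. A routine direct-sum check shows that $H_A\cup B\cup F$ and $A\cup H_B\cup F$ are each flats of $M_\cT$ of rank $k-2$, so each is a hyperplane of $M_\cT$, and their union is all of $E(M_\cT)$. This contradicts roundness.

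For the second statement, enumerate $F=\{f_1,\ldots,f_m\}$ and set $M^{(0)}=M$ and $\cT^{(0)}=\cT$. I would inductively define $M^{(i)}=M^{(i-1)}\ba f_i$, and take $\cT^{(i)}$ to be the tangle of order $k$ in $M^{(i)}$ obtained by applying Lemma~\ref{loops-away} to the loop $f_i$ of $M_{\cT^{(i-1)}}$; this step also preserves breadth.

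The main obstacle is verifying that $f_i$ is indeed a loop of $M_{\cT^{(i-1)}}$ for each $i\geq 2$: although the $f_j$ begin as loops of $M_\cT$, each generation step could, a priori, rearrange the tangle matroid. Here Lemma~\ref{freer} is the decisive tool, as it provides a rank-preserving weak map from $M_{\cT^{(i-1)}}\ba f_i$ onto $M_{\cT^{(i)}}$, so any loop of the former persists as a loop of the latter. By induction on $i$, every element of $F-\{f_1,\ldots,f_i\}$ remains a loop of $M_{\cT^{(i)}}$, and the iteration runs to completion. Finally, Lemma~\ref{transitive} chains the single-element generations together to show that $\cT'=\cT^{(m)}$ is generated by $\cT$ in $M\ba F$, and since breadth is preserved at each step, $\breadth(\cT')=\breadth(\cT)$.
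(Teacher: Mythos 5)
Your proposal is correct and follows essentially the same route as the paper: roundness of $M_\cT$ (Corollary~\ref{round}) forces $M_\cT\ba F$ to be connected, and the second claim is obtained by iterating Lemma~\ref{loops-away} and chaining with Lemma~\ref{transitive}. You in fact supply two details the paper leaves implicit — the explicit construction of two covering hyperplanes from a putative direct-sum decomposition, and the verification via the weak-map relation of Lemma~\ref{freer} that the remaining elements of $F$ stay loops after each deletion — both of which are sound.
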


\begin{proof}
If $M_\cT\ba F$ is not connected, then $M_\cT$ is not round, a contradiction to Corollary~\ref{round}. Thus $M_\cT\ba F$ is indeed
connected. The remainder of the lemma follows by repeated application of Lemma~\ref{loops-away}
and Lemma~\ref{transitive}.
\end{proof}

A tangle $\cT$ of order $k$ in a matroid $M$ is {\em breadth-critical} if,  whenever $N$ is a proper minor of $M$ and $\cT$ generates a tangle $\cT'$ of order $k$ in $N$, we have 
$\breadth(\cT')<\breadth(\cT)$.
The next corollary is immediate.

\begin{corollary}
\label{get-con}
If $\cT$ is a breadth-critical tangle in a matroid $M$, then $M$ is connected.
\end{corollary}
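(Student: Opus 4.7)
The plan is to prove the contrapositive: if $M$ is disconnected, then $\cT$ is not breadth-critical. The strategy is to exhibit a proper minor on which $\cT$ generates a tangle of order $k$ with the same breadth, which immediately contradicts the definition.

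Assume $M$ is disconnected, so there is a partition $(E_1,E_2)$ of $E(M)$ into nonempty sets with $\lm_M(E_1)=0$. Since $0<k-1$, axiom (T2) forces one of $E_1,E_2$, say $E_1$, to lie in $\cT$. Applying Lemma~\ref{basic-stuff}(i) to this $\cT$-small set yields
$$r_{M_\cT}(E_1)\;\leq\;\lm_M(E_1)\;=\;0,$$
so every element of $E_1$ is a loop of the tangle matroid $M_\cT$. In particular, the set $F$ of loops of $M_\cT$ contains the nonempty set $E_1$ and is therefore nonempty.

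Now I would invoke Lemma~\ref{in-a-component}: since $F\neq\emptyset$, the matroid $M\ba F$ is a proper minor of $M$, and $\cT$ generates a tangle $\cT'$ on $M\ba F$ with $\breadth(\cT')=\breadth(\cT)$. This contradicts the assumption that $\cT$ is breadth-critical, completing the proof. There is essentially no obstacle here beyond correctly choosing the orientation of the trivial separation $(E_1,E_2)$ under (T2); once the $\cT$-small side is identified, Lemma~\ref{basic-stuff}(i) produces loops of $M_\cT$ for free, and Lemma~\ref{in-a-component} supplies the breadth-preserving proper minor.
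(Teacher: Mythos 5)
Your argument is correct and is exactly the ``immediate'' deduction the paper intends: a $0$-separating $\cT$-small side of a disconnection consists of loops of $M_\cT$, so the loop set $F$ is nonempty and Lemma~\ref{in-a-component} yields a breadth-preserving tangle generated in the proper minor $M\ba F$, contradicting breadth-criticality. The only blemish is a citation: the bound $r_{M_\cT}(E_1)\leq\lm_M(E_1)$ for the $\cT$-small set $E_1$ comes from Lemma~\ref{tangle-matroid-properties}(i), not Lemma~\ref{basic-stuff}(i).
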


\subsection*{Rank-$1$ flats of the tangle matroid.}
If $\cT$ is a tangle of order at least 3 in a connected matroid $M$, then $M_\cT$ is loopless. It follows
that rank-1 flats of $M_\cT$ are parallel classes. Put in other words, the 
parallel classes of $M_\cT$ are the maximal $\cT$-small 2-separating sets of $M$.
The next lemma is clear.

\begin{lemma}
\label{2-tit-1}
Let $F$ be a $2$-separating set of a connected matroid $M$. Then $F$ is titanic.
\end{lemma}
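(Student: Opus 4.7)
The plan is to proceed by cases on $\lm(F)$ and use the alternative characterisation of titanic sets via covers given in Lemma~\ref{tit-cover}. Since $F$ is $2$-separating, we have $\lm(F)\leq 1$.

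First I would dispense with the degenerate case $\lm(F)=0$. The titanic condition requires a cover $\{X,Y,Z\}$ of $F$ with $\lm(X),\lm(Y),\lm(Z)<\lm(F)$; since connectivity is non-negative, no such sets exist and $F$ is trivially titanic.

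The substantive case is $\lm(F)=1$. Suppose for a contradiction, using Lemma~\ref{tit-cover}, that there exist $X,Y,Z\subseteq F$ with $X\cup Y\cup Z=F$ and $\lm(X)=\lm(Y)=\lm(Z)=0$. The crux is the observation that in a connected matroid $M$, any subset $A$ with $\lm(A)=0$ satisfies $r(A)+r(E(M)-A)=r(M)$, which forces $A$ to be a union of connected components of $M$; hence $A\in\{\emptyset,E(M)\}$. Applying this to each of $X$, $Y$, $Z$, and noting that $X\cup Y\cup Z=F$ must also be in $\{\emptyset,E(M)\}$: if $F=E(M)$ then $\lm(F)=0$, contradicting our case; hence $X=Y=Z=F=\emptyset$, which again gives $\lm(F)=0$, a contradiction.

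There is no real obstacle here beyond invoking the correct characterisation; the main step is simply recognising that the hypothesis of connectedness together with $\lm(X)=0$ leaves no room for $X$ to be a nontrivial proper subset of $F$. The lemma then follows immediately in both cases.
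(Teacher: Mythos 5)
Your proof is correct: the paper simply states that this lemma is clear and gives no proof, and your argument is exactly the intended one — in a connected matroid the only $0$-separating sets are $\emptyset$ and $E(M)$, so no nontrivial cover of a $1$-separating $F$ by $0$-separating sets can exist, and the $\lm(F)=0$ case is vacuous. Nothing is missing.
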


\begin{lemma}
\label{2-tit-2}
Let $\cT$ be a tangle of order at least $3$ in a connected matroid $M$ and let 
$F$ be a $\cT$-small $2$-separating set of $M$ with $|F|\geq 2$. Assume that
$a\in F$ and $M\ba a$ is connected.
Then $\cT$ generates a tangle $\cT_a$ in $M\ba a$. Moreover, 
$M_\cT\ba a=M_{\cT_a}$ and
$\breadth(\cT_a)=\breadth(\cT)$.
\end{lemma}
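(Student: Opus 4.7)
The plan is to apply the dual version of Lemma~\ref{tit-tangle} to a proper flat of $M_\cT$ that contains $F$, and then to upgrade the freer-relation supplied by Lemma~\ref{freer} to the equality $M_\cT\ba a=M_{\cT_a}$ using Lemma~\ref{static}. Since $M$ is connected and $\cT$ has order at least $3$, $M_\cT$ is loopless: any loop of $M_\cT$ would lie in a $\cT$-small set of $M$-connectivity $0$, and in a connected matroid $M$ such a set would have to be $\emptyset$ or $E(M)$, both of which are forbidden by the tangle axioms. Because $F$ is $\cT$-small with $\lm_M(F)=1$ and $|F|\geq 2$, Lemma~\ref{tangle-matroid-properties}(i) yields $r_{M_\cT}(F)=1$, so $F$ lies in a parallel class $F^*$ of $M_\cT$. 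This $F^*$ is a proper rank-$1$ flat of $M_\cT$, hence by Lemma~\ref{basic-stuff}(ii) it is $\cT$-small with $\lm_M(F^*)=1$, and $a\in F\subseteq F^*$.

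To apply the dual of Lemma~\ref{tit-tangle} with the rank-$1$ flat $F^*$ and the element $a$, I verify the two remaining hypotheses. Lemma~\ref{low-con} gives $\lm_{M\ba a}(F^*-\{a\})\leq \lm_M(F^*)=1$, while the connectivity of $M\ba a$, together with both $F^*-\{a\}$ and $E(M)-F^*$ being nonempty, forces $\lm_{M\ba a}(F^*-\{a\})\geq 1$; Lemma~\ref{2-tit-1} then supplies that $F^*-\{a\}$ is titanic in $M\ba a$. Hence $\cT$ generates a tangle $\cT_a$ of order $k$ in $M\ba a$.

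For $M_\cT\ba a=M_{\cT_a}$, Lemma~\ref{freer} shows that $M_\cT\ba a$ is freer than $M_{\cT_a}$ and both have rank $k-1$, so it suffices to show $r_{M_\cT}(X)\leq r_{M_{\cT_a}}(X)$ for each $X\subseteq E(M)-\{a\}$. If $X\subseteq E(M)-F^*$, this is immediate from the dual of Lemma~\ref{static}(ii). Otherwise $X$ meets $F^*-\{a\}$; since $F^*-\{a\}$ is $\cT$-weak with $\lm_{M\ba a}(F^*-\{a\})\leq k-2$, Lemma~\ref{add-weak} places $F^*-\{a\}$ in $\cT_a$, giving $r_{M_{\cT_a}}(F^*-\{a\})\leq 1$, and because $M_{\cT_a}$ is also loopless (by the same argument as for $M_\cT$) the elements of $F^*-\{a\}$ are pairwise parallel in $M_{\cT_a}$, as they obviously are in $M_\cT\ba a$. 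Thus, for any $b\in X\cap(F^*-\{a\})$, adjoining $F^*-\{a\}$ to $X$ preserves rank in both tangle matroids, and Lemma~\ref{static}(i) applied to $X\cup(F^*-\{a\})$ delivers the required equality. The breadth claim then follows: given a spanning uniform submatroid $U$ of $M_\cT$ of size $\breadth(\cT)$, if $a\in U$ I swap $a$ for any $b\in F^*-\{a\}$ (nonempty since $|F|\geq 2$), which, because $a$ and $b$ are parallel in $M_\cT$, again produces a spanning uniform submatroid of $M_\cT$ of the same size, now lying in $M_\cT\ba a=M_{\cT_a}$. The main obstacle is the equality $M_\cT\ba a=M_{\cT_a}$: Lemma~\ref{static} directly controls only sets containing $F^*-\{a\}$ or disjoint from $F^*$, so sets that properly meet $F^*-\{a\}$ must be bridged by the parallel-class observation above.
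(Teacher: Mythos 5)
Your proof is correct and follows essentially the same route as the paper: pass to the maximal $\cT$-small $2$-separating set (the parallel class of $M_\cT$ containing $F$), apply the dual of Lemma~\ref{tit-tangle} via Lemma~\ref{2-tit-1}, and then upgrade the freer-relation to equality by handling sets that properly meet $F^*-\{a\}$ through the parallel-class observation together with Lemma~\ref{static}. The only cosmetic differences are that you establish that $F^*-\{a\}$ consists of parallel elements of $M_{\cT_a}$ via Lemma~\ref{add-weak} rather than via the freer-relation, and that you spell out the verification of $\lm_{M\ba a}(F^*-\{a\})=\lm_M(F^*)$, which the paper leaves implicit.
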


\begin{proof}
We may assume that $F$ is a maximal $\cT$-small 2-separating set in $M$.
Then $F$ is a rank-1 flat of $M_\cT$. Since $M$ is connected, $M_\cT$ is loopless.
Hence $F$ is a parallel class of $M_\cT$. By Lemma~\ref{2-tit-1}, 
$F-\{a\}$ is titanic in $M\ba a$. Hence, by Lemma~\ref{tit-tangle}, $\cT$ generates a tangle 
$\cT_a$ in $M\ba a$. Since $M\ba a$ is connected, $M_{\cT_a}$ is loopless.
By Lemma~\ref{freer}, $M_\cT\ba a$ is freer than $M_{\cT_a}$, so 
$F-\{a\}$ is a  set of parallel elements in  $M_{\cT_a}$.

Say $X\subseteq E(M)-\{a\}$. By Lemma~\ref{static}, $X$ has the same rank in 
both $M_\cT\ba a$ and $M_{\cT_a}$ unless both $X\cap F$ and $F-(X \cup \{a\})$ are nonempty.
In the exceptional case, since $F- \{a\}$ is a set of parallel elements in each of $M_{\cT \ba a}$ and $M_{\cT_a}$, 
we see that $r_{M_{\cT_a}}(X)=r_{M_{\cT_a}}(X\cup (F - \{a\}))=r_{M_\cT\ba a}(X\cup (F - \{a\}))
=r_{M_\cT\ba a}(X)$. We deduce that $M_\cT\ba a=M_{\cT_a}$. 

Since $a$ is a member of a non-trivial parallel class of $M_{\cT}$, there is a maximal spanning 
uniform restriction $U$ of $M_\cT$ that avoids $a$. Since $M_\cT\ba a=M_{\cT_a}$, we see that 
$U$ is a maximal spanning uniform restriction of $M_{\cT_a}$. 
Hence $\breadth(\cT_a)=\breadth(\cT)$.
\end{proof}

\begin{corollary}
\label{2-tit-3}
Let $\cT$ be a tangle of order $k \ge 3$ in a matroid $M$.  If $M$ is not 
$3$-connected, then $M$ has an element $a$ such that, for some $N$ in $\{M\ba a, M/a\}$, 
the tangle $\cT$ generates 
a $k$-tangle $\cT'$ in $N$ with  $\breadth(\cT')=\breadth(\cT)$.
\end{corollary}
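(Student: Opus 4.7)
The plan is to split on the connectivity of $M$. Since $M$ is not $3$-connected, either $M$ is disconnected or $M$ is connected and admits an exact $2$-separation $(A,B)$ with $|A|,|B|\ge 2$; in each case, I will exhibit a single element $a$ and use the already-developed machinery to produce the desired tangle in a single-element minor.

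First, suppose $M$ is disconnected and let $C$ be a connected component of $M$. Then $\lm_M(C)=0\le k-2$, so (T2) forces $C$ or $E(M)-C$ into $\cT$. If $E(M)-C\in\cT$, then any other component $C'$ of $M$ satisfies $C'\subseteq E(M)-C$ and $\lm_M(C')=0\le k-2$, so (T3a) yields $C'\in\cT$. Either way, at least one component—continue to call it $C$—is $\cT$-small. By Lemma~\ref{tangle-matroid-properties}(i), every $a\in C$ satisfies $r_{M_\cT}(\{a\})\le\lm_M(C)=0$, so $a$ is a loop of $M_\cT$. Applying Lemma~\ref{loops-away} to any such $a$ produces a $k$-tangle $\cT'$ in $N=M\ba a$ generated by $\cT$, with $\breadth(\cT')=\breadth(\cT)$.

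In the remaining case, $M$ is connected with a $2$-separation $(A,B)$ satisfying $|A|,|B|\ge 2$. By (T2), exactly one of $A,B$ is $\cT$-small; relabel so that $A\in\cT$, and enlarge $A$ to a maximal $\cT$-small $2$-separating set $F$, noting $|F|\ge 2$. By the standard fact that, in every connected matroid, for each element $a$ at least one of $M\ba a$ and $M/a$ is connected, I may choose $a\in F$ so that one of the two single-element minors at $a$ is connected. If $M\ba a$ is connected, Lemma~\ref{2-tit-2} delivers the required $\cT'$ in $N=M\ba a$. If only $M/a$ is connected, I invoke the dual of Lemma~\ref{2-tit-2}—legitimate because $\cT$ is equally a tangle in $M^*$, the set $F$ is still $\cT$-small and $2$-separating in $M^*$, and $M^*\ba a=(M/a)^*$ is connected—to obtain the corresponding $\cT'$ in $N=M/a$.

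The only delicate point is verifying the hypotheses of Lemma~\ref{2-tit-2} and its dual, namely that $F-\{a\}$ is titanic in the chosen minor; this is exactly Lemma~\ref{2-tit-1} applied in that minor. Preservation of breadth is then automatic from the internal structure of Lemma~\ref{2-tit-2}, whose proof in turn rests on Lemmas~\ref{freer} and~\ref{static}. So the entire argument reduces to the clean case split above, and the main obstacle—ensuring that a suitable single-element removal is available without collapsing the relevant $\cT$-small flat—is handled by Tutte's deletion/contraction dichotomy for connected matroids.
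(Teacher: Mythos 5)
Your proof is correct and follows essentially the same route as the paper: the disconnected case is handled by observing that a $\cT$-small component consists of loops of $M_\cT$ and invoking Lemma~\ref{loops-away}, and the connected case uses Tutte's deletion/contraction dichotomy together with Lemma~\ref{2-tit-2} (appealing to duality when only $M/a$ is connected). The only cosmetic differences are that you enlarge the $\cT$-small side to a maximal $2$-separating set (unnecessary, since Lemma~\ref{2-tit-2} does this internally) and that you spell out the dual application that the paper dismisses with ``we lose no generality.''
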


\begin{proof}
Assume that $M$ is not $3$-connected.
Then, for some $t$ in $\{0,1\}$, there is a    partition $(X,Y)$  of $E(M)$ with $\lm(X)=t$ and $|X|,|Y|>t$. 
By the definition of a tangle, we may assume that $X\in\cT$. If $t = 0$, then $r_{M_{\cT}}(X) = 0$ and taking $a$ in $X$, the result follows by 
Lemma~\ref{loops-away}. Thus we may assume that $t = 1$, so $M$ is connected. 
Then, for $a$ in $X$, 
 by a well-known  result of Tutte~\cite{wT66}, either $M\ba a$ or $M/a$ is connected. 
We lose no generality in assuming that $M\ba a$ is connected.
By Lemma~\ref{2-tit-2}, $\cT$ generates a tangle $\cT_a$ in $M\ba a$ such
that $\breadth(\cT_a)=\breadth(\cT)$. 
\end{proof}

\subsection*{Rank-$2$ flats of the tangle matroid.} 

Let $\cT$ be a tangle of order at least 4 in a matroid $M$. Assume that 
$\cT$ is breadth-critical. Then, by Corollaries~\ref{get-con} and \ref{2-tit-3}, $M$ is 
$3$-connected. By Corollary~\ref{round} and Lemma~\ref{3-con}, $M_\cT$ is 3-connected and round. 
Our goal is to bound the size of a rank-2 flat of $M_\cT$.
Say that $F$ is such a flat. Then $F$ is a maximal $\cT$-small $3$-separating set.
By Corollary~\ref{full-closed}, $F$ is fully closed in $M$. We first note an obvious lemma.

\begin{lemma}
\label{3-tit}
Let $F$ be an exactly $3$-separating set in a $3$-connected matroid $M$. Then
$F$ is titanic if and only if $|F|\geq 4$.
\end{lemma}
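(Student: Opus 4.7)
The plan is to prove both directions by exploiting the fact that, in a $3$-connected matroid $M$ (necessarily on at least four elements here, since $F$ is exactly $3$-separating forces $|F|,|E(M)-F|\geq 2$ by the rank inequality $r(F)+r(E(M)-F) = r(M) + 2$), any set $A$ with $\lm(A)\leq 1$ must satisfy $|A|\leq 1$ or $|E(M)-A|\leq 1$.

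For the ``only if'' direction, I would argue the contrapositive. Assume $|F|\leq 3$. Since $\lm(F)=2$ and $\lm(\emptyset)=0$, we must have $|F|\in\{2,3\}$. In either case, partition $F$ into its singletons, padding with empty sets to reach a partition $\{X,Y,Z\}$ of size three; this is permitted since the paper's convention allows empty parts. For every element $x$ of $M$, $\lm(\{x\})\le 1$ by the trivial bound $r(\{x\})+r(E(M)-\{x\})-r(M)\le 1$, and $\lm(\emptyset)=0$. Thus $\lm(X),\lm(Y),\lm(Z)\le 1 < 2=\lm(F)$, certifying that $F$ is not titanic.

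For the ``if'' direction, assume $|F|\geq 4$ and suppose for contradiction that $F$ is not titanic, witnessed by a partition $\{X,Y,Z\}$ of $F$ with $\lm(X),\lm(Y),\lm(Z)\le 1$. First I would establish $|E(M)-F|\geq 2$: from $\lm(F)=r(F)+r(E(M)-F)-r(M)=2$ and $r(F)\leq r(M)$, we get $r(E(M)-F)\geq 2$, so $|E(M)-F|\geq 2$. Now for any $A\subseteq F$, $|E(M)-A|\ge |E(M)-F|\geq 2$. Since $M$ is $3$-connected, $\lm(A)\le 1$ forces $|A|\le 1$. Applying this to each of $X,Y,Z$ yields $|F|=|X|+|Y|+|Z|\le 3$, contradicting $|F|\geq 4$.

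The only real subtlety is the lower bound on $|E(M)-F|$, which is needed so that the $3$-connectivity of $M$ can be used to force each of $X,Y,Z$ to be small rather than cosmall; once this is in hand, the rest is bookkeeping.
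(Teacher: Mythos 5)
Your proof is correct and rests on exactly the same observation as the paper's (which simply notes that any $A\subseteq F$ with $\lm(A)<\lm(F)=2$ must satisfy $|A|\leq 1$ by $3$-connectivity, since $|E(M)-A|\geq|E(M)-F|\geq 2$). You have merely spelled out both directions and the bound $|E(M)-F|\geq 2$ in more detail; no changes are needed.
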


\begin{proof}
Say $A\subseteq F$ and $\lm(A)<\lm(F)$. Then, since $M$ is 3-connected,
$|A|\leq 1$. The lemma follows from this observation.
\end{proof}

The next lemma relies on some more definitions. Let $M$ be a matroid. Elements $x$ and $y$ of $M$ are {\em clones}
if the function that interchanges $x$ and $y$ and fixes every  element of $E(M) - \{x,y\}$
is an automorphism of $M$. An element $z$ of $M$ is {\em fixed in $M$} if there is no
single-element extension $M'$ of $M$ by an element $z'$ with the property that $z$ and $z'$  are
clones in $M'$ and $\{z,z'\}$ is independent in $M'$.
Say $Z\subseteq E(M)$. Then an element
$z\in Z$ is {\em freely placed} on $Z$ if $z\in\cl(Z-\{z\})$ and, whenever $C$
is a circuit of $M$ containing $z$, the closure of $C$ contains $Z$. 

Our interest is in a special case of rank-2 flats in $3$-connected matroid,
and we focus on that. We omit the straightforward proof of the next result.

\begin{lemma}
\label{fixed}
Let $F$ be a rank-$2$ flat of a $3$-connected matroid $M$ where $|F|\geq 3$.
\begin{itemize}
\item[(i)] If $a\in F$, then $a$ is freely placed on $F$ if and only if $a$ is not fixed in $M$.
\item[(ii)] If $a\in F$, then $a$ is fixed in $M$ if and only if $M$ has a flat $A$ containing $a$ such that
$a\in\cl(A-\{a\})$ and $F\cap A=\{a\}$.
\item[(iii)] If $a$ and $b$ are distinct elements of $F$, then $a$ and $b$ are clones 
in $M$ if and only
if both $a$ and $b$ are freely placed on $F$.
\end{itemize}
\end{lemma}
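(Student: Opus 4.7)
My plan is to prove (i) using the modular-cut construction of single-element extensions, deduce (ii) from (i) by translating the flat condition into a circuit condition, and prove (iii) by a direct swap-is-automorphism argument.

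For (i)$(\Leftarrow)$, given an extension $M'$ with $a,a'$ clones and $\{a,a'\}$ independent, and a circuit $C$ of $M$ through $a$, the clone property gives that $(C-\{a\})\cup\{a'\}$ is a circuit of $M'$. Hence $\cl_{M'}(C-\{a\})$ contains both $a$ and $a'$, and so contains the rank-$2$ flat of $M'$ that they span, which includes $F$. Restricting to $E(M)$ yields $F\subseteq\cl_M(C)$. For (i)$(\Rightarrow)$, I take $M'$ to be the single-element extension determined by the principal modular cut consisting of all flats of $M$ containing $F$, so that $a'\in\cl_{M'}(X)$ iff $\cl_M(X)\supseteq F$; in particular $\{a,a'\}$ is independent. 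Checking that the transposition of $a$ and $a'$ is an automorphism of $M'$ reduces, via the rank function, to showing that $a\in\cl_M(Y)\iff F\subseteq\cl_M(Y)$ for all $Y\subseteq E(M)-\{a\}$; the nontrivial implication uses the freely placed hypothesis applied to a circuit through $a$ inside $Y\cup\{a\}$.

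For (ii), I will show that a flat $A$ as described exists iff $a$ is not freely placed on $F$; combined with (i), this gives the claim. If $A$ exists, then since $a\in\cl(A-\{a\})$ there is a circuit $C\subseteq A$ through $a$, and $\cl(C)\cap F\subseteq A\cap F=\{a\}$, so $F\not\subseteq\cl(C)$. Conversely, given a circuit $C$ through $a$ with $F\not\subseteq\cl(C)$, take $A=\cl(C)$; since $F$ is a rank-$2$ flat in the simple matroid $M$, the set $F\cap A$ cannot contain two elements (these would span $F$, forcing $F\subseteq A$), so $F\cap A=\{a\}$.

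For (iii)$(\Leftarrow)$, assume both $a$ and $b$ are freely placed. The swap of $a$ and $b$ preserves rank iff $a\in\cl(Y)\iff b\in\cl(Y)$ for all $Y\subseteq E(M)-\{a,b\}$: if $a\in\cl(Y)$, a witnessing circuit $C\subseteq Y\cup\{a\}$ satisfies $F\subseteq\cl(C)\subseteq\cl(Y\cup\{a\})=\cl(Y)$, so $b\in\cl(Y)$, and the reverse is symmetric. For (iii)$(\Rightarrow)$, assume $a,b$ are clones and, for contradiction, that $a$ is not freely placed. As in (ii), a witnessing circuit $C$ through $a$ has $\cl(C)\cap F=\{a\}$, so $b\notin C$; the clone property gives that $(C-\{a\})\cup\{b\}$ is a circuit. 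Strong circuit elimination at any $x\in C-\{a\}$ yields a circuit $C_3\subseteq(C\cup\{b\})-\{x\}$ containing $a$, and circuit minimality forces $b\in C_3$; then $b\in\cl(C_3-\{b\})\subseteq\cl(C)$, a contradiction. The main technical burden is the modular-cut construction and automorphism check in (i)$(\Rightarrow)$; everything else reduces to standard rank and closure bookkeeping and a single application of strong circuit elimination.
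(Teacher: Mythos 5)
The paper gives no proof of this lemma (it is explicitly omitted as ``straightforward''), so there is nothing to compare against; your argument must stand on its own, and it essentially does. Parts (ii) and (iii) are correct as written, and the reduction in (i)$(\Rightarrow)$ to the equivalence $a\in\cl_M(Y)\iff F\subseteq\cl_M(Y)$, settled via the principal modular cut of flats containing $F$, is sound. (You never verify the first clause of ``freely placed,'' namely $a\in\cl_M(F-\{a\})$, but this is automatic since $M$ is simple and any two elements of the rank-$2$ flat $F$ span it.)

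The one step that is asserted rather than proved is in (i)$(\Leftarrow)$: you claim that the rank-$2$ flat of $M'$ spanned by the independent pair $\{a,a'\}$ ``includes $F$.'' This is not a formality --- a priori $\cl_{M'}(\{a,a'\})$ could be skew to $F-\{a\}$ --- and it is exactly the point where the clone hypothesis must be used a second time. The fix is short: since $a\in\cl_{M}(F-\{a\})\subseteq\cl_{M'}(F-\{a\})$ and the set $F-\{a\}$ is fixed pointwise by the transposition of $a$ and $a'$, applying that automorphism gives $a'\in\cl_{M'}(F-\{a\})$ as well. Hence $\{a,a'\}$ is an independent pair inside the rank-$2$ flat $\cl_{M'}(F-\{a\})$, so $\cl_{M'}(\{a,a'\})=\cl_{M'}(F-\{a\})\supseteq F$, and the rest of your argument (that $\cl_{M'}(C-\{a\})$ contains this flat and restricts to $\cl_M(C)\supseteq F$) goes through. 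With that line inserted, the proof is complete.
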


Recall that, in a matroid $M$, the interior, $\inter_M(X)$, of a set $X$ is $X - (\cl_M(E(M) - X) \cup \cl^*_M(E(M) - X))$.

\begin{lemma}
\label{int-free}
Let $\cT$ be a tangle of order at least $4$ in a  
$3$-connected matroid $M$, and let $F$ be a maximal 
$\cT$-small $3$-separating set of $M$ with at least three elements. If $a\in \inter_M(F)$,
then $a$ is freely placed on the rank-$2$ flat $F$ in $M_\cT$.
\end{lemma}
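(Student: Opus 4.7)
The plan is to apply Lemma~\ref{fixed} to $M_\cT$ and argue by contradiction. Since $\cT$ has order at least $4$ and $M$ is $3$-connected, Lemma~\ref{3-con} yields that $M_\cT$ is $3$-connected, and by the discussion preceding Lemma~\ref{3-tit}, the maximal $\cT$-small $3$-separating set $F$ is a rank-$2$ flat of $M_\cT$ (any element of $\cl_{M_\cT}(F)-F$ would, via Lemma~\ref{basic-stuff}(i), sit inside a $\cT$-small $3$-separating set strictly larger than $F$). Because $F$ is fully closed in $M$ by Corollary~\ref{full-closed}, writing $G=E(M)-F$, the assumption $a\in\inter_M(F)$ means $a\notin \cl_M(G)\cup \cl^*_M(G)$, which, by Lemma~\ref{coclos} applied to the partition $(G,\{a\},F-\{a\})$ and its dual, is equivalent to
$$a\in \cl_M(F-\{a\})\cap \cl^*_M(F-\{a\}).$$

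Now suppose for a contradiction that $a$ is fixed in $M_\cT$. By Lemma~\ref{fixed}(ii), there is a flat $A$ of $M_\cT$ with $a\in A$, $a\in\cl_{M_\cT}(A-\{a\})$, and $A\cap F=\{a\}$. As $|F|\geq 3$, the set $F-\{a\}$ lies outside $A$, so $A$ is a proper flat of $M_\cT$. Set $s=r_{M_\cT}(A)$; then $r_{M_\cT}(A-\{a\})=s$, and, by Lemma~\ref{basic-stuff}(ii), $\lm_M(A)=s$.

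Since $A\cap F=\{a\}$, we have $F-\{a\}\subseteq E(M)-A$, so the preceding display gives $a\in \cl_M(E(M)-A)\cap\cl^*_M(E(M)-A)$. Applying Lemma~\ref{up-down}(i) and using $\lm_M(X)=\lm_M(E(M)-X)$ produces $\lm_M(A-\{a\})=\lm_M(A)-1=s-1$. Since $A-\{a\}\subseteq A\in\cT$ and $\lm_M(A-\{a\})\leq k-2$, axiom (T3a) makes $A-\{a\}$ a $\cT$-small set, and so Lemma~\ref{basic-stuff}(i) gives $r_{M_\cT}(A-\{a\})\leq \lm_M(A-\{a\})=s-1$, contradicting $r_{M_\cT}(A-\{a\})=s$. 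Hence no such $A$ exists, and Lemma~\ref{fixed}(i) implies $a$ is freely placed on $F$ in $M_\cT$. The only real obstacle is recognising that the interior hypothesis forces the connectivity drop $\lm_M(A-\{a\})=\lm_M(A)-1$ for every candidate flat $A$; once that is in hand, Lemma~\ref{basic-stuff} converts the drop into a rank contradiction inside $M_\cT$.
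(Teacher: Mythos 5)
Your proof is correct and follows essentially the same route as the paper: assume $a$ is fixed, invoke Lemma~\ref{fixed}(ii) to get a flat $A$ of $M_\cT$ with $A\cap F=\{a\}$ and $a\in\cl_{M_\cT}(A-\{a\})$, and then play $\lm_M$ off against $r_{M_\cT}$ via Lemmas~\ref{basic-stuff} and \ref{up-down}. The only cosmetic difference is the direction of the final contradiction — the paper deduces $\lm_M(A-\{a\})\geq\lm_M(A)$ from the tangle-matroid ranks and concludes $a\notin\inter_M(F)$, whereas you start from $a\in\inter_M(F)$, force the drop $\lm_M(A-\{a\})=\lm_M(A)-1$, and contradict $r_{M_\cT}(A-\{a\})=s$ — but this is the same argument read in the contrapositive.
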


\begin{proof}
Certainly $F$ is a rank-2 flat of $M_\cT$. Assume that $a$ is not freely placed on $F$ in 
$M_\cT$. By Lemma~\ref{fixed}(i) and (ii), $M_\cT$ has a flat $A$ of $M_\cT$ such that
$a\in\cl_{M_\cT}(A-\{a\})$ and $A\cap F=\{a\}$. Say $r_{M_\cT}(A)=t$.
Clearly $A$ is proper, so, by Lemma~\ref{basic-stuff}(ii), $\lm_M(A)=r_{M_\cT}(A)=t$.
We also have that $r_{M_\cT}(A-\{a\})=t$. Hence $\lm_M(A-\{a\})\geq 
\lm_M(A)$. It follows that either $a\in\cl_M(A-\{a\})$, in which case, 
$a\in\cl_M(E(M)-F)$; or $a\in\cl^*_M(A-\{a\})$, in which case, $a\in\cl^*(E(M)-F)$. Both 
cases imply that $a\notin\inter_M(F)$.
\end{proof}

Let $\cT$ be a tangle in the matroid $M$. We say that a subset $U$ of $E(M)$ 
is a {\em witness for $\breadth(\cT)$} if $M_\cT|U$ is a maximal spanning uniform restriction
of $M_\cT$.

\begin{lemma}
\label{free-breadth}
Let $\cT$ be a tangle of order at least four in a $3$-connected matroid $M$ and let 
$F$ be a rank-$2$ flat of $M_\cT$ with at least three elements. Let $U$ be a witness
for $\breadth(\cT)$. Then  
\begin{itemize}
\item[(i)] $|U\cap F|\leq 2$.
\item[(ii)] For $a\in U\cap F$ and $b\in F-U$, if $b$ is freely placed on $F$ in $M_\cT$,
then $(U-\{a\})\cup \{b\}$ is also a witness for $\breadth(\cT)$.
\end{itemize}
\end{lemma}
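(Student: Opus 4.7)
The plan is to exploit simplicity of $M_\cT$ (which follows from $3$-connectedness of $M$ and Lemma~\ref{3-con}) together with the fact that $M_\cT|U \cong U_{k-1,|U|}$ has all circuits of size exactly $k$, and to use the ``freely placed'' hypothesis on $b$ to control circuits of $M_\cT$ that pass through $b$.

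For part (i), since $F$ is a rank-$2$ flat of the simple matroid $M_\cT$, any three elements of $F$ form a $3$-element circuit. As $k-1\geq 3$, the uniform restriction $M_\cT|U$ has no circuit of size less than $k\geq 4$, and hence $U$ cannot contain three elements of $F$.

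For part (ii), set $U' = (U-\{a\}) \cup \{b\}$; since $b\in F - U$, we have $|U'| = |U|$, and it will suffice to show $M_\cT|U'\cong U_{k-1,|U|}$. First I would verify $r_{M_\cT}(U') = k-1$. When $|U|\geq k$, every element of $U$ lies in the closure of the rest of $U$, so already $r(U-\{a\}) = k-1$. The delicate case is $|U| = k-1$, where $U$ is a basis and $r(U-\{a\}) = k-2$; here I would argue that if $r(U') = k-2$, then $b\in\cl(U-\{a\})$, producing a circuit $C$ of $M_\cT$ with $b\in C\subseteq U'$. Freeness of $b$ on $F$ gives $\cl(C)\supseteq F \ni a$, so $a\in\cl(U')$ and hence $r(U\cup\{b\}) = r(U'\cup\{a\}) = r(U') = k-2$, contradicting $r(U) = k-1$.

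Next I would verify that every $(k-1)$-subset $I$ of $U'$ is independent. If $b\notin I$ then $I\subseteq U$, and since $M_\cT|U$ is uniform of rank $k-1$, $I$ is independent. If $b\in I$, suppose for contradiction that $I$ contains a circuit $C$ of $M_\cT$. Since $I-\{b\}\subseteq U$ has size $k-2$, it is independent, so $b\in C$. Then freeness gives $a\in \cl(C)\setminus C$, so there is a circuit $C^*\subseteq C\cup\{a\}$ with $a\in C^*$. If $b\notin C^*$, then $C^*\subseteq (C-\{b\})\cup\{a\}\subseteq U$ with $|C^*|\leq |C|\leq k-1$, violating uniformity of $M_\cT|U$. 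If $b\in C^*$, then $C^*\neq C$ both contain $b$, and weak circuit elimination produces a circuit inside $(C\cup C^*)-\{b\}\subseteq U$ of size at most $|C|\leq k-1$, again a contradiction. Hence $M_\cT|U'\cong U_{k-1,|U|}$, and since $|U'| = |U| = \breadth(\cT)$, $U'$ is a witness.

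The main obstacle I expect is the degenerate subcase $|U| = k-1$ in the spanning argument: this is the only place where one genuinely needs the freeness of $b$ on $F$ rather than merely $b\in F$, and it is exactly the situation where removing $a$ from $U$ drops the rank. Once this is handled, the uniformity verification reduces to two clean applications of circuit elimination driven by the same freely-placed property of $b$.
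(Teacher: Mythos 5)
Your proof is correct and rests on the same key idea as the paper's: a too-small circuit of $M_\cT|U'$ must contain $b$, and the freely-placed hypothesis then puts $a$ in its closure, producing a too-small dependent set inside $U$ and contradicting uniformity of $M_\cT|U$. The paper runs this as a single argument (any non-spanning circuit $C$ of $M_\cT|U'$ contains $b$, so $a\in\cl_{M_\cT}(C-\{b\})$ and $(C-\{b\})\cup\{a\}\subseteq U$ contains a non-spanning circuit), whereas you split the verification into the spanning and independence checks with a circuit-elimination step, but the substance is the same.
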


\begin{proof}
Since $\cT$ has order at least four, the rank of $M_\cT$ is at least three, so $r(U)\geq 3$. 
A uniform matroid of rank at least three cannot contain a triangle. Hence $|U\cap F|\leq 2$.

Say $a\in U\cap F$ and $b\in F-U$ where $b$ is freely placed on $F$ in $M_\cT$. 
Let $U'=(U-\{a\})\cup \{b\}$. Assume that $U'$ is not a 
witness for $\breadth(\cT)$. Then $M_\cT|U'$ is not a uniform matroid, so it contains a non-spanning
circuit $C$ that must contain $b$. But $b$ is freely placed on $F$, so 
$F\subseteq \cl_{M_\cT}(C-\{ b\})$. Hence $a\in\cl_{M_\cT}(C-\{b\})$, so  
$(C-\{b\})\cup\{a\}$ contains a circuit $C'$. But $C'\subseteq U$ and $C'$ does not span $U$.
This contradicts the assumption that $M_\cT|U$ is a uniform matroid.
\end{proof}

We are now able to prove lemmas that provide sufficient conditions for an element to 
be removed from our rank-2 flat $F$ while preserving the breadth of a tangle.

\begin{lemma}
\label{keep-int}
Let $\cT$ be a tangle of order $k\geq 4$ in a $3$-connected matroid $M$, let
$F$ be a maximal $\cT$-small $3$-separating set of $M$, and say $a\in F$. Assume that 
the following hold.
\begin{itemize}
\item[(i)] $M\ba a$ is $3$-connected.
\item[(ii)] $\lm_M(F)=\lm_{M\ba a}(F-\{a\})$.
\item[(iii)] $|F|\geq 5$.
\item[(iv)] $\inter_{M\ba a}(F-\{a\})\neq\emptyset$.
\end{itemize}
Then $\cT$ generates a tangle $\cT_a$ of order $k$ in $M\ba a$. Moreover, 
$\breadth(\cT_a)=\breadth(\cT)$.
\end{lemma}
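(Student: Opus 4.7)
First I would verify that $\cT$ generates a tangle $\cT_a$ of order $k$ in $M\ba a$. Since $F$ is a maximal $\cT$-small $3$-separating set of $M$, it is a rank-$2$ flat of $M_\cT$ with $\lm_M(F)=2\leq k-2$. Conditions (i) and (ii) make $F-\{a\}$ exactly $3$-separating in the $3$-connected matroid $M\ba a$; together with (iii) and Lemma~\ref{3-tit} this forces $F-\{a\}$ to be titanic in $M\ba a$. The dual of Lemma~\ref{tit-tangle} then yields $\cT_a$, and the dual of Lemma~\ref{breadth-down} gives $\breadth(\cT_a)\leq\breadth(\cT)$.

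For the reverse inequality I would produce a witness. By Corollary~\ref{full-closed}, $F$ is fully closed in $M$. Setting $G=E(M)-F$ and using $\cl_M(G)=\cl_{M\ba a}(G)$ together with $\cl^*_M(G)-\{a\}\subseteq \cl^*_{M\ba a}(G)$, one checks that $\inter_{M\ba a}(F-\{a\})\subseteq \inter_M(F)$, so hypothesis (iv) forces $\inter_M(F)\neq\emptyset$. Lemma~\ref{interior-decoration} applied with $|F|\geq 5$ excludes its cases (i)--(iii), giving $|\inter_M(F)|\geq 3$; by Lemma~\ref{int-free} each such element is freely placed on $F$ in $M_\cT$. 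Take a witness $U$ for $\breadth(\cT)$. By Lemma~\ref{free-breadth}(i) we have $|U\cap F|\leq 2$, so iterated application of Lemma~\ref{free-breadth}(ii) arranges $a\notin U$ and $U\cap F\subseteq\inter_M(F)$.

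The remaining task is to show that $M_{\cT_a}|U$ is uniform of rank $k-1$, which yields $\breadth(\cT_a)\geq |U|=\breadth(\cT)$. For $V\subseteq U$ I would compare $r_{M_{\cT_a}}(V)$ to $r_{M_\cT}(V)=\min(|V|,k-1)$ using the duals of Lemma~\ref{static} applied to $V\cup (F-\{a\})$ and to $V\cap G$. The case $V\cap F=\emptyset$ is immediate from Lemma~\ref{static}(ii); the case $V\cap F=U\cap F$ (writing $U\cap F=\{c_1,c_2\}$ in the principal case $|U\cap F|=2$) reduces to a submodularity argument between $V$ and $F-\{a\}$ in $M_{\cT_a}$, using that $M_{\cT_a}$ is simple by Lemma~\ref{3-con}, which also shows $U$ spans $M_{\cT_a}$.

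The main obstacle will be the intermediate case $|V\cap F|=1$, say $V=V_G\cup\{c_1\}$ with $V_G\subseteq G$. Submodularity of $r_{M_{\cT_a}}$ applied to $V_G\cup\{c_1\}$ and $V_G\cup\{c_2\}$, together with the previously computed values $r_{M_{\cT_a}}(V_G)=|V_G|$ and $r_{M_{\cT_a}}(V_G\cup\{c_1,c_2\})=\min(|V_G|+2,k-1)$, pins both $V_G\cup\{c_i\}$ to independence whenever $|V_G|\leq k-3$; the delicate subcase is $|V_G|=k-2$. Here I would apply Lemma~\ref{lines} in the $3$-connected matroid $M_\cT$ to the rank-$(k-2)$ flat $\cl_{M_\cT}(V_G)$ and the solid rank-$2$ flat $F$ (noting $F\not\subseteq\cl_{M_\cT}(V_G)$ because $r_{M_\cT}(V_G\cup F)\geq r_{M_\cT}(V_G\cup\{c_1,c_2\})=k-1$) to get $|F\cap\cl_{M_\cT}(V_G)|\leq 1$, and then compare closures in $M_\cT\ba a$ and $M_{\cT_a}$ via Lemma~\ref{freer} together with the ``moreover'' clause of Lemma~\ref{lines} to exclude $c_1\in\cl_{M_{\cT_a}}(V_G)$. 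This yields $M_{\cT_a}|U\cong U_{k-1,|U|}$ and so the desired equality of breadths.
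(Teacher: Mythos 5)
The first half of your proposal (titanic via Lemma~\ref{3-tit}, the dual of Lemma~\ref{tit-tangle} to get $\cT_a$, and Lemma~\ref{breadth-down} for $\breadth(\cT_a)\leq\breadth(\cT)$) is exactly the paper's argument, and your reduction of the reverse inequality to showing that $M_{\cT_a}|U$ is uniform for a suitably chosen witness $U$ is also the right strategy. But there are two genuine gaps in how you execute it. First, you arrange $U\cap F\subseteq \inter_M(F)$, whereas what the final step requires is $U\cap F\subseteq \inter_{M\ba a}(F-\{a\})$. The containment you correctly prove, $\inter_{M\ba a}(F-\{a\})\subseteq \inter_M(F)$, goes the wrong way for this: an element of $\inter_M(F)$ may lie in $\cl^*_{M\ba a}(G)=\cl^*_M(G\cup\{a\})$ and hence fail to be in the interior computed in $M\ba a$. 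The fix is to use hypothesis (iv) together with Lemma~\ref{2int} to get two elements $b,c\in\inter_{M\ba a}(F-\{a\})$ and swap precisely these into $U$ via Lemma~\ref{free-breadth}(ii); this is what the paper does.

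Second, your mechanism for the delicate subcase ($|V_G|=k-2$ and possibly $c_1\in\cl_{M_{\cT_a}}(V_G)$) does not close. Lemma~\ref{lines} applied to $M_\cT$ only yields information about closures in $M$, not in $M\ba a$ or $M_{\cT_a}$; and Lemma~\ref{freer} says $M_\cT\ba a$ is freer than $M_{\cT_a}$, which is entirely consistent with $V_G\cup\{c_1\}$ being independent in $M_\cT$ yet dependent in $M_{\cT_a}$ — that is exactly the situation you must exclude, so freeness alone cannot exclude it. If you instead tried Lemma~\ref{lines} inside $M\ba a$ with the flats $\cl_{M_{\cT_a}}(V_G)$ and $F-\{a\}$, you would need $\cl_{M_{\cT_a}}(V_G)$ to be solid, which you have not verified and which need not hold. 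The tool that actually finishes the argument is Lemma~\ref{int-free} applied to $M\ba a$ and $\cT_a$: since $c_1\in\inter_{M\ba a}(F-\{a\})$, the element $c_1$ is freely placed on the rank-$2$ flat $F-\{a\}$ of $M_{\cT_a}$, so any circuit $C$ of $M_{\cT_a}$ through $c_1$ has $F-\{a\}\subseteq\cl_{M_{\cT_a}}(C)$; but Lemma~\ref{static}(i) then forces $C$ to be dependent in $M_\cT\ba a$ as well, contradicting $C\subseteq U$. With that substitution (and the corrected choice of $U\cap F$), your case analysis becomes unnecessary: a single minimal circuit argument handles all cases at once.
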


\begin{proof}
By assumption $F$ is 3-separating in $M$. It is exactly 3-separating because $M$ is 3-connected, $|F|\geq 5$ and 
$F$ is $\cT$-small, so that $|E(M)-F|\geq 2$.  By Lemma~\ref{3-tit},  
$F-\{a\}$ is titanic in $M\ba a$.
It now follows from Lemma~\ref{tit-tangle} that $\cT$ generates a tangle $\cT_a$ in
$M\ba a$.

We have $\inter_{M\ba a}(F-\{a\})$ is nonempty, so, by Lemma~\ref{2int}, 
there are  distinct elements
$b$ and $c$ in $\inter_{M\ba a}(F-\{a\})$. Hence 
$b,c\in\inter_M(F)$. 

\begin{sublemma}
\label{keep-int-1}
There is a witness $U$ for $\breadth(\cT)$ with the property that
$U\cap F\subseteq \{b,c\}$. 
\end{sublemma}

\begin{proof} Let $U$ be a witness for $\breadth(\cT)$.
By Lemma~\ref{int-free}, $b$ and $c$ are freely placed on $F$ in $M_{\cT}$. 
By Lemma~\ref{free-breadth}(i), $|F\cap U|\leq 2$. 
 Then, by Lemma~\ref{free-breadth}(ii), we may assume that $a \not\in U$ and $b \in U$. 
The assertion holds if $|U \cap F| = 1$, or if $U \cap F = \{b,c\}$ so we may assume that $U \cap F = \{b,d\}$ for $d \neq c$. Then, by Lemma~\ref{free-breadth}(ii) again, 
$(U - \{d\}) \cup \{c\}$ is a witness for $\breadth(\cT)$ that contains $\{b,c\}$. Thus the assertion holds.
\end{proof}

By Lemma~\ref{breadth-down}, $\breadth(\cT)\geq \breadth(\cT_a)$. Suppose  the lemma fails. Then 
$M_{\cT_a}|U\neq M_\cT|U$. Since $M_\cT\ba a$ is freer than $M_{\cT_a}$, 
we deduce that   that $M_{\cT}|U$ is freer than $M_{\cT_a}|U$. Hence there is a
circuit $C$ of $M_{\cT_a}$ such that $C\subseteq U$ and $C$ is independent in 
$M_{\cT}\ba a$. By Lemma~\ref{static}(ii), $C\cap F\neq \emptyset$. 

Let $C'$ denote the closure of $C$ in $M_{\cT_a}$. Assume
that $F-\{a\}\subseteq C'$.  Then, by Lemma~\ref{static}(i)
$r_{M_\cT\ba a}(C')=r_{M_{\cT_a}}(C')$. But this implies that $C$ is dependent in
$M_\cT$. Hence $F-\{a\}$ is not contained in $C'$.

We now know that $C$ contains exactly one element of $F-\{a\}$. Since $U \cap F \subseteq \{b,c\}$, 
we may assume that $b\in C$. Hence $b$ is not freely placed on $F-\{a\}$ in $M_{\cT_a}$.
But $b\in\inter_{M\ba a}(F-\{a\})$. This contradicts Lemma~\ref{int-free}.
\end{proof}

No doubt the next lemma is well known.

\begin{lemma}
\label{keep-3-con}
Let $M$ be a $3$-connected matroid and $F$ be a fully closed set with $\lm(F)=2$
and $|F|\geq 4$. If $x\in\guts(F)$, then $M\ba x$ is $3$-connected.
\end{lemma}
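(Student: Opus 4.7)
My plan is to argue by contradiction: suppose $(A,B)$ is a partition of $E(M\ba x)$ with $|A|,|B|\geq 2$ and $\lm_{M\ba x}(A)\leq 1$. Comparing $\lm_M$ and $\lm_{M\ba x}$ via the standard rank identity and using that $M$ is $3$-connected forces $\lm_M(A)=\lm_M(B)=2$, $\lm_{M\ba x}(A)=1$, and $x\notin\cl_M(A)\cup\cl_M(B)$, so $x\in\cl^*_M(A)\cap\cl^*_M(B)$. The hypothesis $x\in\guts(F)$ gives $x\in\cl_M(E-F)$, and Lemma~\ref{coclos} then gives $x\notin\cl^*_M(F-x)$. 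Applying Lemma~\ref{up-down} to $(F-x,x)$, if also $x\notin\cl_M(F-x)$ then $\lm_M(F-x)=\lm_M(F)-1=1$, producing a $2$-separation of $M$ with both sides of size at least two (since $|F|\geq 4$), contradicting $3$-connectivity. Hence $x\in\cl_M(F-x)$ as well.

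Because $x$ lies in $\cl_M(F-x)\cap\cl_M(E-F)$ but not in $\cl_M(A)\cup\cl_M(B)$, neither $F-x$ nor $E-F$ can be contained entirely in $A$ or $B$. Setting $P=A\cap F$, $Q=A\cap(E-F)$, $R=B\cap F$, $S=B\cap(E-F)$, all four parts are nonempty. A direct rank computation gives $\lm_{M\ba x}(F-x)=\lm_M(F)=2$, and the same closure information yields $\lm_{M\ba x}(X)=\lm_M(X)$ for each $X\in\{P,Q,R,S\}$. Applying Lemma~\ref{set-diff} in $M\ba x$ to the pairs $(A,F-x)$ and $(B,F-x)$ then gives
\[
\lm_M(Q)+\lm_M(R)\leq 3\quad\text{and}\quad \lm_M(P)+\lm_M(S)\leq 3.
\]
If $|Q|,|S|\geq 2$, then $3$-connectivity of $M$ forces $\lm_M(Q),\lm_M(S)\geq 2$, and these inequalities then force $\lm_M(P),\lm_M(R)\leq 1$, whence $|P|=|R|=1$ and $|F-x|=2$, contradicting $|F|\geq 4$. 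So, up to the symmetry swapping $A$ and $B$, we may assume $|Q|=1$, say $Q=\{q\}$.

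In this remaining case I apply circuit-cocircuit orthogonality. Since $x\in\cl^*_M(A)-A$, the element $x$ is a coloop of $M|(\{x\}\cup B)$, so there is a cocircuit $D^*$ of $M$ with $x\in D^*\subseteq\{x\}\cup A=\{x\}\cup P\cup\{q\}$. Since $x\in\cl_M(E-F)$, there is a circuit $C$ of $M$ with $x\in C\subseteq(E-F)\cup\{x\}$. Because $P\cup\{x\}\subseteq F$, we have $C\cap D^*\subseteq\{x,q\}$; and orthogonality forces $|C\cap D^*|\neq 1$, so $q\in D^*$. Thus $D^*-\{q\}\subseteq\{x\}\cup P\subseteq F$, and since $D^*$ is a circuit of $M^*$ and $F$ is coclosed, $q\in\cl^*_M(D^*-\{q\})\subseteq\cl^*_M(F)=F$, contradicting $q\in E-F$.

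I expect the main obstacle to be the case analysis on how $A$ and $B$ split $F$ and $E-F$: the submodularity bound using $\lm_{M\ba x}(F-x)=2$ (rather than the weaker bound available in $M$) is what rules out the symmetric configuration $|Q|,|S|\geq 2$, and then in the remaining case the orthogonality between a cocircuit lying almost entirely in $F$ and a circuit lying almost entirely in $E-F$ delivers the contradiction via $\cl^*_M(F)=F$.
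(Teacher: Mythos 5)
Your proof is correct, but it follows a genuinely different route from the paper's. The paper's argument is short and leans on machinery: since $x\in\guts(F)$, the partition $(G,F-\{x\})$ with $G=E(M)-F$ is a non-minimal $2$-separation of $M/x$, so Bixby's Lemma forces $x$ into a triad $T$ of $M$ if $M\ba x$ fails to be $3$-connected; orthogonality with a circuit through $x$ inside $G\cup\{x\}$, together with the coclosedness of $F$, then forces $|T\cap G|=2$, whence $x\in\cl(G)\cap\cl^*(G)$ and $\lm(F-\{x\})=1$, a contradiction. You instead take a hypothetical $2$-separation $(A,B)$ of $M\ba x$ head-on: you first pin down $\lm_M(A)=\lm_M(B)=2$ and $x\in\cl^*(A)\cap\cl^*(B)$, show $x\in\cl_M(F-\{x\})\cap\cl_M(G)$ so that all four blocks $P,Q,R,S$ are nonempty, and then split into two cases. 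The submodularity step (Lemma~\ref{set-diff} applied in $M\ba x$ against $F-\{x\}$, using $\lm_{M\ba x}(F-\{x\})=2$) correctly kills the case $|Q|,|S|\geq 2$ via $|F|\geq 4$, and in the remaining case your cocircuit $D^*\subseteq A\cup\{x\}$ plays the role that the triad plays in the paper: orthogonality with the circuit in $G\cup\{x\}$ and the coclosedness of $F$ give $q\in\cl^*(F)=F$, a contradiction. I checked the individual steps (the rank bookkeeping showing $\lm_{M\ba x}(X)=\lm_M(X)$ for $X\in\{P,Q,R,S\}$, the set-difference inequality, and the final orthogonality argument) and they all hold. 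What your approach buys is independence from Bixby's Lemma at the cost of a longer case analysis; the paper's approach buys brevity by outsourcing the structural work to Bixby. Your closing orthogonality-plus-coclosure step is essentially the same mechanism the paper uses to dispatch the triad, so the two proofs converge at the end even though they diverge at the start.
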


\begin{proof}
Assume that the lemma fails. Let $G=E(M)-F$. Since $F$ is fully closed, $|G|\geq 3$. 
Since $x \in \guts(F)$, we see that $(G,F-\{x\})$ is a $2$-separation of $M/x$.
But $|G|,|F-\{x\}|\geq 3$, so,   
by Bixby's Lemma, $M\ba x$ is $3$-connected up to series pairs.  
Thus $x$ is in a triad of $M$. Let $T$ be such a triad.

Since $F$ is fully closed and $x\in\guts(F)$, we have that $x \in F \cap \cl(G)$. 
Thus, by orthogonality, $T \not \subseteq F$. Moreover, as $F$ is fully closed, $|T \cap G| \neq 1$. 
We deduce that $|T \cap G| = 2$, so $x \in \cl^*(G)$. As $x \in \cl(G)$, Lemma~\ref{up-down}(i) implies that 
$\lm(G \cup \{x\}) < \lm(G)$, contradicting the fact that $M$ is $3$-connected.
\end{proof}

\begin{lemma}
\label{guts-away}
Let $\cT$ be a tangle of order $k\geq 4$ in a $3$-connected matroid $M$ and let
$F$ be a maximal $\cT$-small $3$-separating set of $M$. Assume that 
$|F|\geq 5$ and that $|\guts(F)|\geq 3$. Then the following hold.
\begin{itemize}
\item[(i)] If $x\in \guts(F)$, then $\cT$ generates a tangle $\cT_x$ of order $k$ in $M\ba x$.
\item[(ii)] If $x\in \guts(F)$, then $M_{\cT_x}=M_{\cT}\ba x$.
\item[(iii)] In $\guts(F)$, there is an element $a$ such that $\breadth(\cT_a)=\breadth(\cT)$ where $\cT_a$
is the tangle of order $k$ in $M\ba a$ generated by $\cT$.
\end{itemize}
\end{lemma}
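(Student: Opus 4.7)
For (i), note that since $F$ is a maximal $\cT$-small $3$-separating set with $\lm_M(F)=2$, it is a rank-$2$ flat of $M_\cT$. The plan is to apply (the dual of) Lemma~\ref{tit-tangle} with $t=2\leq k-2$. For $x\in\guts(F)$ I must verify that $\lm_{M\ba x}(F-\{x\})=2$ and that $F-\{x\}$ is titanic in $M\ba x$. The first holds because $x\in\cl_M(E-F)$ means $x$ is not a coloop (so $r(M\ba x)=r(M)$), and applying Lemma~\ref{up-down} to the partition $(F-\{x\},\{x\},E-F)$, together with the $3$-connectedness of $M\ba x$ from Lemma~\ref{keep-3-con}, rules out any drop in connectivity below $2$. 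The second follows from Lemma~\ref{3-tit} since $|F-\{x\}|\geq 4$ and this set is exactly $3$-separating in the $3$-connected matroid $M\ba x$.

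For (ii), by Lemma~\ref{freer} the matroid $M_\cT\ba x$ is freer than $M_{\cT_x}$ and both have rank $k-1$, so it suffices to show $r_{M_\cT\ba x}(X)=r_{M_{\cT_x}}(X)$ for all $X\subseteq E-\{x\}$. Lemma~\ref{static} handles the cases $X\subseteq E-F$ and $F-\{x\}\subseteq X$, so assume $X_F:=X\cap(F-\{x\})$ is a proper nonempty subset of $F-\{x\}$, and set $Y=X\cup(F-\{x\})$. I first observe that $F-\{x\}$ has rank $2$ in both tangle matroids: since $M\ba x$ is $3$-connected, no $\cT_x$-small set of $\lm_{M\ba x}\leq 1$ can contain a $2$-element subset of $F-\{x\}$, so Lemma~\ref{tangle-matroid-properties}(iii) keeps two-element subsets of $F-\{x\}$ independent in $M_{\cT_x}$. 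Then Lemma~\ref{static}(i) gives $r_{M_\cT\ba x}(Y)=r_{M_{\cT_x}}(Y)$, and the analysis splits on whether $\cl_{M_\cT}(X)$ contains $F$. If it does, the freer relation forces $\cl_{M_{\cT_x}}(X)\supseteq F-\{x\}$, so $r(X)=r(Y)$ in both matroids and we are done. Otherwise, since $F$ is a rank-$2$ flat of the simple matroid $M_\cT$, we must have $\cl_{M_\cT}(X)\cap F=X_F=\{y\}$, whence $r_{M_\cT\ba x}(Y)=r_{M_\cT\ba x}(X)+1$; here the crucial claim is that $y\notin\cl_{M_{\cT_x}}(X_G)$ where $X_G=X\cap(E-F)$. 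If instead $y\in\cl_{M_{\cT_x}}(X_G)$, then by Lemma~\ref{tangle-matroid-properties}(i) there is a $\cT_x$-small set $W\supseteq X_G\cup\{y\}$ with $\lm_{M\ba x}(W)=r_{M_\cT}(X_G)\leq k-2$, and Lemma~\ref{induce-down} then promotes $W$ to a $\cT$-small set in $\cT$ of the same $\lm_M$-value, contradicting $y\notin\cl_{M_\cT}(X_G)$. Hence $y\notin\cl_{M_{\cT_x}}(X_G)$, giving $r_{M_{\cT_x}}(X)=r_{M_{\cT_x}}(X_G)+1=r_{M_\cT\ba x}(X)$.

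For (iii), take any witness $U$ for $\breadth(\cT)$. By Lemma~\ref{free-breadth}(i) we have $|U\cap F|\leq 2$, and since $|\guts(F)|\geq 3$ I can pick $a\in\guts(F)-U$. Parts (i) and (ii) give $M_{\cT_a}=M_\cT\ba a$, and because $a\notin U$ the restriction $M_{\cT_a}|U=M_\cT|U\cong U_{k-1,|U|}$ is a spanning uniform restriction of $M_{\cT_a}$. Thus $\breadth(\cT_a)\geq|U|=\breadth(\cT)$, and Lemma~\ref{breadth-down} supplies the reverse inequality. The main obstacle is the rank equality in (ii); although the case $\cl_{M_\cT}(X)\supseteq F$ falls out easily from the freer relation, the delicate step is the proper-subflat case where I must use Lemma~\ref{induce-down} to rule out that the potentially smaller closure in $M_{\cT_x}$ captures the single element $y$ of $X_F$.
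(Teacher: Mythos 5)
Parts (i) and (iii) are correct and essentially match the paper. The problem is in part (ii), at your ``crucial claim''. You assert that if $y\in\cl_{M_{\cT_x}}(X_G)$, then the $\cT_x$-small set $W\supseteq X_G\cup\{y\}$ with $\lm_{M\ba x}(W)=s$ can be ``promoted'' via Lemma~\ref{induce-down} to a $\cT$-small set with the same $\lm_M$-value. That promotion is exactly what can fail. The only candidates are $W$ and $W\cup\{x\}$, and a direct computation (using that $x$ is not a coloop) gives $\lm_M(W)=s+1$ unless $x\in\cl_M(E(M)-W-\{x\})$, and $\lm_M(W\cup\{x\})=s+1$ unless $x\in\cl_M(W)$. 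Since $W$ need not contain $G$, the fact that $x\in\cl_M(G)$ does not place $x$ in either of these closures, so both candidates may have connectivity $s+1$; then you only obtain $r_{M_\cT}(X_G\cup\{y\})\le s+1$, which is perfectly consistent with $y\notin\cl_{M_\cT}(X_G)$ (and if $s=k-2$, the candidate sets need not even lie in $\cT$). The same unjustified transfer occurs in your Case~A: ``the freer relation forces $\cl_{M_{\cT_x}}(X)\supseteq F-\{x\}$'' is valid when $|X_F|\ge 2$ (two points of $F-\{x\}$ span it in either simple rank-$2$ restriction), but a rank-preserving weak map does not in general preserve closures in that direction, so the subcase $|X_F|=1$ is not covered.

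A telling symptom is that your proof of (ii) never uses the hypothesis $|\guts(F)|\geq 3$, which is needed precisely at this point. The paper's proof takes a circuit $C$ of $M_{\cT_x}$ that is independent in $M_\cT\ba x$, shows its closure $C'$ in $M_{\cT_x}$ meets $F-\{x\}$ in a single element $c$, proves that $C'-\{c\}$ is a flat of $M_\cT\ba x$, and then deduces from $\lm_{M\ba x}(C'-\{c\})\ge\lm_{M\ba x}(C')$ that $c\in\cl_{M\ba x}(C'-\{c\})$ or $c\in\cl^*_{M\ba x}(C'-\{c\})$. The first alternative contradicts the flat property via Lemma~\ref{dennis2}; the second places $c$ in $\coguts_{M\ba x}(F-\{x\})$, and since $\guts_{M\ba x}(F-\{x\})$ retains at least two elements of $\guts(F)$, Lemma~\ref{guts-coguts} yields a contradiction. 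Some argument of this kind is required to close your gap.
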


\begin{proof}
Say $x\in\guts(F)$. By Lemma~\ref{keep-3-con}, $M\ba x$ is 3-connected. 
Since $|F|\geq 5$ and $M\ba x$ is $3$-connected, $F-\{x\}$ is titanic in $M\ba x$. Thus, by
Lemma~\ref{tit-tangle}, $\cT$ generates a tangle $\cT_x$ of order $k$ in $M\ba x$. Thus (i) holds.

Now $M\ba x$ is $3$-connected, so, by Lemma~\ref{3-con} $M_{\cT_x}$ is 3-connected. 
From this   and the fact that $F-\{x\}$ has rank 2 in both $M_{\cT}\ba x$ and $M_{\cT_x}$, 
we deduce that 

\begin{sublemma}
\label{guts-away-0} 
$M_{\cT}|(F-\{x\})=M_{\cT_x}|(F-\{x\})$. 
\end{sublemma}

Assume that $M_{\cT}\ba x\neq M_{\cT_x}$. By Lemma~\ref{freer},
$M_{\cT}\ba x$ is freer than $M_{\cT_x}$, so there is a circuit $C$ of $M_{\cT_x}$ that is
independent in $M_\cT\ba x$. By (\ref{guts-away-0}) and
Lemma~\ref{static}(ii), $C$ meets both $E(M)-F$ and $F-\{x\}$. 
Let $C'=\cl_{M_{\cT_x}}(C)$. If $F-\{x\}\subseteq C'$, then, by Lemma~\ref{static}(i),
$C'$ has the same rank in both $M_\cT\ba x$ and $M_{\cT_x}$. Hence $F-\{x\}\not \subseteq C'$. 
As $r_{M_\cT}(F - \{x\}) = 2$, we see that $|C' \cap (F - \{x\})| < 2$. But 
$|C \cap (F - \{x\})| \ge 1$, so there is a unique element $c$ in $C' \cap (F - \{x\})$. Thus 
$C - \{c\} \subseteq C' - \{c\} \subseteq G$. Hence, by Lemma~\ref{static}(ii), 

\begin{sublemma}
\label{guts-away-1} 
$r_{M_{\cT}\ba x}(C' - \{c\}) = r_{M_{\cT_x}}(C' - \{c\}) = r_{M_{\cT_x}}(C- \{c\}) = r_{M_{\cT_x}}(C).$
\end{sublemma}

\begin{sublemma}
\label{guts-away-2}
$C'-\{c\}$ is a flat of $M_{\cT}\ba x$.
\end{sublemma}

\begin{proof}
Assume that this fails. Then there is an element $d$ not in $C' - \{c\}$ such that 
$r_{M_\cT\ba x}((C' - \{c\}) \cup \{d\}) =  r_{M_\cT\ba x}(C' - \{c\})$. As $M_{\cT}\ba x$ is freer than $M_{\cT_x}$, we see by (\ref{guts-away-1}) that 
$r_{M_{\cT_x}}((C' - \{c\}) \cup \{d\}) =  r_{M_{\cT_x}}(C' - \{c\})$, so $d = c$. Thus, by  (\ref{guts-away-1}) again, $r_{M_\cT\ba x}(C') = r_{M_{\cT_x}}(C).$
Thus $r_{M_{\cT_x}}(C) = r_{M_{\cT}\ba x}(C') \ge r_{M_{\cT}\ba x}(C) \ge r_{M_{\cT_x}}(C).$ Hence equality holds throughout and we have a contradiction as $C$ 
is a circuit in $M_{\cT_x}$ but an independent set in $M_\cT\ba x$.
\end{proof}

Let $r_{M_{\cT_x}}(C') = t$.  By Lemma~\ref{basic-stuff}, $C'$ is a maximal $\cT_x$-small
$(t+1)$-separating set in $M\ba x$. Since $c\in\cl_{M_{\cT_x}}(C'-\{c\})$, it follows that 
$\lm_{M\ba x}(C'-\{c\})\geq \lm_{M\ba x}(C')$. Hence
 $c\in\cl_{M\ba x}(C'-\{c\})$ or $c\in\cl^*_{M\ba x}(C'-\{c\})$.
In the former case, $c\in\cl_{M_\cT}(C'-\{c\})$ by Lemma~\ref{dennis2}, 
contradicting the fact that $C'-\{c\}$ is a flat of $M_\cT\ba x$.

We now know that $c\in\cl^*_{M\ba x}(C'-\{c\})$. Then, as $C' - \{c\} \subseteq E(M) - F$, we deduce that  $c\in\cl^*_{M\ba x}(E(M)-F)$.
This implies that $c\in\coguts_{M\ba x}(F-\{x\})$. We are now in a position to obtain a
contradiction to Lemma~\ref{guts-coguts}. Since $F$ is fully closed in $M$, it follows 
from Lemma~\ref{keep-fcl} that $F- \{x\}$ is fully closed in $M\ba x$.
As $|\guts(F)| \ge 3$, there are at least 
two elements in $\guts_{M\ba x}(F-\{x\})$. As $M\ba x$   is $3$-connected, we obtain a 
contradiction to Lemma~\ref{guts-coguts}.
Hence (ii) holds.

Let $U$ be a witness for $\breadth(\cT)$. As $\cT$ has order $k\geq 4$,
the matroid $M_{\cT}$ has rank at least three, so   $U$ has rank at least three. Such a 
uniform matroid cannot contain a triangle, so $|F\cap U|\leq 2$. Let
$a$ be an element of $\guts_M(F)-U$. By (ii), $(M_\cT\ba a)|U=M_{\cT_a}|U$.
Hence $U$ is also a witness for $\breadth(\cT_a)$. We conclude that
$\breadth(\cT)=\breadth(\cT_a)$, as required.
\end{proof}

\section{Proof of the Main Theorem}

Recall that a matroid $M$ is {\em weakly $4$-connected} if
$M$ is 3-connected and a subset  $A$ of $E(M)$ has $\lm(A)=2$ 
only if $|A|\leq 4$ or $|E(M)-A|\leq 4$. The purpose of this section is to establish the following theorem.

\begin{theorem}
\label{breadth-crit}
Let $\cT$ be a tangle of order $k\geq 4$ in a matroid $M$. Then 
\begin{itemize}
\item[(i)] $M$ is weakly $4$-connected; or 
\item[(ii)] $M$ has an element $a$ such that, for some $N$ in $\{M\ba a, M/a\}$, 
the tangle $\cT$ generates an order-$k$ tangle $\cT'$ in $N$ with $\breadth(\cT') = \breadth(\cT).$ 
\end{itemize}
\end{theorem}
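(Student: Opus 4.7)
The plan is a case analysis based on the structure of $M$. First, if $M$ is not $3$-connected, Corollary \ref{2-tit-3} directly produces an element $a$ and a minor $N \in \{M \ba a, M/a\}$ in which $\cT$ generates a $k$-tangle of the same breadth, so conclusion (ii) holds. Hence assume $M$ is $3$-connected. If $M$ is weakly $4$-connected, conclusion (i) holds, so suppose there is a partition $(X, Y)$ of $E(M)$ with $\lm_M(X) = 2$ and $|X|, |Y| \geq 5$. Since $\lm_M(X) = 2 < k-1$, exactly one of $X, Y$ is $\cT$-small; relabel so $X \in \cT$. Let $F$ be a maximal $\cT$-small $3$-separating subset of $M$ containing $X$. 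Using the $3$-connectedness of $M$, the fact that $|E(M)| \geq 10$, and (T4), one deduces $\lm_M(F) = 2$; so $F$ is a rank-$2$ flat of $M_\cT$ and, by Corollary \ref{full-closed}, is fully closed in $M$ with $|F| \geq 5$.

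Next I apply Lemma \ref{interior-decoration} to $F$. Since $|F| \geq 5$, the $4$-element fan case is excluded, leaving cases (i), (ii), or (iv). In case (i), $F$ is a line of $M$, so $\guts(F) = F$ has at least $5$ elements, and Lemma \ref{guts-away}(iii) directly produces an element $a \in \guts(F)$ for which $\cT$ generates a $k$-tangle in $M \ba a$ of equal breadth. Case (ii), in which $F$ is a coline, is handled dually: applying case (i) inside $M^*$ (where $\cT$ is also a tangle, since tangles are duality-invariant) produces $a \in \coguts(F)$ for which $\cT$ generates a $k$-tangle in $M/a = (M^* \ba a)^*$ of equal breadth.

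The central case (iv), where $|\inter(F)| \geq 3$, is the main obstacle. Because $M$ is $3$-connected with $|E(M)| \geq 10$, it is both simple and cosimple, so Bixby's Lemma ensures that for each $a \in E(M)$, at least one of $M \ba a$ and $M/a$ is $3$-connected. For any $a \in \inter(F)$, the condition $a \notin \cl^*_M(E(M) - F)$ gives $\lm_{M \ba a}(F - \{a\}) = \lm_M(F) = 2$, which is hypothesis (ii) of Lemma \ref{keep-int}; dually, $a \notin \cl_M(E(M) - F)$ yields the corresponding hypothesis for contraction. Hypothesis (iii) is immediate from $|F| \geq 5$. The delicate point is hypothesis (iv): that some $b \in F - \{a\}$ remains in $\inter_{M \ba a}(F - \{a\})$ or $\inter_{M/a}(F - \{a\})$, which amounts to tracking how $\cl$ and $\cl^*$ of $E(M) - F$ are perturbed when $a$ is removed. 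The aim is to show that, with $|\inter(F)| \geq 3$ available candidates for $a$, one can always choose $a$ so that both the relevant $3$-connectedness of the single-element minor and the persistence of an interior element in $F - \{a\}$ hold simultaneously. Verifying this persistence, by a careful comparison of closures and coclosures in $M$ and $M \ba a$ (or $M/a$), is where the bulk of the work lies. Once such an $a$ is fixed, Lemma \ref{keep-int} (or its dual) delivers conclusion (ii).
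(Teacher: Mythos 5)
Your reduction to the case of a $3$-connected $M$ with a fully closed maximal $\cT$-small $3$-separating set $F$ satisfying $|F|\geq 5$, and your treatment of the line and coline cases via Lemma~\ref{guts-away} and duality, match the paper's proof. The gap is in the central case: you write that verifying the persistence of an interior element ``is where the bulk of the work lies'' and then stop, but that verification is precisely the hard content of the theorem. The paper isolates it as Lemma~\ref{interior}: if $F$ is fully closed, exactly $3$-separating, $|F|\geq 5$, $r(F)>2$ and $r^*(F)>2$, then some $a\in F$ admits a $3$-connected $N\in\{M\ba a,M/a\}$ with $|\inter_N(F-\{a\})|\geq 2$. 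Its proof is not a routine closure-tracking argument: it splits on whether $r(F)=3$ (where one deletes an element of $F\cap\cl(G)$, or, if $F$ is a cocircuit, invokes Lemos's theorem that some triad meets a circuit contained in $F$) or $r(F),r^*(F)>3$ (where the existence of a non-essential element of $F$ is supplied by Oxley's theorem on essential elements \cite{O03}). None of this is recoverable from the sketch you give.

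Moreover, the route you propose into this case contains an error that would derail it: Bixby's Lemma does \emph{not} ensure that for each $a$ at least one of $M\ba a$ and $M/a$ is $3$-connected; it only ensures that $\co(M\ba a)$ or $\si(M/a)$ is $3$-connected, and wheels show that every element of a $3$-connected matroid can be essential. So you cannot assume that some $a\in\inter(F)$ has a $3$-connected single-element minor, and restricting the search to $\inter(F)$ is itself a wrong turn: the paper does not choose $a$ from the interior (in the $r(F)=3$ case it takes $a\in\guts(F)$), and the nonemptiness of $\inter_N(F-\{a\})$ is obtained by applying Lemma~\ref{interior-decoration} inside the minor $N$, not by showing that interior elements of $F$ persist when $a$ is removed.
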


The following is an immediate consequence of this theorem. 

\begin{corollary}
\label{breadth-crit-2}
Let $\cT$ be a tangle of order $k\geq 4$ in a matroid $M$. If $\cT$ is
breadth-critical, then $M$ is weakly $4$-connected.
\end{corollary}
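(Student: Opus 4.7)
The plan is a case analysis: assuming $M$ is not weakly 4-connected, produce an element $a$ and a minor $N\in\{M\ba a,M/a\}$ in which $\cT$ generates a $k$-tangle of the same breadth.

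First, if $M$ is not 3-connected, Corollary~\ref{2-tit-3} immediately supplies the required $a$ and $N$. So I would assume $M$ is 3-connected but not weakly 4-connected, giving a partition $(X,Y)$ of $E(M)$ with $\lm_M(X)=2$ and $|X|,|Y|\ge 5$. Since $k\ge 4$ forces $\lm_M(X)\le k-2$, axiom (T2) places exactly one of $X,Y$ in $\cT$; say $X\in\cT$. Let $F$ be a maximal $\cT$-small 3-separating set containing $X$. Then $|F|\ge 5$, $|E(M)-F|\ge 2$, $F$ is a rank-2 flat of $M_\cT$ by Lemma~\ref{basic-stuff}(ii), and $F$ is fully closed in $M$ by Corollary~\ref{full-closed}. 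Apply Lemma~\ref{interior-decoration}: since $|F|\ge 5$, case (iii) of that lemma is excluded, leaving (i), (ii), or (iv).

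In case (i), $F$ is a line of $M$ with $\guts_M(F)=F$, so $|\guts_M(F)|\ge 5$, and Lemma~\ref{guts-away}(iii) provides $a\in\guts_M(F)$ with $\breadth(\cT_a)=\breadth(\cT)$ in $N=M\ba a$. Case (ii) is dual: since $\cT$ is a tangle in $M^*$ with $\guts_{M^*}(F)=\coguts_M(F)=F$, applying Lemma~\ref{guts-away}(iii) in $M^*$ yields the required $a$ in $M^*\ba a=(M/a)^*$, so (ii) of the theorem holds with $N=M/a$. The substantive case is (iv), where $|\inter_M(F)|\ge 3$, and I would target Lemma~\ref{keep-int} or its contraction-dual. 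For any $a\in\inter_M(F)$, combining Lemma~\ref{coclos} with the definition of interior gives $a\in\cl_M(F-\{a\})\cap\cl^*_M(F-\{a\})$; then Lemma~\ref{up-down}(i) gives $\lm_M(F-\{a\})=3$, and Lemma~\ref{tikokino} (with its dual) gives $\lm_{M\ba a}(F-\{a\})=\lm_{M/a}(F-\{a\})=2=\lm_M(F)$, verifying hypothesis (ii) of Lemma~\ref{keep-int} and of its dual; hypothesis (iii) is immediate.

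The main obstacle is case (iv), where I must secure the remaining two hypotheses of Lemma~\ref{keep-int} (or its dual): 3-connectedness of $M\ba a$ (respectively $M/a$), and non-emptiness of $\inter_{M\ba a}(F-\{a\})$ (respectively $\inter_{M/a}(F-\{a\})$). Since $M$ is 3-connected on at least seven elements, it has no 2-element circuits or cocircuits, so Bixby's Lemma ensures that for every element $a$, at least one of $M\ba a$ and $M/a$ is 3-connected. By Lemma~\ref{keep-fcl}, $F-\{a\}$ is fully closed in both $M\ba a$ and $M/a$, and a short calculation via Lemma~\ref{coclos} reduces the interior condition to finding $b\in\inter_M(F)-\{a\}$ with $b\in\cl_M(F-\{a,b\})$ (for deletion) or $b\in\cl^*_M(F-\{a,b\})$ (for contraction). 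Given $|\inter_M(F)|\ge 3$, I would pick three distinct elements of $\inter_M(F)$ and, using submodularity (Lemma~\ref{set-diff}) on the ranks and coranks of the three two-element removals from $F$, show that some pair realises one of the required closure conditions on the side permitted by Bixby's Lemma, thereby producing the desired element and completing case (iv).
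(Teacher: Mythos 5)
Your overall architecture matches the paper's proof of Theorem~\ref{breadth-crit} (of which the corollary is an immediate consequence): reduce to the $3$-connected case via Corollary~\ref{2-tit-3}, pass to the fully closed rank-$2$ flat $F$ of $M_\cT$ containing $X$, dispose of the cases where $F$ is a line of $M$ or of $M^*$ via Lemma~\ref{guts-away} and duality, and aim Lemma~\ref{keep-int} (or its dual) at the remaining case. Your verification of hypothesis (ii) of Lemma~\ref{keep-int} for $a\in\inter_M(F)$ is correct. The problem is in how you propose to secure hypotheses (i) and (iv) in the remaining case, which is exactly the content of the paper's Lemma~\ref{interior}.

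The concrete gap: your claim that ``Bixby's Lemma ensures that for every element $a$, at least one of $M\ba a$ and $M/a$ is $3$-connected'' is false. Bixby's Lemma only gives that $\co(M\ba a)$ or $\si(M/a)$ is $3$-connected; an element lying in both a triangle and a triad (a fan element, as in a wheel) has neither $M\ba a$ nor $M/a$ $3$-connected, and the absence of $2$-element circuits and cocircuits does not rule this out. In particular, $F$ may contain a long fan, in which case every element of $\inter_M(F)$ could be essential and your choice of $a$ from the interior fails outright. The paper's Lemma~\ref{interior} is precisely the machinery needed here, and its proof is not a routine submodularity computation: when $r(F)=3$ and $F$ is a cocircuit it invokes Lemos' theorem to find a deletable element, and when $r(F)>3$ and $r^*(F)>3$ it invokes Oxley's theorem on essential elements in a fully closed exactly $3$-separating set to guarantee some $x\in F$ with $M\ba x$ or $M/x$ $3$-connected (and note that the element produced there need not lie in $\inter_M(F)$; hypothesis (ii) of Lemma~\ref{keep-int} is then checked differently). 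Your sketch of ``three two-element removals plus submodularity'' does not substitute for these external results, so case (iv) of your case analysis is not established.
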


We begin with some lemmas. The next result 
follows from Bixby's Lemma \cite{bixby} (see \cite[Lemma~8.7.3]{O11}).

\begin{lemma}
\label{bixby}
Let $e$ be an element of the $3$-connected matroid $M$. If $(A,B)$ is a $2$-separation of 
$M/e$ with $|A|,|B|\geq 3$, and $M\ba e$ is not $3$-connected, then $e$ is in a triad of $M$.
\end{lemma}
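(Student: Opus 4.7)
The plan is to apply Bixby's Lemma \cite[Lemma~8.7.3]{O11}: for each element $e$ of a $3$-connected matroid $M$ with at least four elements, either $\co(M\ba e)$ or $\si(M/e)$ is $3$-connected. Since $|A|,|B|\geq 3$, we have $|E(M)|\geq 7$, so $M$ is simple and cosimple, and consequently $M\ba e$ is simple.

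The first step is to rule out that $\si(M/e)$ is $3$-connected. If it were, then after adjusting $(A,B)$ within parallel classes of $M/e$ (which preserves $\lm_{M/e}$), one side, say $A$, would have to lie entirely in a single parallel class $P$ of $M/e$ with $|P|\geq 3$. Translating back to $M$, the set $L=A\cup\{e\}$ is then a rank-$2$ flat of $M$ with $|L|\geq 4$, so any two elements of $A$ span $L$ and hence span $e$. Now if $M\ba e$ admitted a $2$-separation $(X,Y)$ with $|X|,|Y|\geq 2$, then $|X\cap A|\geq 2$ would force $e\in\cl_M(X)$, and this would yield $\lm_M(Y)=\lm_{M\ba e}(Y)\leq 1$, contradicting $3$-connectivity of $M$; by symmetry $|Y\cap A|\leq 1$ as well. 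But then $|A|=|X\cap A|+|Y\cap A|\leq 2$, contradicting $|A|\geq 3$. Hence $M\ba e$ would be $3$-connected, contrary to hypothesis.

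Consequently, $\co(M\ba e)$ is $3$-connected. Since $M\ba e$ itself is not $3$-connected yet cosimplifies to a $3$-connected matroid, it must contain a $2$-cocircuit $\{f,g\}$. Now $\{f,g\}$ being a cocircuit of $M\ba e$ means that either $\{f,g\}$ or $\{e,f,g\}$ is a cocircuit of $M$. The former is excluded because $M$ is cosimple, so $\{e,f,g\}$ is a triad of $M$ containing $e$, as required. The main obstacle is the long-line argument in the second paragraph; once $\si(M/e)$ being $3$-connected is excluded, the remaining implications are a routine application of the standard relationship between cocircuits of $M$ and of $M\ba e$ together with the cosimplicity of $M$.
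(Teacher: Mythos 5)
Your proof is correct in substance, but you have taken a longer route than the one the paper intends, by invoking Bixby's Lemma only in its weaker ``either $\co(M\ba e)$ or $\si(M/e)$ is $3$-connected'' form. The version in \cite[Lemma~8.7.3]{O11} says more: either $M\ba e$ or $M/e$ has no non-minimal $2$-separations (non-minimal meaning both sides have at least three elements), with the corresponding cosimplification or simplification being $3$-connected. Since $(A,B)$ is a non-minimal $2$-separation of $M/e$, this immediately gives that every $2$-separation of $M\ba e$ has a side $\{f,g\}$ of size two; as $M$ is simple and cosimple, $\{f,g\}$ must be a series pair of $M\ba e$, whence $\{e,f,g\}$ is a triad of $M$. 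Your entire second paragraph is thus avoidable. That paragraph is nonetheless essentially sound, but two points need tightening. First, the claim that one side lies in a single parallel class $P$ with $|P|\geq 3$ must be established for one of the \emph{original} sides: pushing shared parallel classes onto one side can shrink the other below three elements, so the ``adjustment'' as stated does not quite deliver $|P|\geq 3$. The clean argument is that $A$ and $B$ induce a covering of $E(\si(M/e))$ by sets of ranks $r_{M/e}(A)$ and $r_{M/e}(B)$, and $3$-connectivity of $\si(M/e)$ together with submodularity forces $r_{M/e}(A)\leq 1$ or $r_{M/e}(B)\leq 1$, so the original $A$ or $B$ sits inside one parallel class. (Also, $A\cup\{e\}$ need not be a flat of $M$, only a rank-$2$ set, but that is all you use.) Second, in the final paragraph you should rule out coloops of $M\ba e$ before concluding that failure of cosimplicity yields a $2$-cocircuit; this is immediate since a coloop of $M\ba e$ would give a cocircuit of $M$ of size at most two, contradicting cosimplicity of $M$.
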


\begin{lemma}
\label{interior}
Let $F$ be a fully closed exactly $3$-separating set in a $3$-connected matroid $M$. 
Suppose that $r(F)>2$, that $r^*(F)>2$, and that $|F|\geq 5$. Then
there is an element $a\in F$ and a  $3$-connected matroid $N$ in 
$\{M\ba a,M/a\}$ such that $|\inter_N(F-\{a\})|\geq 2$.
\end{lemma}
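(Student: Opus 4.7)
The plan is to apply Lemma~\ref{interior-decoration} first. Under the hypotheses $r(F)>2$, $r^*(F)>2$, and $|F|\geq 5$, cases (i), (ii), and (iii) of that lemma are respectively ruled out, so we are in case (iv) and $|\inter_M(F)|\geq 3$. Write $I=\inter_M(F)$.

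I would then aim to pick $a\in I$ and $N\in\{M\ba a, M/a\}$ such that $N$ is 3-connected and $|\inter_N(F-\{a\})|\geq 2$. Since $a\in I$ gives $a\notin\cl_M(E(M)-F)\cup\cl^*_M(E(M)-F)$, Lemma~\ref{tikokino} and its dual yield $\lm_{M\ba a}(F-\{a\})=\lm_{M/a}(F-\{a\})=2$. A routine (co)closure calculation using Lemma~\ref{coclos} shows, for $b\in I-\{a\}$: the element $b$ leaves $\inter_{M\ba a}(F-\{a\})$ exactly when $\{a,b\}$ is a cocircuit of $M|F$ (i.e., $a$ and $b$ are coparallel in $M|F$); dually, $b$ leaves $\inter_{M/a}(F-\{a\})$ exactly when some circuit $C$ of $M$ satisfies $C\cap F=\{a,b\}$. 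Let $K_a$ be the coparallel class of $a$ in $M|F$, and let $L_a$ be the analogous parallel class for the matroid obtained from $M$ by contracting a basis of $E(M)-F$ and deleting the remainder. Then $|\inter_{M\ba a}(F-\{a\})\cap I|\geq |I-K_a|$ and $|\inter_{M/a}(F-\{a\})\cap I|\geq |I-L_a|$.

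Coparallel classes of $M|F$ partition its non-coloops, which are precisely $I\cup\guts(F)$ (the coloops of $M|F$ being $\coguts(F)$). These induce a partition on $I$. If this partition has at least two parts, I pick $a\in I$ from a smallest part: for $|I|=3$ the split is $1+2$ or $1+1+1$, so $|K_a\cap I|\leq 1$ and hence $|I-K_a|\geq 2$; for $|I|\geq 4$, $|K_a\cap I|\leq\lfloor|I|/2\rfloor\leq |I|-2$. A dual argument handles $M/a$ when the parallel-class partition of $I$ has at least two parts. Once $a$ is chosen, I verify that the chosen $N$ is 3-connected using Bixby's Lemma (Lemma~\ref{bixby}): since $a\in\inter_M(F)$, any triangle through $a$ has at least two elements in $F$ (otherwise $a\in\cl_M(E(M)-F)$), and similarly for triads, restricting the ways 3-connectedness can fail and allowing $a$ to be chosen so that $M\ba a$ or $M/a$ itself is 3-connected.

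The main obstacle is the extremal case in which $I$ lies in a single coparallel class of $M|F$ \emph{and} in a single parallel class of the contraction of $M$ to $F$. In that case, for every pair $a,b\in I$ there exist both a cocircuit $C^*_{ab}$ and a circuit $C_{ab}$ of $M$ with $C^*_{ab}\cap F=C_{ab}\cap F=\{a,b\}$. I would derive a contradiction by applying orthogonality to $C_{ij}$ and $C^*_{ik}$ for distinct $i,j,k$ in an index set of three interior elements: their $F$-intersections meet only in $\{a_i\}$, so they must share an element of $E(M)-F$; iterating this over all cross-pairs builds enough structure inside $E(M)-F$ that, combined with the rank conditions $r(F)>2$ and $r^*(F)>2$, the hypothesis $|F|\geq 5$, and the full closure of $F$, it contradicts the 3-connectedness of $M$. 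Alternatively, when $\guts(F)$ or $\coguts(F)$ is nonempty, the case can be disposed of by choosing $a$ from $\guts(F)$ (or $\coguts(F)$) and appealing to Lemma~\ref{keep-3-con} and the pattern of Lemma~\ref{guts-away} (or their duals).
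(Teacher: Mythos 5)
Your opening move is sound: under the stated hypotheses, outcomes (i)--(iii) of Lemma~\ref{interior-decoration} are indeed excluded, so $|\inter_M(F)|\geq 3$. But your overall strategy diverges from the paper's, and it leaves the hardest part of the lemma unproved. The paper never tracks which interior elements of $F$ survive the removal of $a$; instead it first secures an element $a\in F$ for which some $N\in\{M\ba a, M/a\}$ is $3$-connected, notes that $F-\{a\}$ remains fully closed (Lemma~\ref{keep-fcl}) and exactly $3$-separating in $N$ with $|F-\{a\}|\geq 4$ and is neither a line of $N$ nor of $N^*$, and then applies Lemma~\ref{interior-decoration} \emph{to $N$} to get outcome (iii) or (iv) there. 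That re-application makes all of your series-class/parallel-class bookkeeping unnecessary, and it also frees the choice of $a$: in the case $r(F)=3$ the paper deliberately takes $a\in F\cap\cl_M(G)$, i.e.\ a guts element \emph{not} in $\inter_M(F)$, whereas your scheme insists on $a\in\inter_M(F)$.

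The genuine gap is the existence of the non-essential element. You write that Bixby's Lemma, together with the observation that triangles and triads through an interior element are mostly confined to $F$, will "allow $a$ to be chosen so that $M\ba a$ or $M/a$ itself is $3$-connected." Bixby's Lemma does not deliver this: it only says that $\co(M\ba a)$ or $\si(M/a)$ is $3$-connected, and there exist fully closed exactly $3$-separating sets every one of whose elements is essential. Ruling these out is precisely where the paper has to work: for $r(F)=3$ it shows that a guts element can be deleted keeping $3$-connectivity, and when $F$ is a cocircuit it invokes Lemos's theorem \cite{L89} to produce a triad meeting a circuit of $F$ and derive a contradiction; for $r(F),r^*(F)\geq 4$ it appeals to Oxley's structure theorem for $3$-separating sets of essential elements \cite[Theorem 1.1]{O03}. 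None of this machinery appears in your proposal. Moreover, even granting a non-essential element, your argument needs $a$ to lie simultaneously in $\inter_M(F)$, in a small series (resp.\ parallel) class, \emph{and} to be deletable (resp.\ contractible) with the matching operation -- three constraints you never reconcile. Finally, your extremal case (all of $I$ in one series class of $M|F$ and one parallel class of $M.F$) ends with "builds enough structure \dots that it contradicts the $3$-connectedness of $M$," which is an assertion, not a proof; the fallback via $\guts(F)$ or $\coguts(F)$ does not cover the situation where both are empty. As it stands the proposal is a plausible outline with the central difficulty unresolved.
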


\begin{proof}
We first consider the case that $r(F)=3$. Let $G=E(M)-F$.

\begin{sublemma}
\label{interior-0}
If $r(F)=3$, then $r_M(G)=r(M)-1$ and $r^*_M(F)\geq 4$.
\end{sublemma}

\begin{proof}
We have $\lm_M(F)=2$ and $F$ is exactly 3-separating, so that
$r(G)=r(M )+2-r_M(F)=r(M)-1$.

We have $r^*_M(F)=r_M(G)+|F|-r(M)$ (see for example \cite[Proposition~2.19]{O11}). Since
$|F|\geq 5$, we have $r^*_M(F)\geq r(M)-1+5-r(M)=4$ as required.
\end{proof}

\begin{sublemma}
\label{interior-1}
If $r(F)=3$, and $F$ contains an element $a$ such that $M\ba a$ is $3$-connected,
then $|\inter_{M\ba a}(F-\{a\})|\geq 2$.
\end{sublemma}

\begin{proof}
Say $r(F)=3$. Suppose that $a\in F$ and $M\ba a$ is 3-connected. Since $M\ba a$ is 3-connected and 
$|F-\{a\}|\geq 4$, we have $\lm_{M\ba a}(F-\{a\})\geq 2$. But $\lm_{M\ba a}(F-\{a\})\leq \lm_M(F)=2$, 
so equality holds. Hence
\begin{align*}
r_{M\ba a}(F-\{a\})&=r(M\ba a)+2-r_{M\ba a}(G)\\
&=r(M)+2-(r(M)-1)=3.
\end{align*}
By (\ref{interior-0}) $r^*_{M\ba a}(F-\{a\})\geq 3$. 
This means that   outcome (iii) or outcome (iv) of Lemma~\ref{interior-decoration} holds. In both
cases, $|\inter_{M\ba a}(F-\{a\})|\geq 2$ as desired.
\end{proof}

\begin{sublemma}
\label{interior-1.1}
If $r(F)=3$, and $a\in F\cap \cl_M(G)$, then $M\ba a$ is $3$-connected so that
$|\inter_{M\ba a}(F-\{a\})|\geq 2$.
\end{sublemma}

\begin{proof}
Assume $a\in F\cap \cl_M(G)$ and $M\ba a$ is not 3-connected. Note that 
$r_M(F-\{a\})=3$, as otherwise $M$ is not 3-connected. Thus $a\in\cl_M(F-\{a\})$.
We deduce that $\lm_{M/a}(F-\{a\})=1$. But $|F-\{a\}|,|G|\geq 3$ so that,
by Lemma~\ref{bixby} $a$ is in a triad $\{a,b,c\}$. Since $a\in\cl_M(F-\{a\})$
we have $(F-\{a\})\cap\{b,c\}\neq \emptyset$. Say $b\in F-\{a\}$. Then
$c\in\cl^*(\{a,b\})$ so that $c\in\cl^*(F)$. Since $F$ is fully closed $c\in F$.
But $\{a,b,c\}\subseteq F$. Hence $a\in\cl^*(F-\{a\})$.

We have $a\in\cl(F-\{a\})$ and $a\in\cl^*(F-\{a\})$, so $\lm_M(F)<\lm_M(F-\{a\})$.
Since $a\in\cl_M(G)$, we have $\lm_M(F-\{a\})\leq \lm_M(F)=2$. This implies the 
contradiction that
$\lm_M(F)<\lm_M(F)$.

Hence $M\ba a$ is $3$-connected and $|\inter_{M\ba a}(F-\{a\})|\geq 2$.
\end{proof}

\begin{sublemma}
\label{interior-1.2}
If $F$ is a cocircuit of $M$, then there is an element $a\in F$ such that
$M\ba a$ is $3$-connected so that $|\inter_{M\ba a}(F-\{a\})|\geq 2$.
\end{sublemma}

\begin{proof}
Assume that $M\ba x$ is not 3-connected for all $x\in F$.
As $r(F) = 3$ and $|F|\geq 5$, there is a circuit $C$ contained in $F$. 
By a theorem of Lemos~\cite{L89}, 
$M$ has a triad $T^*$ meeting $C$. By orthogonality and the fact that $F$ is fully closed, 
we deduce that $T^* \subseteq F$. This is  a contradiction as 
both $T^*$ and $F$ are cocircuits, and $|F|\geq 5$.
\end{proof}

If there is no element $a\in F\cap \cl_M(G)$, then $F$ is a cocircuit of $M$. It now follows
from (\ref{interior-1.1}) and (\ref{interior-1.2}) that $|\inter_{M\ba a}(F-\{a\})|\geq 2$.

\begin{sublemma}
\label{interior-2}
The lemma holds if $r(F)> 3$ and $r^*(F)> 3$. 
\end{sublemma}

\begin{proof}
If there is an element $x$ of $F$ such that  $N \in \{M\ba x, M/x\}$ and $N$ is $3$-connected, 
then $F- \{x\}$ is not a line of 
$N$ or of $N^*$, so outcome (iii) or outcome (iv) of Lemma~\ref{interior-decoration} 
holds for $N$  and the lemma holds. 
Thus we may assume that if $x \in F$, then neither $M\ba x$ nor $M/x$
is 3-connected. As $F$ is fully closed in $M$, it is fully closed in all $N$ in $\{M\ba x, M/x\}$   
for all $x$ in $F$. 
This gives a contradiction to a result of Oxley~\cite[Theorem 1.1]{O03}. 
\end{proof}

If $F$ satisfies the hypotheses of the lemma, then we are either in the case
of (\ref{interior-1}), the dual of (\ref{interior-1}), or (\ref{interior-2}). 
\end{proof}

\begin{proof}[Proof of Theorem~\ref{breadth-crit}]
Let $\cT$ be a tangle of order $k\geq 4$ in  $M$. By Corollary~\ref{2-tit-3}, 
$M$ is  $3$-connected. 
 Assume that
$M$ is not weakly $4$-connected. Then $M$ has   a 
$3$-separation $(X,Y)$   with $|X|,|Y|\geq 5$. We may assume that $X$ is $\cT$-small.
Then $r_{M_\cT}(X)=2$. Let $F=\cl_{M_\cT}(X)$. Then, since $r(M_{\cT}) = k-1 \ge 3$, we see that 
$F\neq E(M)$. By Corollary~\ref{full-closed}, $F$ is fully closed in $M$. 

Assume that $r_M(F)=2$. As $M$ is 3-connected, $E(M)-F$ spans $F$. 
Then $|\guts_M(F)|=|F|> 3$. By Lemma~\ref{guts-away}, 
for some $a$ in $F$, the tangle $\cT$ generates a tangle $\cT_a$ of order $k$
in $M\ba a$ with $\breadth(\cT_a)=\breadth(\cT)$. Thus we may assume that $r_M(F)\ge 3$. 
By  duality, we may also assume that $r^*_M(F)\geq 3$.

Lemma~\ref{interior} now gives us that  there is an element
$a$ in $F$ and a matroid $N$ in $\{M\ba a,M/a\}$ such that $N$ is 3-connected and $|\inter_N(F-\{a\})|\geq 2$.
By Lemma~\ref{keep-int}, $\cT$ generates an order-$k$ tangle
$\cT_a$ in $N$ with $\breadth(\cT_a)=\breadth(\cT)$. 
\end{proof}

We are now in a position to prove Theorem~\ref{biggy} which we restate as a corollary of earlier
results of this section.

\begin{corollary}
\label{find-minor}
Let $\cT$ be a tangle of order $k\geq 4$ in a matroid $M$. Then $M$ has a weakly
$4$-connected minor $N$ with a tangle $\cT'$ of order $k$ such that 
$\cT'$ is generated by $\cT$ and $\breadth(\cT')=\breadth(\cT)$.
\end{corollary}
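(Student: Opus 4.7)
The plan is to deduce Corollary~\ref{find-minor} directly from Theorem~\ref{breadth-crit} by induction on $|E(M)|$, using Lemma~\ref{transitive} to splice together the single-element reductions.

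First, I would set up the induction on $|E(M)|$ with $k\ge 4$ fixed. For the base case, if $M$ itself is weakly $4$-connected, there is nothing to do: take $N=M$ and $\cT'=\cT$. (Very small matroids with a tangle of order $\ge 4$ automatically satisfy weak $4$-connectivity vacuously, so the base case presents no real issue.)

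For the inductive step, assume the result holds for all matroids with fewer elements than $M$, and assume $M$ is not weakly $4$-connected. By Theorem~\ref{breadth-crit}, there exists an element $a\in E(M)$ and a single-element minor $N_0\in\{M\ba a,M/a\}$ such that $\cT$ generates a tangle $\cT_0$ of order $k$ in $N_0$ with $\breadth(\cT_0)=\breadth(\cT)$. Since $|E(N_0)|=|E(M)|-1$, the inductive hypothesis applied to $(N_0,\cT_0)$ yields a weakly $4$-connected minor $N$ of $N_0$ together with a tangle $\cT'$ of order $k$ in $N$ such that $\cT_0$ generates $\cT'$ in $N$ and $\breadth(\cT')=\breadth(\cT_0)$.

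Then $N$ is a minor of $M$, it is weakly $4$-connected, and $\breadth(\cT')=\breadth(\cT_0)=\breadth(\cT)$. Finally, since $\cT$ generates $\cT_0$ in $N_0$ and $\cT_0$ generates $\cT'$ in $N$, Lemma~\ref{transitive} gives that $\cT$ generates $\cT'$ in $N$, completing the induction.

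There is really no serious obstacle: all the hard work is done in Theorem~\ref{breadth-crit} (which produces the breadth-preserving one-step reduction) and in Lemma~\ref{transitive} (which allows the compositions of such reductions to be recognised as a single act of generation). The corollary is just the iteration of these two facts.
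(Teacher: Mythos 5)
Your proof is correct and is essentially the same argument as the paper's: the paper phrases it as a repeated application of Theorem~\ref{breadth-crit} producing a finite sequence of single-element reductions, each preserving breadth and generating the next tangle, and then invokes Lemma~\ref{transitive} exactly as you do. Your induction on $|E(M)|$ is just a tidier packaging of that iteration.
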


\begin{proof}
If $M$ is weakly $4$-connected, let $N=M$. Otherwise, by repeated application of
Theorem~\ref{breadth-crit}, there is a, necessarily finite, sequence 
$N_1,N_2,\ldots,N_m$ of matroids and a sequence $\cT_1,\cT_2,\ldots,\cT_m$ of order-$k$ tangles  
such that all of the following hold.
\begin{itemize}
\item[(i)] $M = N_1$;  
\item[(ii)] each 
$N_i$ for $i > 1$ is a single-element deletion or a single-element contraction of $N_{i-1}$;   
\item[(iii)] $\cT_1=\cT$ and if $i>1$, then $\cT_i$ is a tangle of $N_i$ that is generated by $\cT_{i-1}$ with 
$\breadth(\cT_i)=\breadth(\cT_{i-1})$;  and  
\item[(iii)] $N_m$ is weakly $4$-connected.
\end{itemize}
By Lemma~\ref{transitive}, $\cT_m$ is generated by $\cT$ in $N_m$, so the result holds. 
\end{proof}

\section{Tangles of Order $4$}
Until now, we have presented our main results for tangles of order at {\em least} 4. If we
are focussed on a ``4-connected component'' of our matroid, then it is a tangle of order
{\em exactly\ } 4 that we are interested in.  By Corollary~\ref{find-minor}, a tangle of order 4 in a 
matroid generates a tangle of order 4 in a weakly 4-connected minor that preserves its
breadth. In what follows, we make some observations about tangles in this world.

\begin{lemma}
\label{one-tangle}
Let $M$ be a weakly $4$-connected matroid with at least thirteen elements. Then
$M$ has a unique tangle of order $4$.
\end{lemma}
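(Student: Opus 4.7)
The plan is to guess the unique tangle and verify it, then argue every 4-tangle must coincide with it. Set $\cT = \{A \subseteq E(M) : \lm(A) \leq 2 \text{ and } |A| \leq 4\}$. I claim this is the unique order-4 tangle of $M$.

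For existence, I would verify (T1)--(T4) directly. (T1) is immediate from the definition. For (T2), given a 3-separating set $A$, weak 4-connectivity yields $|A| \leq 4$ or $|E(M)-A| \leq 4$, and since $|E(M)| \geq 13 > 8$ at most one side has size $\leq 4$; the small side lies in $\cT$. For (T3), if $A_1, A_2, A_3 \in \cT$ cover $E(M)$, then $|A_1|+|A_2|+|A_3| \geq |E(M)| \geq 13$, forcing some $|A_i| \geq 5$, which contradicts the definition of $\cT$. For (T4), $|E(M) - \{e\}| \geq 12 > 4$.

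For uniqueness, let $\cT'$ be any order-4 tangle of $M$. I would show $\cT' = \cT$ by climbing in cardinality. First, each singleton $\{e\}$ satisfies $\lm(\{e\}) \leq 1$, so by (T2) either $\{e\}$ or $E(M)-\{e\}$ lies in $\cT'$; (T4) rules out the latter, so $\{e\} \in \cT'$. Next, for any pair $\{a,b\}$ we have $\lm(\{a,b\}) \leq 2$ automatically, and if $\{a,b\} \notin \cT'$ then $E(M)-\{a,b\} \in \cT'$ by (T2), and $\{a\}, \{b\}, E(M)-\{a,b\}$ are three members of $\cT'$ covering $E(M)$, contradicting (T3); hence every pair lies in $\cT'$. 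The same partitioning trick handles 3-separating sets of size 3 (split as singleton $\cup$ pair) and size 4 (split as pair $\cup$ pair): if $A \notin \cT'$, then $E(M)-A \in \cT'$, and the two pieces of $A$ together with $E(M)-A$ give a forbidden covering triple. So every 3-separating $A$ with $|A| \leq 4$ lies in $\cT'$. Conversely, for a 3-separating set $A$ with $|A| \geq 5$, weak 4-connectivity gives $|E(M)-A| \leq 4$, so $E(M)-A \in \cT'$ by the previous step, and then $A \notin \cT'$ (otherwise $A, A, E(M)-A$ violate (T3), or apply (T2)). Therefore $\cT' = \cT$.

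The only real subtlety is checking that all cardinalities above are well-defined and that the two cases of (T2) do not overlap, which is exactly where the bound $|E(M)| \geq 13$ is used: it guarantees that $|A| \leq 4$ and $|E(M)-A| \leq 4$ are mutually exclusive and that complements of the small sets we manipulate are never forced into $\cT'$ by (T4). There is no deeper obstacle; the argument is essentially a bookkeeping exercise with (T2), (T3), and (T4).
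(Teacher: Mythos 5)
Your proposal is correct and uses essentially the same mechanism as the paper: identifying the tangle as the collection of $3$-separating sets of size at most four, and then using (T2), (T3), and (T4) to force every set of size at most four into any order-$4$ tangle by splitting it into pieces of size at most two (the paper runs this as a single contradiction argument on a putative difference set, while you build up from singletons to pairs to sets of size three and four, but the underlying covering argument is identical). No gaps.
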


\begin{proof}
Let $\cT$ consist of those subsets $A$ of $E(M)$ for which $\lm(A)\leq 2$ and $|A|\leq 4$. It is easily seen that
$\cT$ is a tangle in $M$. Say $\cT'$ is another order-$4$ tangle  in $M$. Then there is a
set $X$ with $|X|\leq 4$ such that $E(M)-X$ belongs to $\cT'$. Let $(Y,Z)$ be a partition
of $X$ into sets with $|Y|,|Z|\leq 2$. At least one of $Y$ or $Z$ must be $\cT'$-large,
otherwise we cover $E(M)$ by three $\cT'$-small sets. Assume that $Y$ is $\cT'$-large.
Since $Y$ is $\cT'$-large, $|Y|=2$ by (T4). Say $Y=\{y_1,y_2\}$. Then
$\{E(M)-Y,\{y_1\},\{y_2\}\}$ is a cover of $E(M)$ by $\cT'$-small sets, a contradiction. 
\end{proof}

Assume that $\cT$ is a tangle of order 4 in a matroid $M$ and let $N$  be a
weakly 4-connected minor of $M$ such that the unique tangle of order~$4$ in $N$ is generated
by $\cT$ and has breadth equal to $\breadth(\cT)$. Then we say that $N$ is a {\em witness for $\cT$}.

Let $\{\cT_1,\cT_2,\ldots,\cT_s\}$ be the collection of tangles of order~$4$ in a matroid $M$.
This is a collection of incomparable tangles and can therefore be displayed in a tree-like way \cite{GGW09}.
Then there is a collection 
$\{N_1,N_2,\ldots,N_s\}$ of minors of $M$ such that, for each
$i\in\{1,2,\ldots,s\}$, the minor $N_i$ is a witness for $\cT_i$. 
Put together, we have a weak analogue of the $2$-sum decomposition of 
a matroid with its collection of $3$-connected minors. 

The 3-connected minors associated with the 2-sum decomposition are unique up to isomorphism,
but it is evident that a tangle of order~4 in $M$ can have non-isomorphic witnesses.
Also, given the 2-sum decomposition of a matroid, we can build the original matroid from its underlying
3-connected minors. Finding an analogue of this for tangles seems ambitious.  Utilising the 
$3$-separation tree of a $3$-connected matroid described by \cite{OSW04} and the results of 
\cite{CX12}, it is possible that something could be done in the case of representable matroids.

We now consider the structure of tangle matroids.

\begin{lemma}
\label{identity}
Let $P$ be a simple rank-$3$ matroid that cannot be covered by three lines.
Then $P$ has a unique $4$-tangle $\cT$. Moreover,  $M_{\cT}=P$.
\end{lemma}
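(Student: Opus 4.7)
The plan is to establish existence via Hall's characterisation (Theorem~\ref{dennis}) and uniqueness via a rank-preserving quotient argument combined with the fact that a tangle is determined by its tangle matroid.

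For existence, observe that $P$ has rank $3$, so $P \neq U_{1,1}$, and by hypothesis no union of three hyperplanes (i.e., lines) of $P$ equals $E(P)$. Theorem~\ref{dennis} then gives that $P$ is a tangle matroid: $P = M_\cT$ for some tangle $\cT$, and by Theorem~\ref{tangle-matroid} the order of $\cT$ equals $r(M_\cT) + 1 = 4$. This $\cT$ admits the explicit description
\[
\cT = \{A \subseteq E(P) : r_P(A) \le 2\}.
\]
Indeed, Lemma~\ref{basic-stuff}(i) says $A$ is $\cT$-weak iff $r_{M_\cT}(A) < 3$, i.e., iff $r_P(A) \le 2$; and by axiom (T3a), every $\cT$-weak set $A$ with $\lm_P(A) < 3$ lies in $\cT$. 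Since $r_P(A) \le 2$ already forces $\lm_P(A) \le 2$ (as $r(P) = 3$), we conclude that $A \in \cT$ iff $r_P(A) \le 2$.

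For uniqueness, let $\cT'$ be any $4$-tangle on $P$. Then $r(M_{\cT'}) = 3 = r(P)$, and by Lemma~\ref{dennis2}, $M_{\cT'}$ is a matroid quotient of $P$. A defining property of a matroid quotient is that $r_P - r_{M_{\cT'}}$ is non-negative and non-decreasing on chains of subsets; since it vanishes at both $\emptyset$ and $E(P)$, it vanishes identically, giving $M_{\cT'} = P$. The explicit description from the previous paragraph now applies verbatim to $\cT'$, so $\cT' = \{A : r_P(A) \le 2\} = \cT$.

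The main obstacle is the quotient step: deducing $M_{\cT'} = P$ from the fact that $M_{\cT'}$ is a same-rank quotient of $P$ relies on the stronger monotonicity of the quotient rank-drop rather than the mere pointwise inequality $r_P \ge r_{M_{\cT'}}$. Everything else follows cleanly from Hall's characterisation and the basic dictionary, via Lemma~\ref{basic-stuff} and (T3a), between a tangle and its tangle matroid.
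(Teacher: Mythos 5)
Your uniqueness argument is correct and takes a genuinely different route from the paper's. The paper shows by induction on $|A|$ that every set of rank at most $2$ in $P$ is small in an arbitrary $4$-tangle $\cT'$, and hence $\cT\subseteq\cT'$; you instead force $M_{\cT'}=P$ from the same-rank quotient fact (Lemma~\ref{dennis2}) and then recover $\cT'$ from its tangle matroid via Lemma~\ref{basic-stuff}(i) and (T3a). That recovery is sound: $A\in\cT'$ if and only if $A$ is $\cT'$-weak and $\lm_P(A)\le 2$, and with $M_{\cT'}=P$ the first condition becomes $r_P(A)\le 2$, which already implies the second. The paper uses the same quotient observation, but only at the very end to identify $M_{\cT}$ with $P$. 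Your route buys a cleaner uniqueness proof at the price of invoking the quotient machinery earlier.

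The existence half has a genuine gap. Theorem~\ref{dennis} asserts only that $P$ is a tangle matroid, i.e.\ that $P=M_{\cS}$ for some tangle $\cS$ in \emph{some} matroid $N$ on $E(P)$; it does not say $N=P$, so it does not produce a $4$-tangle in $P$ itself, which is what the lemma claims. Your subsequent steps (applying (T3a) with $\lm_P$, and concluding $\cT=\{A:r_P(A)\le 2\}$) silently assume the tangle lives in $P$. A tangle in $N$ whose tangle matroid is $P$ need not be a tangle in $P$: for instance, (T2) for $P$ concerns sets with $\lm_P(A)\le 2$, and such a set can have $\lm_N(A)$ large, so neither it nor its complement need be oriented. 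Note also that your uniqueness argument is conditional --- it shows that any $4$-tangle in $P$ must equal $\{A:r_P(A)\le 2\}$ --- so it cannot stand in for existence. The repair is exactly the paper's opening step: verify directly that $\{A\subseteq E(P):r_P(A)\le 2\}$ satisfies (T1)--(T4) in $P$; here (T1) and (T2) are immediate from $r(P)=3$, (T3) is precisely the hypothesis that three lines cannot cover $E(P)$, and (T4) follows from that hypothesis together with simplicity.
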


\begin{proof}
Let $\cT$ consist of those subsets $A$ of $E(M)$ for which $r(A) \le 2$. 
Then one easily checks that $\cT$ is a tangle of order $4$. 
Assume that $\cT'$ is a tangle of order 4 in $M$. Say $A\subseteq E(M)$ and $r(A)\leq 2$.
If $|A|\leq 1$, then $A$ is $\cT'$-small. Assume that $|A|>1$ and that all proper subsets of 
$A$ are $\cT'$-small. Say $a\in A$. If $A$ is not $\cT'$-small, then
$\{\{a\},A-\{a\}, E(M)-A\}$ is a cover of $E(M)$ by $\cT'$-small sets.
Hence $A$ is $\cT'$-small. Therefore $\cT\subseteq\cT'$ and it follows that
$\cT=\cT'$.

By Theorem~\ref{dennis}, $P$ is a tangle matroid. By Lemma~\ref{dennis2}, $M_{\cT}$ is a quotient of $P$. As $r(M_{\cT}) = 3 = r(P)$, we deduce that $M_{\cT} = P$.
\end{proof}

The next result follows by combining the last two lemmas.

\begin{corollary}
\label{all-tangle-matroids}
Let $P$ be a matroid with at least thirteen elements. Then $P$ is the tangle matroid of the tangle of 
order $4$ associated with a weakly $4$-connected matroid $M$ if and only 
$P$ is simple, $r(P) = 3$, and each line of $P$ has at most four elements.
\end{corollary}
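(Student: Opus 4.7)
The strategy is to prove each direction of the equivalence separately, combining the two preceding lemmas with elementary rank arithmetic in a rank-three matroid.

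For the backward direction, suppose $P$ is simple of rank three, with $|E(P)|\geq 13$ and no line of size greater than four. Since any three lines cover at most twelve elements, $P$ is not covered by three lines, so Lemma~\ref{identity} applies and yields a unique order-$4$ tangle $\cT$ in $P$ with $M_{\cT}=P$. It then remains to realise $P$ as the tangle matroid of a weakly $4$-connected matroid; I plan simply to take $M:=P$. Weak $4$-connectivity of $P$ is pure rank bookkeeping in rank three: any separation $(A,E(P)-A)$ with $\lm_P(A)\leq 2$ gives $r(A)+r(E(P)-A)\leq 5$, so one side has rank at most two, lies in a line, and hence has at most four elements. The same arithmetic, combined with $|E(P)|\geq 13$, rules out coloops, series pairs, and any other failure of $3$-connectivity. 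Uniqueness of the order-$4$ tangle at this stage is provided by Lemma~\ref{one-tangle}.

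For the forward direction, suppose $P=M_{\cT}$, where $\cT$ is the (necessarily unique, by Lemma~\ref{one-tangle}) order-$4$ tangle of a weakly $4$-connected matroid $M$ with $|E(M)|\geq 13$. Then $r(P)=3$ by Theorem~\ref{tangle-matroid}, and Lemma~\ref{3-con} gives that $P$ is $3$-connected, and hence simple. The only substantive step is the line bound: suppose for contradiction that some line $L$ of $P$ has $|L|\geq 5$. By Lemma~\ref{basic-stuff}(ii), $L$ is a $\cT$-small exactly $3$-separating set of $M$, and weak $4$-connectivity of $M$ forces $|E(M)-L|\leq 4$; meanwhile (T4) forces $|E(M)-L|\geq 2$. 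The plan is to partition $E(M)-L$ into one or two blocks each of size at most two and so cover $E(M)$ by three $\cT$-small sets, contradicting (T3).

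The one point requiring care, and the main (mild) obstacle, is verifying that these blocks are indeed $\cT$-small. Singletons are $\cT$-small directly from (T2) and (T4). For a pair $\{x,y\}\subseteq E(M)-L$ in the cases $|E(M)-L|\in\{3,4\}$, axiom (T2) puts one of $\{x,y\}$ or $E(M)-\{x,y\}$ into $\cT$; but $E(M)-\{x,y\}$ contains $L$ together with some further point off $L$, so it spans $M_{\cT}$, and Lemma~\ref{basic-stuff}(i) then shows it is $\cT$-strong and in particular not $\cT$-small, forcing $\{x,y\}\in\cT$. The resulting three-set $\cT$-small cover contradicts (T3); the boundary case $|E(M)-L|=2$ is immediate from the cover $\{L,\{x\},\{y\}\}$. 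This completes the contradiction and the proof.
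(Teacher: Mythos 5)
Your proof is correct and follows essentially the same route as the paper: the backward direction is identical (take $M:=P$, apply Lemma~\ref{identity}, and check weak $4$-connectivity by rank arithmetic), and the forward direction uses the same facts, differing only in that you derive $|E(M)-L|\le 4$ from weak $4$-connectivity and contradict (T3) with an explicit three-set $\cT$-small cover, whereas the paper derives $|E(M)-L|\ge 5$ from the three-lines property and contradicts weak $4$-connectivity. Both versions of that final step are sound and of equal length.
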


\begin{proof} 
Let $M$ be a weakly $4$-connected matroid with at least thirteen elements. By Lemma~\ref{one-tangle}, $M$ has a unique tangle $\cT$ of order 4. 
The tangle matroid $M_{\cT}$ is simple and has rank three. By Theorem~\ref{dennis}, $M_{\cT}$ cannot be covered by three lines.
Suppose $M_{\cT}$ has a line $L$ with at least five points. Then $\lm_M(L) = 2$. But, since $E(M) - L$ cannot be covered by two lines of $M_{\cT}$, it follows that 
 $|E(M) - L| \ge 5$. Then $(L, E(M) - L)$ is a $3$-separation of $M$ that contradicts the fact that $M$ is weakly $4$-connected. We deduce that each line of $M_{\cT}$ 
 has at most four elements.
 
 Conversely, let $P$ be a simple rank-$3$ matroid in which each line has most four elements. If $|E(P)| \ge 13$, then $E(P)$ cannot be covered by three lines. Thus, by 
 Lemma~\ref{identity}, $P$ has a unique tangle $\cT$ of order $4$ and $M_{\cT} = P$. Now $P$ is certainly $3$-connected. Moreover, because each line has at most four elements, 
 $P$ is weakly $4$-connected.
\end{proof}

\begin{lemma}
\label{breadth-root}
Let $M$ be a weakly $4$-connected matroid with at least thirteen elements and let
$\cT$ be the tangle of order $4$ associated with $M$.
Then $\breadth(\cT)\geq \sqrt{|E(M)|}$.
\end{lemma}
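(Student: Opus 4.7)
The plan is to argue entirely inside the tangle matroid $M_\cT$, using the structural description given by Corollary~\ref{all-tangle-matroids}. That corollary tells us $M_\cT$ is a simple rank-$3$ matroid on $E(M)$ in which every line contains at most four points. By definition, $\breadth(\cT)$ is the size of the largest set $S\subseteq E(M_\cT)$ for which $M_\cT|S$ is uniform and spanning; since $r(M_\cT)=3$, this is exactly the largest "cap" in $M_\cT$, i.e., the largest $S$ with $|S|\ge 3$ and no three points of $S$ collinear.

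First I would verify that such an $S$ exists. Because $M_\cT$ is simple of rank $3$, the ground set is not contained in a single line, so three non-collinear points are immediately available; this gives a legitimate starting cap of size three. Then I would choose $S$ to be a maximum cap of $M_\cT$.

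The main step is a double count. By maximality of $S$, every point $p\in E(M_\cT)-S$ lies on some line through two points of $S$, for otherwise $S\cup\{p\}$ would still have no three collinear, contradicting maximality. The number of lines spanned by pairs of points of $S$ is at most $\binom{|S|}{2}$, and each such line has at most four points by Corollary~\ref{all-tangle-matroids}, hence at most two points outside $S$. Therefore
\[
|E(M)|-|S| \;\le\; 2\binom{|S|}{2} \;=\; |S|(|S|-1),
\]
which rearranges to $|E(M)|\le |S|^{2}$, giving $\breadth(\cT)=|S|\ge \sqrt{|E(M)|}$.

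There is no real obstacle here beyond invoking the right structural input: the hypothesis $|E(M)|\ge 13$ ensures $M$ has a unique $4$-tangle (so $\breadth(\cT)$ is well-defined in the sense discussed), and Corollary~\ref{all-tangle-matroids} supplies exactly the two facts needed — simplicity with rank three and the four-point bound on lines — to run the greedy/extremal cap argument.
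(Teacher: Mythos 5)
Your argument is correct and is essentially the paper's own proof: both take a maximal/maximum spanning uniform restriction (a cap) of the rank-$3$ tangle matroid, observe that every other element lies on a line spanned by a pair of cap elements, and use the four-point bound on lines to get $|E(M)|\le \beta+2\binom{\beta}{2}=\beta^2$. The only difference is that you cite Corollary~\ref{all-tangle-matroids} explicitly for the structural facts the paper leaves implicit, which is fine.
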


\begin{proof}
Let $\breadth(\cT)=\beta$ and let  
 $U$ be a witness of $\breadth(\cT)$.
Then $U$ is a spanning uniform restriction of the 
3-connected rank-3 matroid $M_{\cT}$. All other elements of $E(M)$ must lie on lines spanned
by pairs of elements of $U$. There are $\beta\choose 2$ such pairs, and each associated line has at most
four elements. We deduce that $|E(M)| \le \beta +2{\beta\choose 2}=\beta^2$. 
\end{proof}

We do not know the best-possible bound that can be given on $\breadth(\cT)$ in Lemma~\ref{breadth-root}. 

\section{Can we do Better?} \label{better}
It is natural to ask whether the connectivity condition on $M$ in Theorem~\ref{breadth-crit} can be strengthened beyond $M$ being weakly 4-connected. In this section, we 
present an example that shows that Theorem~\ref{breadth-crit} is in some sense best possible.

Say $s\geq 6$, and let $E=\{e_1,e_2,\ldots,e_s\}$. Let $M_1$ be a matroid on $E$ 
with $M_1\cong U_{3,s}$. Let $M_2$ be isomorphic to $M(K_4)$ and have ground set 
$\{a,b,c,d,e,f\}$ where $\{c,d,e\}$, $\{a,b,e\}$, $\{b,c,f\}$, and $\{a,d,f\}$ are triangles.
Observe that $\{e,f\}$ is not contained in a triangle. Consider $M_1\oplus M_2$. 
Extend this matroid by placing elements $f_1$ and $f_2$ freely on the lines 
$\{e,e_1\}$ and $\{f,e_2\}$, respectively. Extend the resulting matroid by placing 
elements $g_1$ and $g_2$ freely on the flats $E\cup\{f_1,e\}$
and $E\cup\{f_2,f\}$, respectively. Finally, delete the 
elements $e$ and $f$ to obtain  a matroid $M$ with ground set 
$\{a,b,c,d,f_1,f_2,g_1,g_2,e_1,\ldots,e_s\}$.

It is readily checked that $M$ has no triangles. Indeed $M$ 
is weakly $4$-connected with $|E(M)|=s+8$. Let $\cT$ denote  the
unique tangle of order $4$ in $M$.  All maximal $\cT$-weak 3-separating sets are 
pairs except the $4$-element $3$-separating set $\{a,b,c,d\}$. It follows that,  
for any pair $\{x,y\}\subseteq \{a,b,c,d\}$, we have  $M_{\cT}\ba x,y\cong U_{3,s+6}$. In other 
words, $\cT$ has breadth $s+6$. 

Consider the matroids $M/a$ and $M\ba a$. We shall show that $\cT$ generates tangles in
both of these matroids. First focus on $M/a$. Let $\cT'$ denote the unique tangle of order $4$ in $M/a$. 
Then $\cT'$ contains $\{A - {a}: A \in \cT\}$. Hence $\cT$ generates $\cT'$. 
The triangles $\{b,f_1,e_1\}$ and $\{d,f_2,e_2\}$ of $M/a$ guarantee that
a uniform restriction of $M_{\cT'}$ contains at most two elements of each of these sets. 
Hence $\breadth(\cT')\leq |E(M)/a|-2=s+5<\breadth(\cT)$. Now focus on $M\ba a$. Let 
$\cT''$ denote the unique tangle of order $4$ in $M\ba a$. Since $\cT''$ contains $\{A - {a}: A \in \cT\}$, it follows that 
$\cT''$ is generated by $\cT$ in $M\ba a$. The sets $\{b,f_1,g_1\}$ and
$\{d,f_2,g_2\}$ are $\cT''$-weak triads of $M\ba a$. Hence they are triangles of
$\cT''$. Arguing just as in the previous case, we deduce that $\breadth(\cT'')< \breadth(\cT)$.

The bijection on $E(M)$ that interchanges $a$ and $d$, interchanges $b$ and $c$, and fixes every other element is an automorphism of $M$; 
so is the  bijection that   interchanges $c$ and $d$, interchanges $a$ and $b$, and fixes every other element. 
Furthermore it is readily verified that $M$ is breadth critical. 

\begin{proposition}
\label{weak-stuck}
There exists a breadth-critical tangle of order $4$ in a weakly $4$-connected matroid that has a 
$4$-element $3$-separator. 
\end{proposition}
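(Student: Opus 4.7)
The plan is to verify that the matroid $M$ constructed in the preceding paragraphs realises all three required properties: $M$ is weakly $4$-connected, the set $\{a,b,c,d\}$ is a $4$-element exactly $3$-separating set of $M$, and the unique order-$4$ tangle $\cT$ of $M$ is breadth-critical.

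First I would pin down the structural facts. The free placements of $f_1, f_2$ on the lines $\{e,e_1\}$ and $\{f,e_2\}$, and then of $g_1, g_2$ on rank-$3$ flats, build $M_1 \oplus M_2$ into a highly non-degenerate rank-$4$ matroid in which the only triangles involving $f_1, f_2, g_1, g_2$ are $\{e,e_1,f_1\}$ and $\{f,e_2,f_2\}$, since free placement on a rank-$3$ flat creates no new triangle. These two triangles and all triangles of the $M(K_4)$-component contain $e$ or $f$, so deleting $e$ and $f$ leaves $M$ triangle-free. A routine inspection of potential small separations confirms that $M$ is $3$-connected. For weak $4$-connectivity I would show that every $3$-separation of $M$ has one side of size at most $4$: the large $U_{3,s}$-core together with the free placements forces any $3$-separating set meeting $E(M) - \{a,b,c,d\}$ in five or more elements to fail $3$-separation, and the only $3$-separating set meeting $\{a,b,c,d\}$ in at least three elements is $\{a,b,c,d\}$ itself. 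Finally, $\lm_M(\{a,b,c,d\}) = r_M(\{a,b,c,d\}) + r_M(E(M) - \{a,b,c,d\}) - r(M) = 3 + 3 - 4 = 2$, giving the required $3$-separator.

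Next I would compute $\breadth(\cT)$. Since $|E(M)| = s + 8 \geq 13$, Lemma~\ref{one-tangle} identifies $\cT$ as the unique order-$4$ tangle of $M$, and Corollary~\ref{all-tangle-matroids} implies that $M_\cT$ is a simple rank-$3$ matroid whose lines have at most four points. From the description of the maximal $\cT$-weak $3$-separating sets in the example's setup, the only non-trivial line of $M_\cT$ is $\{a,b,c,d\}$ and all other points of $M_\cT$ are in general position. A largest spanning uniform restriction of $M_\cT$ therefore consists of all $s+4$ elements outside $\{a,b,c,d\}$ together with any two of $\{a,b,c,d\}$, giving $\breadth(\cT) = s + 6$.

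Finally, for breadth-criticality, I would use the automorphisms of $M$ recorded in the example (together with the obvious $U_{3,s}$-symmetries) to reduce to one representative from each orbit, namely $a$, $e_1$, $e_3$, $f_1$, and $g_1$. For each representative $x$ and each minor $N \in \{M \ba x, M/x\}$, I would check that $\cT$ generates an order-$4$ tangle $\cT'$ in $N$ and then exhibit a concrete three-element $\cT'$-weak circuit or cocircuit of $N$; such a set is forced to be a line of $M_{\cT'}$ of size at least three, and this removes at least one point from every spanning uniform restriction, yielding $\breadth(\cT') < \breadth(\cT)$. For $x = a$ the witnesses are already named in the text: the triangles $\{b,f_1,e_1\},\{d,f_2,e_2\}$ in $M/a$ and the triads $\{b,f_1,g_1\},\{d,f_2,g_2\}$ in $M\ba a$. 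The main obstacle is extending this step to the remaining representatives, where one must carefully track how the free placements and the deletion of $e, f$ interact with each further single-element minor operation to produce the required short circuit or cocircuit.
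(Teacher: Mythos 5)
Your proposal follows the paper's own argument essentially verbatim: the same construction, the same breadth computation $\breadth(\cT)=s+6$, the same disjoint three-element witnesses in $M/a$ and $M\ba a$, and the same reduction by automorphisms, with the remaining orbit representatives left unchecked just as the paper leaves them (``it is readily verified that $M$ is breadth critical''). One small correction to your general criterion: a single three-element line of $M_{\cT'}$ only forces $\breadth(\cT')\le |E(N)|-1=s+6=\breadth(\cT)$, so to get the strict inequality you need two disjoint short lines (as in the witnesses you quote for $x=a$) or an explicit accounting against the remnant of the four-point line $\{a,b,c,d\}$.
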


The previous example was based on a 4-element set that is both a circuit and a cocircuit. 
It is also possible to construct examples for graphic
matroids where the
4-element 3-separator is a fan. Let $G$ be a graph constructed as
follows. Begin with 
a simple 4-connected graph $H$ with no triangles that has 
a stable set $\{5,6,7,8\}$ of vertices. Let
$\{1,2,3,4\}$ be an additional set of vertices and add the edges $\{25,26,36,37,47,48,24,12,13,14\}$.
Then $M(G)$ is weakly 4-connected with a fan $\{12,13,14,24\}$ and, apart from that fan, all 
fully-closed 3-separators have size at most 2. 
Let $\cT$ be the unique tangle of order 4 in $M(G)$. Then
it is readily verified that $\cT$ is a breadth-critical tangle in $M(G)$.

\section{Back to $k$-connected Sets} 
\label{back}

We now return to our original assertion about $k$-connected sets in matroids. The next theorem is a 
restatement of Theorem~\ref{get-weak}.

\begin{theorem}
\label{k-connected}
Let $k\geq 4$ be an integer and $M$ a matroid with an $n$-element $k$-connected set where 
$n\geq 3k-5$.
Then $M$ has a weakly $4$-connected minor with an $n$-element $k$-connected set.
\end{theorem}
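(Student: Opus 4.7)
The plan is to chain three earlier results together, using breadth as the bridge between $k$-connected sets and $k$-tangles. Let $Z$ be the given $n$-element $k$-connected set in $M$, with $n \geq 3k-5$ and $k \geq 4$. First, I would apply Lemma~\ref{get-k-tangle}: since $n \geq 3k-5$, the set $Z$ generates a tangle $\cT_Z$ of order $k$ in $M$ with $M_{\cT_Z}|Z \cong U_{k-1,n}$. In particular, $Z$ is the ground set of a spanning uniform restriction of $M_{\cT_Z}$, so $\breadth(\cT_Z) \geq n$.

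Next, I would invoke Corollary~\ref{find-minor} (the minor form of Theorem~\ref{biggy}) applied to $\cT_Z$: because $\cT_Z$ has order $k \geq 4$, this yields a weakly $4$-connected minor $N$ of $M$ together with a tangle $\cT'$ of order $k$ in $N$ such that $\cT'$ is generated by $\cT_Z$ and $\breadth(\cT') = \breadth(\cT_Z) \geq n$. Then I would apply Lemma~\ref{all-up} to $\cT'$ in $N$ to extract a subset $Z' \subseteq E(N)$ with $|Z'| = \breadth(\cT') \geq n$ and $M_{\cT'}|Z' \cong U_{k-1,|Z'|}$; by Lemma~\ref{get-k-con}, $Z'$ is $k$-connected in $N$. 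If $|Z'| > n$, I would take any $n$-element subset $Z''$ of $Z'$; directly from the definition, any subset of a $k$-connected set is again $k$-connected, since for each $A \subseteq E(N)$,
$$\min\{|A \cap Z''|,\, |Z'' - A|,\, k-1\} \;\leq\; \min\{|A \cap Z'|,\, |Z' - A|,\, k-1\} \;\leq\; \lm_N(A).$$
Hence $N$ contains an $n$-element $k$-connected set, as required.

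Honestly, there is no serious obstacle: the statement reduces by a short bookkeeping argument to Corollary~\ref{find-minor} together with the breadth-to-$k$-connected-set dictionary built up in Lemmas~\ref{get-k-tangle}, \ref{get-k-con}, and \ref{all-up}. The only mild subtlety is that the extracted set $Z'$ in $N$ need not equal, or even be contained in, the original $Z$; this is harmless because breadth is preserved along the minor produced by Corollary~\ref{find-minor}, and $k$-connectedness descends to subsets. All of the real work has already been done in the proof of Theorem~\ref{biggy}.
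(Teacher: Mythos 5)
Your proposal is correct and follows essentially the same route as the paper: Lemma~\ref{get-k-tangle} to produce a $k$-tangle of breadth at least $n$, Corollary~\ref{find-minor} (the paper cites Theorem~\ref{breadth-crit}, but means its iterated form) to pass to a weakly $4$-connected minor preserving breadth, and Lemma~\ref{get-k-con} to convert the resulting uniform restriction of the tangle matroid back into a $k$-connected set. Your explicit remark that subsets of $k$-connected sets are $k$-connected tidies up a step the paper leaves implicit.
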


\begin{proof}
By Lemma~\ref{get-k-tangle}, $M$ has a tangle of order $k$ and breadth at least $n$. 
By Theorem~\ref{breadth-crit},
$M$ has a weakly $4$-connected minor $N$ with a tangle $\cT$ of order $k$ and breadth 
$m\geq n$. By the definition of breadth, $N$ has an $m$-element set $Z$ such that $N_\cT|Z\cong U_{k-1,m}$.
By Lemma~\ref{get-k-con}, $Z$ is a $k$-connected set in $N$.
\end{proof}

\section{Discussion}
A tangle $\cT$ of order $k$ in a matroid $M$ identifies a ``$k$-connected component'' of
$M$ and we have used the notion of breadth to measure the size of such a component. Thus,
if we are interested in measuring the ``size'' of a $4$-connected component, we need 
a tangle of order exactly 4. Nonetheless, Theorem~\ref{breadth-crit} is potentially of interest for larger
values of $k$. Let $t\geq 0$ be an integer, and let $(s_0,s_1,\ldots,s_t)$ be a sequence of
non-negative integers. Then a matroid $M$ is {\em $(s_0,s_1,\ldots,s_t)$-connected} if,
whenever $F\subseteq E(M)$ has $\lm(F)=i$ for $i\in\{0,1,\ldots,t\}$, 
either $|F|\leq s_i$ or $|E(M)-F|\leq s_i$. 

In this terminology, a matroid is weakly 4-connected if and only if
it is $(0,1,4)$-connected.
Thus we have proved that if $\cT$ is a breadth-critical tangle of order at least
4 in a matroid $M$, then $M$ is $(0,1,4)$-connected. We conjecture the following.

\begin{conjecture}
\label{con-1}
There is an infinite sequence $(s_0,s_1,s_2,\ldots)$ such that,  for 
all $k\geq 2$, if 
$\cT$ is a breadth-critical tangle of order at least $k$ in a matroid $M$, then 
$M$ is $(s_0,s_1,\ldots,s_{k-2})$-connected.
\end{conjecture}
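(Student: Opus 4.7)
The plan is to proceed by induction on $i$, establishing values $s_0, s_1, s_2, \ldots$ one at a time. The base cases are already in the paper: $s_0 = 0$ by Corollary~\ref{get-con}, $s_1 = 1$ by Corollary~\ref{2-tit-3}, and $s_2 = 4$ by Theorem~\ref{breadth-crit}. For the inductive step, assume $\cT$ is a breadth-critical tangle of order at least $k \geq i+2$ in $M$, and that $s_0, \ldots, s_{i-1}$ have been chosen. If $(X,Y)$ is an $(i+1)$-separation with $|X|,|Y| > s_i$, replace $X$ by a maximal $\cT$-small $(i+1)$-separating set $F$ containing $X$; by Lemma~\ref{basic-stuff}, $F$ is a rank-$i$ flat of $M_\cT$, and by Corollary~\ref{full-closed}, $F$ is fully closed in $M$. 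The inductive hypothesis guarantees that every fully closed $j$-separating set of $M$ with $j < i$ has a small side, so $M$ is already ``highly connected below level $i$''.

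First I would seek an element $a \in F$ and a minor $N \in \{M\ba a, M/a\}$ such that $N$ is 3-connected, $\lm_N(F - \{a\}) = \lm_M(F) = i$, and $F - \{a\}$ is titanic in $N$; Lemma~\ref{tit-tangle} then produces a tangle $\cT_a$ of order $k$ in $N$ generated by $\cT$. Next I would show $a$ can be chosen so that $\breadth(\cT_a) = \breadth(\cT)$, contradicting breadth-criticality of $\cT$. The key inputs for the first task would be a higher-rank analog of Lemma~\ref{interior-decoration}, asserting that a fully closed exactly $(i+1)$-separating set $F$ of sufficiently large size in a 3-connected matroid either has one of finitely many ``exceptional'' structures of bounded size (generalizing the 4-element fan), or satisfies $|\inter_M(F)| \geq f(i)$ for some unbounded function $f$. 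For the second task, generalizations of Lemma~\ref{int-free} (interior elements are freely placed on $F$ in $M_\cT$) and Lemma~\ref{free-breadth} (a witness for $\breadth(\cT)$ can be shifted to avoid all but at most $i$ elements of $F$) would allow the removal of an interior element without disturbing a chosen witness, paralleling the proof of Lemma~\ref{keep-int}. When the interior is empty but some guts or coguts class is large, a guts-removal argument generalizing Lemma~\ref{guts-away} should apply.

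The principal obstacle is establishing the right higher-rank classification of small exceptional fully closed $(i+1)$-separating sets in 3-connected matroids. For $i = 2$, these are essentially lines, colines, and 4-element fans, giving the sharp bound $s_2 = 4$. For $i \geq 3$, one expects a proliferation of exceptional configurations (segment/cosegment pairs, higher fans, ``spike-like'' cyclic flats and their duals, and so on), and a clean enumeration of these is likely the hardest step; the resulting $s_i$ may grow rapidly with $i$. A secondary obstacle is ensuring that the titanic property of $F - \{a\}$ persists in $N$: tri-covers of $F - \{a\}$ by sets of connectivity less than $i$ must be excluded, which at rank 2 was automatic from 3-connectivity but at higher rank requires exploiting the inductive hypothesis to rule out internal low-connectivity partitions of $F$. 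A third, more technical, obstacle is the selection of an element $a$ for which some $N \in \{M\ba a, M/a\}$ is 3-connected; at rank 2 this relied on a theorem of Oxley~\cite{O03}, and an analogous reduction for higher rank fully closed separators would need to be developed or quoted.
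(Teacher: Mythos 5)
This statement is Conjecture~\ref{con-1}: the paper itself offers no proof, only the cases $i\in\{0,1,2\}$ via Corollary~\ref{get-con}, Corollary~\ref{2-tit-3}, and Theorem~\ref{breadth-crit}, and explicitly leaves the general case open. Your proposal is therefore not being measured against an argument in the paper but against the standard of being a proof at all, and it is not one: it is a plausible research programme whose inductive step rests entirely on lemmas that you describe as ``would need to be developed.'' The base cases you cite are correct and do match the paper, and the overall shape of your induction (pass to a maximal $\cT$-small $(i+1)$-separating set $F$, which is a rank-$i$ flat of $M_\cT$ and fully closed in $M$; remove an element so that the tangle is generated in the minor via Lemma~\ref{tit-tangle}; then protect a witness for the breadth) is a faithful extrapolation of the paper's $k=4$ machinery. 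But extrapolation is where it stops.

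The concrete gaps are the three you name, and each is fatal to the argument as written rather than merely technical. First, without a higher-rank analogue of Lemma~\ref{interior-decoration} the value $s_i$ is not even defined, so the conjecture's quantifier ``there is an infinite sequence'' is not discharged; you cannot induct on $i$ if the inductive step does not produce a specific finite bound. Second, the titanic condition genuinely degrades at higher rank: Lemma~\ref{3-tit} gives titanicity of an exactly $3$-separating set for free from $3$-connectivity, but for $\lm(F)=i\geq 3$ a set $F-\{a\}$ can admit a tri-cover by sets of connectivity less than $i$ even in a $3$-connected, indeed $(s_0,\ldots,s_{i-1})$-connected, matroid; Lemma~\ref{tit-tangle} is then unavailable and no tangle is generated in the minor. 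Saying this ``requires exploiting the inductive hypothesis'' identifies the problem without solving it. Third, the existence of an element $a\in F$ with $M\backslash a$ or $M/a$ $3$-connected relied at rank $2$ on the quoted theorem of Oxley~\cite{O03} applied to essential elements of a fully closed $3$-separator; no analogous removal theorem for higher-order fully closed separators is known to you or supplied by you. The proposal is a sensible map of the territory, and its honesty about the obstacles is a virtue, but as a proof of the conjecture it is incomplete at every load-bearing point.
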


For a stronger conjecture, one might speculate as to what a suitable sequence could be.
Observe that a $4$-separating set $F$ in a $(0,1,4)$-connected matroid is guaranteed to
be titanic if $|F|\geq (3\times 4)+1=13$. Define the sequence
$(t_0,t_1,t_2,\ldots)$ by $t_0=0$,  and, otherwise, $t_i=3t_{i-1}+1$. 
Note that $t_i=(3^i-1)/2$. 

\begin{conjecture}
\label{con-2}
If $\cT$ is a breadth-critical tangle of order at least $k$ in a matroid $M$,
then $M$ is $(t_0,t_1,\ldots, t_{k-2})$-connected.
\end{conjecture}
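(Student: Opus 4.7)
The plan is to prove Conjecture~\ref{con-2} by induction on $i$, showing that in any breadth-critical tangle $\cT$ of order $\geq k$ in a matroid $M$, every $\cT$-small set $F$ with $\lm_M(F) = i \leq k-2$ satisfies $|F| \leq t_i$. The base cases $i \in \{0,1,2\}$ are supplied by Corollary~\ref{get-con} ($t_0=0$), Corollary~\ref{2-tit-3} ($t_1 = 1$), and Theorem~\ref{breadth-crit} itself ($t_2 = 4$), so the work is in the inductive step for $i \geq 3$.

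Fix $i \geq 3$ and suppose the bound holds for all smaller indices. Let $F$ be a maximal $\cT$-small set with $\lm_M(F) = i$; by Lemma~\ref{basic-stuff}(ii), $F$ is a rank-$i$ flat of $M_\cT$, and by Corollary~\ref{full-closed}, $F$ is fully closed in $M$. Suppose for contradiction that $|F| > t_i = 3t_{i-1}+1$. First I would show $F$ is titanic in $M$: any $\{X,Y,Z\} \subseteq F$ with $X \cup Y \cup Z = F$ and $\lm_M(X), \lm_M(Y), \lm_M(Z) < i$ consists of three $\cT$-small subsets of $F$ (by T3a), each with $\lm < i$, so by the induction hypothesis applied to $M$ and $\cT$, each has size at most $t_{i-1}$, forcing $|F| \leq 3t_{i-1} < t_i$. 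Lemma~\ref{tit-cover} then yields that $F$ is titanic.

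Next, I would choose an element $a \in \inter_M(F) = F - (\guts(F)\cup \coguts(F))$ and pass to a suitable $N \in \{M\ba a, M/a\}$. With $a$ in the interior of $F$, one has $\lm_N(F-\{a\}) = \lm_M(F) = i$. To invoke Lemma~\ref{tit-tangle} (or its deletion dual) and produce a generated tangle $\cT'$ in $N$, I must verify that $F - \{a\}$ is titanic in $N$. This is done by the same counting as in Step~1, together with the inequalities $\lm_M(X) \leq \lm_N(X) + 1$ and $\lm_M(X) \geq \lm_N(X)$: either the induction hypothesis in $M$ gives $|X| \leq t_{i-1}$ immediately, or $\lm_M(X) = i$ and $X \subsetneq F$, which one rules out using the fact that $F$ is a flat of $M_\cT$ and that Type~I/II bookkeeping of Lemma~\ref{canon} obstructs such sets from participating in a cover that avoids $a$. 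Finally, using Lemmas~\ref{freer}, \ref{static}, and the "freely placed interior'' argument of Lemma~\ref{int-free}, one checks that a witness $U$ for $\breadth(\cT)$ can be chosen with $U \cap F$ confined to two freely placed elements of $\inter_M(F)$, neither equal to $a$; then $U$ remains a witness for $\breadth(\cT')$ by Lemma~\ref{static}, so $\breadth(\cT') = \breadth(\cT)$, contradicting breadth-criticality.

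The main obstacle is the rank-$i$ analog of the rank-$2$ structure theorem, Lemma~\ref{interior-decoration}. For $i = 2$ one exploits a very precise dichotomy between guts, coguts, and interior on fully closed $3$-separating sets, and the fact that interior elements are freely placed on lines of $M_\cT$ (Lemma~\ref{int-free}); an analogous classification is needed for fully closed $i$-separating sets of arbitrary connectivity. Specifically, one needs that when $|F|$ is sufficiently large, $\inter_M(F)$ is nonempty and its elements are freely placed on the rank-$i$ flat $F$ of $M_\cT$, together with a rank-$i$ counterpart to Lemma~\ref{free-breadth} allowing a witness for breadth to be adjusted onto interior elements. Secondary obstacles are the need to handle the three "extreme'' cases (analogs of $F$ being a line, a coline, or a small fan), which may contribute boundary corrections, and the fact that the generated tangle $\cT'$ in $N$ need not itself be breadth-critical, so one may need to iterate the reduction while maintaining the inductive bound through the chain of minors using Lemma~\ref{transitive}. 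The additive $+1$ in $t_i = 3t_{i-1}+1$ is precisely the slack required to keep $|F-\{a\}| > 3t_{i-1}$ after a single-element removal, ensuring that the titanic property propagates to $N$.
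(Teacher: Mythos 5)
This statement is Conjecture~\ref{con-2}, which the paper leaves open; there is no proof in the paper to compare against, only the authors' motivating remark that a $4$-separating set in a $(0,1,4)$-connected matroid is titanic once it has at least $3\cdot 4+1=13$ elements. Your outline correctly reconstructs that motivation: the induction on $i$, the base cases $i\in\{0,1,2\}$ from Corollary~\ref{get-con}, Corollary~\ref{2-tit-3}, and Theorem~\ref{breadth-crit}, and the counting argument showing that a $\cT$-small rank-$i$ flat $F$ with $|F|>3t_{i-1}$ is titanic in $M$ (via (T3a), the inductive bound, and Lemma~\ref{tit-cover}) are all sound and are surely the intended skeleton. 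But what you have written is a strategy, not a proof, and the parts you defer are exactly the parts that make this a conjecture rather than a theorem.

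Concretely, three steps are missing and at least one as stated does not go through. First, you need an element $a$ of $F$ with $\lm_N(F-\{a\})=\lm_M(F)$ for some $N\in\{M\ba a,M/a\}$ such that the whole reduction can proceed; for $i=2$ this rests on the detailed case analysis of Lemmas~\ref{interior-decoration} and \ref{interior} (lines, colines, fans, cocircuits, and an appeal to \cite{O03}), and no rank-$i$ analogue exists. Second, your argument that $F-\{a\}$ is titanic in $N$ breaks down: a cover of $F-\{a\}$ by sets with $\lm_N(X)<i$ only gives $\lm_M(X)\le i$, and when $\lm_M(X)=i$ the inductive hypothesis in $M$ gives no bound on $|X|$; the paper's $i=2$ version (Lemma~\ref{3-tit}) avoids this by using $3$-connectivity of $N$ itself, which for general $i$ would require knowing that $N$ is already $(t_0,\ldots,t_{i-1})$-connected --- but, as you note yourself, the generated tangle in $N$ need not be breadth-critical, so this is circular. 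Your appeal to ``Type~I/II bookkeeping of Lemma~\ref{canon}'' does not supply the missing bound. Third, breadth preservation requires rank-$i$ analogues of Lemmas~\ref{int-free} and \ref{free-breadth}: a witness $U$ can meet a rank-$i$ flat of $M_\cT$ in up to $i$ elements, the notion of ``freely placed'' and the exchange argument of Lemma~\ref{free-breadth}(ii) are developed only for rank-$2$ flats, and Lemma~\ref{static} alone does not control ranks of sets meeting $F-\{a\}$ properly. Until these are supplied, the conjecture remains open.
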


Let $\cT$ be an order-$k$ tangle  in a matroid $M$; say $t\in\{2,3,\ldots,k-1\}$. Then it
is easily seen that the collection 
$T_t(\cT)=\{A\in\cT:\lm(A)\leq t-2\}$ is a tangle of order $t$ in $M$. We say that
$T_t(\cT)$ is the {\em truncation of $\cT$ to order $t$}. Truncations of tangles
correspond to truncations of their tangle matroids.

\begin{lemma}
\label{truncate}
Let $\cT$ be a tangle of order $k$ in a matroid $M$, say $t\in\{2,3,\ldots,k-1\}$
and let $T_t(\cT)$ denote the truncation of $\cT$ to order $t$. 
Then $M_{T_t(\cT)}$ is the truncation to rank $t-1$ of $M_{\cT}$.
\end{lemma}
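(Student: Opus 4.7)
The plan is to show that $M_{T_t(\cT)}$ and the rank-$(t-1)$ truncation of $M_{\cT}$ have the same rank function, from which the equality of the matroids follows. Both matroids have ground set $E(M)$ and rank $t-1$ (the truncation by construction, and $M_{T_t(\cT)}$ because $T_t(\cT)$ has order $t$), so it suffices to compare ranks of arbitrary subsets $A \subseteq E(M)$.

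First I would verify, as claimed in the paragraph preceding the lemma, that $T_t(\cT)$ is a tangle of order $t$: conditions (T1), (T3), and (T4) are inherited directly from $\cT$, and (T2) holds because any $A$ with $\lm_M(A) \le t-2 \le k-2$ has one of $A, E(M) - A$ in $\cT$, and that set is then in $T_t(\cT)$ since it has connectivity at most $t-2$. Next, I would partition the subsets $A$ of $E(M)$ by the value of $r_{M_{\cT}}(A)$ and compute $r_{M_{T_t(\cT)}}(A)$ in each case using Lemma~\ref{basic-stuff}(i). Note the key observation: every $T_t(\cT)$-small set is $\cT$-small (with $\lm \le t-2$), so the collection of $T_t(\cT)$-small supersets of $A$ is exactly the collection of $\cT$-small supersets of $A$ with connectivity at most $t-2$.

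In the case $r_{M_{\cT}}(A) \le t-2$, there is a $\cT$-small $B \supseteq A$ with $\lm_M(B) = r_{M_{\cT}}(A) \le t-2$; such $B$ lies in $T_t(\cT)$, so $A$ is $T_t(\cT)$-weak and the minimum defining $r_{M_{T_t(\cT)}}(A)$ is attained at $B$, giving $r_{M_{T_t(\cT)}}(A) = r_{M_{\cT}}(A)$. In the case $r_{M_{\cT}}(A) \ge t-1$, no $\cT$-small superset of $A$ has connectivity below $t-1$, so no superset of $A$ is $T_t(\cT)$-small; hence $A$ is $T_t(\cT)$-strong and $r_{M_{T_t(\cT)}}(A) = t-1$. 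Combining the two cases yields
\[
r_{M_{T_t(\cT)}}(A) \;=\; \min\{r_{M_{\cT}}(A),\, t-1\},
\]
which is precisely the rank function of the rank-$(t-1)$ truncation of $M_{\cT}$.

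There is no real obstacle here; the main point to handle carefully is the rank computation when $A$ is $T_t(\cT)$-strong but not $\cT$-strong (i.e.\ $t-1 \le r_{M_{\cT}}(A) \le k-2$), where one must confirm that every $\cT$-small superset witnessing $r_{M_{\cT}}(A)$ has connectivity at least $t-1$ and therefore fails to be $T_t(\cT)$-small; this is exactly what the definition of $T_t(\cT)$ enforces. Once the rank functions agree, the two matroids coincide.
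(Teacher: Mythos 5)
Your argument is correct and complete; the paper itself states this lemma without proof, and your route --- computing $r_{M_{T_t(\cT)}}(A)=\min\{r_{M_{\cT}}(A),t-1\}$ directly from the rank formula for tangle matroids, after observing that the $T_t(\cT)$-small sets are exactly the $\cT$-small sets of connectivity at most $t-2$ --- is the natural one the authors evidently have in mind. One small correction: the rank formula you invoke is Lemma~\ref{tangle-matroid-properties}(i), not Lemma~\ref{basic-stuff}(i) (the latter only records the weak/strong dichotomy); with that citation fixed, both cases of your computation go through exactly as written.
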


Since truncations of uniform matroids are uniform, it follows from Lemma~\ref{truncate}
that, if $\cT$ has order $k$, then $\breadth(T_t(\cT))\geq \breadth(\cT)$ for any truncation
$T_t(\cT)$, but it is easily seen that the converse does not hold.

Via truncation, we have a suite of tangles associated with a given tangle. For each member of
this suite of order at least 4, we can find a $(0,1,4)$-connected matroid that preserves its breadth.
Can we do this simultaneously?

\begin{conjecture}
\label{sim-breadth}
Let $\cT$ be a tangle of order $k\geq 4$ in a matroid $M$. For each 
$i\in\{4,5,\ldots,k\}$, let $T_i(\cT)$ denote the truncation of $\cT$ to order $i$.
Then there is a $(0,1,4)$-connected minor $N$ of $M$ such that, for all
$i\in\{4,5,\ldots,k\}$, the tangle $T_i(\cT)$ generates a tangle $T'_i(\cT)$ in $N$.
Moreover, $\breadth_M(T_i(\cT))=\breadth_N(T'_i(\cT))$.
\end{conjecture}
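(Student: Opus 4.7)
The plan is to re-run the induction in the proof of Theorem~\ref{breadth-crit} under the additional requirement that every single-element reduction preserves the breadth of each truncation simultaneously. The base case is automatic: if $M$ is already $(0,1,4)$-connected, take $N=M$. Otherwise, I would seek an element $a$ and an $N_0 \in \{M\ba a, M/a\}$ in which $\cT$ generates $\cT_0$, with $\breadth(T_i(\cT))=\breadth(T_i(\cT_0))$ for every $i \in \{4,\ldots,k\}$; induction and Lemma~\ref{transitive} then assemble these single-element reductions into the desired minor $N$.

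The strategy rests on three compatibility facts between the whole tangle and its truncations, which I would establish first. First, by Lemma~\ref{truncate} one sees that $M_{T_i(\cT)}$ is the rank-$(i-1)$ truncation of $M_\cT$, and since $M_\cT$ is round (Corollary~\ref{round}) it has no coloops, so truncation commutes with deletion; moreover truncation preserves the freer-than relation, so whenever $\cT$ generates $\cT_0$ in $N_0$, the tangle $T_i(\cT)$ generates $T_i(\cT_0)$ and $M_{T_i(\cT)}\ba a$ is freer than $M_{T_i(\cT_0)}$. Second, Lemma~\ref{static} should carry over to the truncations: for $X \subseteq E(M)-F$ or $F - \{a\} \subseteq X$, taking minima with $i-1$ on both sides of the identity $r_{M_\cT \ba a}(X) = r_{M_{\cT_0}}(X)$ yields $r_{M_{T_i(\cT)} \ba a}(X)=r_{M_{T_i(\cT_0)}}(X)$. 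Third, free placement on a rank-$2$ flat is preserved under truncation, since every circuit of the truncation is either a circuit of $M_\cT$ (whose closure already contains $F$ by free placement in $M_\cT$) or an $(i-1)$-subset of an independent set whose closure is the whole ground set.

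With these three facts in hand, the key reductions used in Theorem~\ref{breadth-crit}, namely the removal of loops (Lemma~\ref{loops-away}), parallel-class elements in $\cT$-small $2$-separating sets (Lemma~\ref{2-tit-2}), and interior or guts elements of rank-$2$ flats (Lemmas~\ref{keep-int} and \ref{guts-away}), should all extend verbatim to each $T_i(\cT)$: the element $a$ prescribed by these lemmas depends only on $M$ and the fully closed set $F$ rather than on the order of the tangle, the swapping argument of Lemma~\ref{free-breadth}(ii) produces a witness of $\breadth(T_i(\cT))$ avoiding $a$, and the truncated form of Lemma~\ref{static} ensures this witness remains a uniform restriction in $M_{T_i(\cT_0)}$. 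The main obstacle will be the last step: when $\breadth(T_i(\cT))$ strictly exceeds $\breadth(\cT)$, a witness $U_i$ for $T_i(\cT)$ need not be a witness for $\cT$, and I must verify that each such $U_i$ meets $F$ in at most two elements that are freely placed on $F$ in $M_{T_i(\cT)}$. The hard part will be producing a single element $a$ in $\inter_M(F) \cup \guts_M(F)$ that lies outside every $U_i$ at once; one would presumably invoke the third compatibility fact together with the monotonicity $\breadth(\cT)=\breadth(T_k(\cT)) \le \breadth(T_{k-1}(\cT)) \le \cdots \le \breadth(T_4(\cT))$ and an extremal or averaging argument to locate such an $a$.
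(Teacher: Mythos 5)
First, be aware that the statement you are proving is stated in the paper as Conjecture~\ref{sim-breadth}; the authors give no proof and leave it open, so there is no argument of theirs to compare yours against — your proposal must stand on its own. As a plan, parts of it are sound: since $M_{T_i(\cT)}$ is the rank-$(i-1)$ truncation of $M_\cT$ (Lemma~\ref{truncate}), the rank-$2$ flats of all the $M_{T_i(\cT)}$ with $i\geq 4$ coincide with those of $M_\cT$, and the hypotheses of Lemma~\ref{tit-tangle} depend only on $M$, $F$ and $a$ once $t\leq i-2$, so a single choice of $a$ can be expected to yield generation of an order-$i$ tangle for every $i$ simultaneously. (Your stated justification for generation — that truncation commutes with deletion and preserves the freer-than relation — is not the right mechanism, since generation is a uniqueness assertion; the correct route is to reapply Lemma~\ref{tit-tangle} for each $i$ with the same $F$ and $a$.) Your second and third compatibility facts, on the truncated form of Lemma~\ref{static} and on free placement surviving truncation, do check out.

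The proposal is nonetheless not a proof, and the gap is exactly where you place it and then defer. The breadth-preservation steps in Lemmas~\ref{keep-int} and~\ref{guts-away} require an element $a$ of $F$ avoided by a witness (after swapping). With $k-3$ witnesses $U_4,\dots,U_k$, each meeting $F$ in up to two elements, the witnesses can jointly cover up to $2(k-3)$ elements of $F$, which for large $k$ can exceed $|\guts(F)|$ or $|\inter(F)|$; and the swapping move of Lemma~\ref{free-breadth}(ii) applies only to elements freely placed on $F$ in the relevant tangle matroid, which by Lemma~\ref{int-free} is guaranteed for interior elements but not for guts elements, precisely the elements Lemma~\ref{guts-away}(iii) deletes. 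No extremal or averaging argument is supplied to produce a common $a$, and it is not evident that one exists: a witness for some $T_i(\cT)$ could be forced through a non-freely-placed element of $F$ that the order-$j$ argument wants to remove. In addition, even granting a common $a$, the identities underpinning breadth preservation (such as $M_{T_i(\cT)}\ba a = M_{T_i(\cT_a)}$ in the guts case, or the circuit analysis following (\ref{keep-int-1})) are proved in the paper inside the rank-$(k-1)$ matroid $M_{\cT_a}$ and would have to be re-derived inside each rank-$(i-1)$ truncation. Until the simultaneous choice of $a$ is actually constructed, the statement remains, as the paper says, a conjecture.
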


It is shown in Section~\ref{back} that we cannot do better than weakly 4-connected as an outcome.
This is because of the requirement of preserving breadth. Given the results of \cite{carm},
one could expect to sacrifice a constrained amount of breadth to arrive at an internally
4-connected minor. The following conjecture may not be difficult.

\begin{conjecture}
\label{get-internal}
Let $\cT$ be a tangle of order $k\geq 4$ and breadth $m$ in a matroid $M$. Then $M$ has an 
internally 
$4$-connected minor $N$ with a tangle $\cT'$ of order $k$ such that 
$\cT$ generates $\cT'$ in $N$ and such that the breadth of $\cT'$ is at least $m/2$.
\end{conjecture}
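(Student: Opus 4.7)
The plan is to first apply Corollary~\ref{find-minor} to reduce to a weakly $4$-connected minor, and then eliminate the remaining $4$-element $3$-separators that obstruct internal $4$-connectivity, one at a time, losing at most half of the breadth in total.

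First I would apply Corollary~\ref{find-minor} to obtain a weakly $4$-connected minor $M_0$ of $M$ carrying a tangle $\cT_0$ of order $k$ that is generated by $\cT$ and satisfies $\breadth(\cT_0)=m$. The only obstruction to internal $4$-connectivity is a $3$-separator $F$ with $|F|=4$ and $|E(M_0)-F|\ge 4$; by taking its fully closed hull we may assume $F$ is fully closed in $M_0$. By Lemma~\ref{interior-decoration}(iii), each such $F$ is a $4$-element fan with $\guts(F)=\{a\}$, $\coguts(F)=\{d\}$, and $\inter(F)=\{b,c\}$. Since $F$ is $\cT_0$-small and fully closed, Corollary~\ref{full-closed} and Lemma~\ref{basic-stuff} show that $F$ is a rank-$2$ flat, and in fact a $4$-point line, of $M_{\cT_0}$. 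A standard uncrossing argument on fully closed $3$-separating sets, using submodularity of $\lm$ and weak $4$-connectivity, shows that distinct such $F$'s are pairwise disjoint in $M_0$.

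Fix a witness $U$ for $\breadth(\cT_0)$. Since $U$ has rank $k-1\ge 3$ in $M_{\cT_0}$, we have $|U\cap F|\le 2$ for every $4$-point line $F$. By Lemma~\ref{int-free}, the interior elements $b$ and $c$ are freely placed on $F$ in $M_{\cT_0}$, and so by the argument of Lemma~\ref{free-breadth}(ii) we may replace $U$ by an equivalent witness satisfying $U\cap F\subseteq\inter(F)=\{b,c\}$ for every such fan $F$. For each fan $F$, I would then contract the guts element $a$ in $M_0$: by Lemma~\ref{3-tit}, $F-\{a\}$ is titanic in $M_0/a$, so Lemma~\ref{tit-tangle} yields that $\cT_0$ generates an order-$k$ tangle in $M_0/a$. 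In $M_0/a$ the pair $\{b,c\}$ is parallel (since $\{a,b,c\}$ was a triangle), so we simplify by removing one of $b,c$, choosing the one not in $U$ when possible. After doing this for every fan in turn, using the pairwise disjointness to iterate, the resulting minor is internally $4$-connected and carries a tangle of order $k$ generated by $\cT$.

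For the breadth, the loss per fan $F$ is at most $\max(0,|U\cap F|-1)$: zero unless $|U\cap F|=2$, in which case we must remove exactly one of the two parallel witness elements, costing one. Writing $p_2$ for the number of fans $F$ with $|U\cap F|=2$, the disjointness of the fans gives $2p_2\le |U|=m$, so the total loss is at most $p_2\le m/2$ and $\breadth(\cT')\ge m/2$ as required. The hardest part will be verifying that sequentially performing these fan eliminations does not create new $4$-element $3$-separators in the intermediate minors, and that the chosen witness $U$ remains a witness in the intermediate tangle matroids; pairwise disjointness of the fans, together with control on ranks of sets not meeting the contracted element via Lemma~\ref{freer} and Lemma~\ref{static}, should make this possible, but the bookkeeping is the principal technical burden.
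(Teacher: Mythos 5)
The statement you are proving is Conjecture~\ref{get-internal}; the paper offers no proof of it (the authors only remark that it ``may not be difficult''), so there is nothing to compare against and your argument must stand on its own. It does not, for two concrete reasons.

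First, your case analysis of the obstructions is wrong. After reducing to a weakly $4$-connected $M_0$, you claim that every fully closed $4$-element $3$-separator $F$ is a $4$-element fan ``by Lemma~\ref{interior-decoration}(iii).'' That lemma has four outcomes, and outcome (iii) is only one of them: with $|F|=4$ the set $F$ can also be a $4$-point line (outcome (i)), a $4$-element cosegment (outcome (ii)), or a quad, i.e.\ a set that is both a circuit and a cocircuit, which falls under outcome (iv) since then $\guts(F)=\coguts(F)=\emptyset$ and $\inter(F)=F$. The quad case is not a corner case: it is exactly the configuration $\{a,b,c,d\}$ on which the paper's Section~9 example (Proposition~\ref{weak-stuck}) is built, and it is the configuration that forces breadth loss there. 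Your proof says nothing about how to destroy a quad, a $4$-point line, or a cosegment, nor how to account for the breadth lost in doing so; for these sets there is no guts element to contract and no interior pair to identify in parallel, so the entire mechanism of your second paragraph is unavailable.

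Second, even in the fan case the lemmas you cite do not apply. Write $F=\{a,b,c,d\}$ with $\guts(F)=\{a\}$, $\inter(F)=\{b,c\}$, $\coguts(F)=\{d\}$, so $\{a,b,c\}$ is a triangle and $a\in\cl(G)$. Contracting $a$ gives $\lm_{M_0/a}(F-\{a\})=\lm_{M_0}(F)-1=1$, so $M_0/a$ is not $3$-connected and the hypothesis $\lm_{M/a}(F-\{a\})=\lm_M(F)$ of Lemma~\ref{tit-tangle} fails; Lemma~\ref{3-tit} is also inapplicable because it concerns exactly $3$-separating sets in $3$-connected matroids, and in any case a $3$-element exactly $3$-separating set is \emph{not} titanic by that very lemma. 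The same obstruction arises for the alternatives (deleting the guts or contracting the coguts): the surviving $3$-element set is never titanic, so none of the paper's tools (Lemma~\ref{tit-tangle}, Lemma~\ref{keep-int} with its hypothesis $|F|\geq 5$, Lemma~\ref{guts-away} with $|\guts(F)|\geq 3$) certify that $\cT_0$ generates a tangle in the smaller matroid, let alone that $U$ survives as a witness. One would have to argue via Corollary~\ref{selection} (solidity suffices there) and then verify by hand that no three determined sets cover the ground set, and separately control the rank in the new tangle matroid of subsets of $U$ meeting $F$ in one element --- precisely the ``bookkeeping'' you defer. Finally, your disjointness claim is too strong: uncrossing two $4$-element $3$-separators only yields a contradiction to weak $4$-connectivity when they share at least two elements, so distinct obstructions may share a point. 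The breadth arithmetic in your last paragraph is fine as far as it goes, but it rests on all of the unestablished steps above.
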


\subsection*{Acknowledgement} We thank Jim Geelen for pointing out the connection with 
$4$-connected sets. We also thank an anonymous referee for their careful reading of the manuscript.
This referee showed a mastery of coding theory in both detecting and often correcting a 
number of inexcusable errors.

\end{document}